\definecolor{red}{rgb}{0,0,0}
\tikzstyle{vertex}=[circle, draw, inner sep=0pt, minimum size=6pt]
\newcommand{\vertex}{\node[vertex]}
\newtheorem{theorem}{Theorem}[section]
\newtheorem{lemma}[theorem]{Lemma}
\newtheorem{corollary}[theorem]{Corollary}
\newtheorem{hypothesis}[theorem]{Hypothesis}
\numberwithin{equation}{section}
\theoremstyle{definition}
\newtheorem{definition}[theorem]{Definition}
\newtheorem{notation}[theorem]{Notation}
\theoremstyle{remark}
\newcommand{\bFp}{\mathbb{F}_p}
\newcommand{\AG}{\mathrm{AG}}
\newcommand{\PG}{\mathrm{PG}}
\newcommand{\GF}{\mathrm{GF}}
\newcommand{\F}{\mathbb{F}}
\newcommand{\AGE}{\mathrm{AG}(2,3)\backslash e}
\newcommand{\OVM}{\overline{\mathcal{M}(\Phi)}}
\DeclareMathOperator{\YT}{YT}
\renewcommand*\env@matrix[1][r]{\hskip -\arraycolsep
  \let\@ifnextchar\new@ifnextchar
  \array{*\c@MaxMatrixCols #1}}
\begin{document}
\title[Highly connected ternary matroids]{On the Highly Connected Dyadic, Near-Regular, and Sixth-Root-of-Unity Matroids}

\author{Ben Clark}
\address{Department of Mathematics\\
Louisiana State University\\
Baton Rouge, Louisiana, USA}
\email{clarkbenj@myvuw.ac.nz}

\author{Kevin Grace}
\address{School of Mathematics, University of Bristol, Bristol, UK, and the Heilbronn Institute for Mathematical Research, Bristol, UK \newline Current Address: Department of Mathematics, Vanderbilt University,  Nashville, Tennessee, USA}
\email{kevin.m.grace@vanderbilt.edu}

\author{James Oxley}
\address{Department of Mathematics\\
Louisiana State University\\
Baton Rouge, Louisiana, USA}
\email{oxley@math.lsu.edu}

\author{Stefan H.M. van Zwam}
\address{Department of Mathematics\\
Louisiana State University\\
Baton Rouge, Louisiana, USA}
\email{stefanvanzwam@gmail.com}

\thanks{The second and fourth authors were supported in part by National Science Foundation grant 1500343.}

\subjclass{05B35}
\date{\today}

\begin{abstract}
Subject to announced results by Geelen, Gerards, and Whittle, we completely characterize the highly connected members of the classes of dyadic, near-regular, and sixth-root-of-unity matroids.
\end{abstract}

\maketitle

\section{Introduction}
\label{sec:Introduction}
We give the definitions of the classes of dyadic, signed-graphic, near-regular, and $\sqrt[6]{1}$-matroids in Section \ref{sec:Preliminaries}; however, unexplained notation and terminology in this paper will generally follow Oxley \cite{o11}. One exception is that we denote the vector matroid of a matrix $A$ by $M(A)$ rather than $M[A]$. A matroid $M$ is \emph{vertically $k$-connected} if, for every set $X\subseteq E(M)$ with $r(X)+r(E-X)-r(M)<k$, either $X$ or $E-X$ is spanning. If $M$ is vertically $k$-connected, then its dual $M^*$ is \textit{cyclically $k$-connected}. The matroids $\Pi_r$, $\Sigma_r$, and $\Omega_r$ are obtained from $M(K_{r+1})$ by adding three specific points to a flat of rank $4$, $3$, and $5$, respectively; we give the precise definitions in Section \ref{sec:dyadic}.

Due to the technical nature of Hypotheses \ref{hyp:connected-template} and \ref{hyp:clique-template}, we delay their statements  to Section \ref{sec:Frame Templates}. Subject to these hypotheses, we characterize the highly connected dyadic matroids by proving the following.

\begin{theorem}
\label{thm:dyadic}
Suppose Hypothesis \ref{hyp:connected-template} holds. Then there exists $k\in\mathbb{Z}_+$ such that, if $M$ is a $k$-connected dyadic matroid with at least $2k$ elements, then one of the following holds\textcolor{red}{:}
\begin{enumerate}
\item Either $M$ or $M^*$ is a signed-graphic matroid.
\item Either $M$ or $M^*$ is a matroid of rank $r$ that is a restriction of $\Pi_r$, $\Sigma_r$, or $\Omega_r$.
\end{enumerate}
Moreover, suppose Hypothesis \ref{hyp:clique-template} holds. There exist $k,n\in\mathbb{Z}_+$ such that, if $M$ is a simple, vertically $k$-connected, dyadic matroid with an $M(K_n)$-minor, then either $M$ is a signed-graphic matroid or $M$ is a restriction of $\Pi_{r(M)}$, $\Sigma_{r(M)}$, or $\Omega_{r(M)}$.
\end{theorem}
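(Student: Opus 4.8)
The plan is to combine the structure theory of highly connected $\GF(3)$-representable matroids --- which is what Hypotheses~\ref{hyp:connected-template} and~\ref{hyp:clique-template} package --- with an explicit classification of the \emph{dyadic frame templates}, meaning those $\GF(3)$-frame templates all of whose sufficiently connected matroids are dyadic. Since the class of dyadic matroids is minor-closed, closed under duality, and contained in the $\GF(3)$-representable matroids, Hypothesis~\ref{hyp:connected-template} should supply a constant $k$ and a finite list of dyadic frame templates so that every $k$-connected dyadic matroid with at least $2k$ elements is described by one of them or is the dual of a matroid described by one of them. The theorem then reduces to identifying the templates on the list and translating each into the families named in the statement.

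The crux, and what I expect to be the main obstacle, is the classification. I would use the reduction operations on frame templates and the normal-form results of Section~\ref{sec:Frame Templates} to replace an arbitrary dyadic template by a ``minimal'' one of bounded size, and then eliminate every minimal template not on a short explicit list by exhibiting, inside one of its matroids, a non-dyadic minor (such as $\AG(2,3)$, $T_8$, or another excluded minor for the dyadic matroids). The survivors are the template $\Phi_{\mathrm{SG}}$ whose sufficiently connected matroids are exactly the signed-graphic ones, together with three templates $\Phi_\Pi$, $\Phi_\Sigma$, $\Phi_\Omega$ obtained by adjoining to a graphic frame template the three special columns used in Section~\ref{sec:dyadic} to build $\Pi_r$, $\Sigma_r$, and $\Omega_r$. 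A companion computation in Section~\ref{sec:dyadic} checks that $\Pi_r$, $\Sigma_r$, $\Omega_r$, and hence all of their restrictions, are dyadic, so the list is exactly right. Controlling the finitely many template entries and the associated group, and certifying the correct excluded minor in each borderline case, is where essentially all of the work lies.

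Granting the classification, parts~(1) and~(2) are immediate. If $M$ is $k$-connected and dyadic with at least $2k$ elements, then $M$ or $M^*$ is described by one of $\Phi_{\mathrm{SG}}$, $\Phi_\Pi$, $\Phi_\Sigma$, $\Phi_\Omega$. Being described by $\Phi_{\mathrm{SG}}$ forces the matroid (once the bounded perturbation permitted by the template is absorbed by its connectivity) to be signed-graphic, giving~(1); being described by $\Phi_\Pi$, $\Phi_\Sigma$, or $\Phi_\Omega$ forces it to be a spanning restriction of $\Pi_r$, $\Sigma_r$, or $\Omega_r$ for $r = r(M)$, since the named matroid is the largest one of its rank described by the corresponding template, giving~(2). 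The ``$M^*$'' alternatives record the case in which it is $M^*$, not $M$, that the template describes.

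For the ``moreover'' statement I would rerun the argument with Hypothesis~\ref{hyp:clique-template} in place of Hypothesis~\ref{hyp:connected-template}: for $k$ and $n$ large enough, a simple, vertically $k$-connected dyadic matroid $M$ with an $M(K_n)$-minor is described --- not merely co-described --- by one of the dyadic templates, since a matroid whose dual is described by such a template is ``close to cographic'' and hence has no $M(K_n)$-minor once $n$ passes a threshold depending only on the template. Simplicity and vertical $k$-connectivity then upgrade ``$M$ is described by $\Phi_\Pi$'' (respectively $\Phi_\Sigma$, $\Phi_\Omega$, $\Phi_{\mathrm{SG}}$) to ``$M$ is a restriction of $\Pi_{r(M)}$'' (respectively $\Sigma_{r(M)}$, $\Omega_{r(M)}$, ``$M$ is signed-graphic''). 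Beyond the template classification, the remaining care is bookkeeping: reconciling vertical versus ordinary connectivity and the element-count and clique-minor hypotheses with the precise forms of Hypotheses~\ref{hyp:connected-template} and~\ref{hyp:clique-template}, and confirming that a simple, highly connected matroid described by one of the templates with extra columns is genuinely a restriction of the relevant universal matroid rather than of some proper minor-closed subfamily.
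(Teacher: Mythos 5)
Your proposal follows essentially the same route as the paper: invoke Hypothesis \ref{hyp:connected-template} (resp.\ \ref{hyp:clique-template}) to reduce to a finite list of refined frame templates, classify the ternary templates avoiding an excluded minor for the dyadic class (the paper uses $\AG(2,3)\backslash e$ specifically) into the signed-graphic template plus the three $Y$-templates $\YT(T_1)$, $\YT(T_2)$, $\YT(T_3)$, and then upgrade ``minor of $\Pi_r$, $\Sigma_r$, $\Omega_r$'' to ``restriction of $\Pi_{r(M)}$, $\Sigma_{r(M)}$, $\Omega_{r(M)}$.'' The substantive work you defer --- the template classification --- is exactly where the paper's effort lies (the forbidden-submatrix analysis of Tables \ref{tab:forbidden}--\ref{tab:forbidden2} and Lemmas \ref{lem:Phi2-or-Y}--\ref{lem:YT-analysis}), and your outline of it matches what is done there.
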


Let $\mathrm{AG}(2,3)\backslash e$ be the matroid resulting from $\mathrm{AG}(2,3)$ by deleting one point; this matroid is unique up to isomorphism. We prove the following excluded-minor characterization of the highly connected dyadic matroids.

\begin{theorem}
\label{thm:dyadic-exc-minor}
Suppose Hypothesis \ref{hyp:connected-template} holds. There exists $k\in\mathbb{Z}_+$ such that, for a $k$-connected matroid $M$ with at least $2k$ elements, $M$ is dyadic if and only if $M$ is a ternary matroid with no minor isomorphic to either $\mathrm{AG}(2,3)\backslash e$ or $(\mathrm{AG}(2,3)\backslash e)^*$. Moreover, suppose Hypothesis \ref{hyp:clique-template} holds. There exist $k,n\in\mathbb{Z}_+$ such that, for a vertically $k$-connected matroid $M$ with an $M(K_n)$-minor, $M$ is dyadic if and only if $M$ is a ternary matroid with no minor isomorphic to $\mathrm{AG}(2,3)\backslash e$.
\end{theorem}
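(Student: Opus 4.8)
The plan is to prove the two implications separately, with essentially all of the work in the converse. The forward implication needs no connectivity hypothesis at all: every dyadic matroid is representable over $\mathrm{GF}(3)$ and hence ternary, and the class of dyadic matroids is closed under both minors and duality, so it is enough to recall that $\mathrm{AG}(2,3)\backslash e$ is not dyadic (see \cite{o11}), which in turn forces $(\mathrm{AG}(2,3)\backslash e)^*$ to be non-dyadic; thus no dyadic matroid has a minor isomorphic to either of these.

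For the converse, write $\mathcal{D}'$ for the class of ternary matroids having no minor isomorphic to $\mathrm{AG}(2,3)\backslash e$ or $(\mathrm{AG}(2,3)\backslash e)^*$. This is a \emph{proper} minor-closed subclass of the $\mathrm{GF}(3)$-representable matroids: for instance $\mathrm{PG}(2,3)$ contains $\mathrm{AG}(2,3)$, and so also $\mathrm{AG}(2,3)\backslash e$, as a restriction. Hence, by the announced structure theorem of Geelen, Gerards, and Whittle together with Hypothesis \ref{hyp:connected-template}, there is an integer $k_0$ and a finite list of ternary frame templates so that, for $k\ge k_0$, every $k$-connected member of $\mathcal{D}'$ with at least $2k$ elements either conforms to one of the templates or its dual does. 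Since the dyadic matroids form a duality-closed subclass of $\mathcal{D}'$, the converse reduces to the claim that, for $k$ large enough, every $k$-connected matroid with at least $2k$ elements that conforms to one of these templates is dyadic. Granting this, the matroid $M$ in the statement, or its dual, conforms to one of the templates and is therefore dyadic, and taking $k$ to be the maximum of $k_0$ and the threshold in the claim finishes the converse. I expect the claim to be obtained in tandem with the template analysis behind Theorem \ref{thm:dyadic}: one isolates the ``dyadic'' ternary templates --- those whose highly connected conformers are signed-graphic or are restrictions of $\Pi_r$, $\Sigma_r$, or $\Omega_r$, and so are dyadic --- and argues that no other template can arise for $\mathcal{D}'$.

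The crux is to rule out a ternary template that avoids $\mathrm{AG}(2,3)\backslash e$ and its dual among all of its highly connected conformers but nonetheless has a conformer that is not dyadic; equivalently, to show that a sufficiently connected ternary matroid with sufficiently many elements that is not dyadic must have a minor isomorphic to $\mathrm{AG}(2,3)\backslash e$ or to $(\mathrm{AG}(2,3)\backslash e)^*$. Passing from such a conformer $N$ to a minor-minimal non-dyadic minor of $N$ --- which is again ternary --- one is done unless that minor is one of the ``small'' ternary non-dyadic matroids distinct from $\mathrm{AG}(2,3)\backslash e$ and its dual, the prototype being $T_8$, which is ternary and non-dyadic yet, being an $8$-element rank-$4$ matroid, has neither $\mathrm{AG}(2,3)\backslash e$ nor $(\mathrm{AG}(2,3)\backslash e)^*$ (both $8$-element matroids) as a minor. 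So the real task --- and the step I expect to be the main obstacle --- is to show that a $T_8$-minor (or a minor isomorphic to one of these other small obstructions) sitting inside a highly connected ternary matroid with many elements can always be grown, within that matroid, to a minor isomorphic to $\mathrm{AG}(2,3)\backslash e$ or, dually, to $(\mathrm{AG}(2,3)\backslash e)^*$. This is precisely where the template structure, rather than elementary matroid surgery, does the work: one would determine which ternary templates admit a $T_8$-minor among their conformers and verify that each of those already admits an $\mathrm{AG}(2,3)\backslash e$-minor or its dual.

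The $M(K_n)$-minor version is handled the same way, with Hypothesis \ref{hyp:clique-template} in place of Hypothesis \ref{hyp:connected-template}: a simple, vertically $k$-connected ternary matroid with an $M(K_n)$-minor and no minor isomorphic to $\mathrm{AG}(2,3)\backslash e$ conforms to one of the relevant templates --- and here no dual templates occur, since an $M(K_n)$-minor is not preserved under duality and forces the ``primal'' side of the structure theory --- so the same template-by-template analysis shows the matroid is dyadic. This is also why only $\mathrm{AG}(2,3)\backslash e$, and not its dual, is needed on this half of the statement.
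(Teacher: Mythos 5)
Your proposal is correct and follows essentially the same route as the paper: the paper packages your converse argument as Lemma \ref{lem:connected-excluded-minors} (in effect, Hypothesis \ref{hyp:connected-template} applied to the minor-closed class of ternary matroids with no $\AGE$- or $(\AGE)^*$-minor) and supplies the key claim --- that every refined ternary template avoiding $\AGE$ has only signed-graphic conformers or conformers that are restrictions of $\Pi_r$, $\Sigma_r$, or $\Omega_r$, hence only dyadic ones --- as Lemma \ref{lem:YT-analysis}, which is exactly the template analysis from Theorem \ref{thm:dyadic} that you defer to. The one caution concerns your $T_8$ digression: the detour through minor-minimal non-dyadic matroids is best avoided (the complete list of excluded minors for the dyadic class is not known) and is also unnecessary, since Lemma \ref{lem:YT-analysis} is already stated in the contrapositive form your argument actually needs.
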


Our final main result characterizes the highly connected near-regular and $\sqrt[6]{1}$-matroids. The matroid $T_r^1$ is obtained from the complete graphic matroid $M(K_{r+2})$ by adding a point freely to a triangle, contracting that point, and simplifying. We denote the non-Fano matroid by $F_7^-$.

\begin{theorem}
\label{thm:near-reg-sixth-root}
Suppose Hypothesis \ref{hyp:connected-template} holds. There exists $k\in\mathbb{Z}_+$ such that, if $M$ is a $k$-connected matroid with at least $2k$ elements, the following are equivalent\textcolor{red}{:}
\begin{enumerate}
\item $M$ is a near-regular matroid,
\item $M$ is a $\sqrt[6]{1}$-matroid,
\item $M$ or $M^*$ is a matroid of rank $r$ that is a restriction of $T_r^1$, and
\item $M$ is a ternary matroid that has no minor isomorphic to $F_7^-$ or $(F_7^-)^*$.
\end{enumerate}
Moreover, suppose Hypothesis \ref{hyp:clique-template} holds. There exist $k,n\in\mathbb{Z}_+$ such that, if $M$ is a simple, vertically $k$-connected matroid with an $M(K_n)$-minor, then (1) and (2) are equivalent to each other and also to the following conditions\textcolor{red}{:}
\begin{enumerate}
\item[(3')] $M$ is a restriction of $T_{r(M)}^1$, and
\item[(4')] $M$ is a ternary matroid that has no minor isomorphic to $F_7^-$.
\end{enumerate}
\end{theorem}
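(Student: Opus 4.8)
The plan is to prove the conditions $(1)$--$(4)$ equivalent by establishing the cycle $(1)\Rightarrow(2)\Rightarrow(4)\Rightarrow(3)\Rightarrow(1)$, with essentially all of the work concentrated in $(4)\Rightarrow(3)$, and then to deduce the statement about $(3')$ and $(4')$ by running the same argument with Hypothesis~\ref{hyp:clique-template} in place of Hypothesis~\ref{hyp:connected-template}. Two of the three implications in the cycle need no connectivity hypothesis at all. For $(1)\Rightarrow(2)$, there is a partial-field homomorphism from the near-regular partial field to the sixth-root-of-unity partial field, so every near-regular matroid is a $\sqrt[6]{1}$-matroid. For $(2)\Rightarrow(4)$, the $\sqrt[6]{1}$-matroids are ternary by definition and minor-closed, and neither $F_7^-$ nor $(F_7^-)^*$ is $\GF(4)$-representable, so no $\sqrt[6]{1}$-matroid has a minor isomorphic to either. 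For $(3)\Rightarrow(1)$ we first check that $T_r^1$ is near-regular: this follows either from an explicit near-regular representation, or from the observation that $T_r^1$ is obtained from the regular matroid $M(K_{r+2})$ by adding a point freely to a line, contracting it, and simplifying, operations that preserve near-regularity; since the near-regular matroids form a minor-closed, dual-closed class, every restriction of $T_r^1$ or of $(T_r^1)^*$ is near-regular.

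The substance is $(4)\Rightarrow(3)$. Since every near-regular matroid is dyadic, the plan is to first show that $M$ is dyadic and then invoke Theorem~\ref{thm:dyadic}. To show $M$ is dyadic it suffices, by Theorem~\ref{thm:dyadic-exc-minor}, to show that for $k$ large a $k$-connected ternary matroid with at least $2k$ elements and no $F_7^-$- or $(F_7^-)^*$-minor has no $\AGE$- or $(\AGE)^*$-minor. This is exactly where high connectivity is essential: $\AGE$ itself has neither an $F_7^-$- nor an $(F_7^-)^*$-minor, so the implication cannot be extracted minor by minor and must come from the structure theory. Concretely, the plan is to apply Hypothesis~\ref{hyp:connected-template} to the (proper, minor-closed) class of ternary matroids with no $F_7^-$- or $(F_7^-)^*$-minor and to verify, template by template, that the sufficiently highly connected frame matroids they describe have no $\AGE$-minor; equivalently, that a sufficiently highly connected ternary matroid with an $\AGE$-minor does have an $F_7^-$- or $(F_7^-)^*$-minor. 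Granting this, Theorem~\ref{thm:dyadic} applies, so either $M$ or $M^*$ is signed-graphic, or $M$ or $M^*$ is a restriction of $\Pi_{r(M)}$, $\Sigma_{r(M)}$, or $\Omega_{r(M)}$.

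It then remains to identify which of these matroids have no $F_7^-$- or $(F_7^-)^*$-minor. In the signed-graphic case the plan is to classify the sufficiently highly connected signed-graphic matroids avoiding both of these minors and to check that they are precisely the restrictions, up to duality, of $T_r^1$ — the balanced part gives a graphic matroid and the only admissible unbalanced feature is exactly the ``freely added point then contracted'' gadget that distinguishes $T_r^1$. In the remaining case the plan is to show that once $r$ is large each of $\Pi_r$, $\Sigma_r$, and $\Omega_r$ contains an $F_7^-$- or $(F_7^-)^*$-minor, so that any $F_7^-$- and $(F_7^-)^*$-free restriction of one of them must omit enough of the distinguished rank-$4$, rank-$3$, or rank-$5$ flat (and in particular at least one of the three added points) to be forced to be a restriction of $T_r^1$. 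Combining the two cases yields that $M$ or $M^*$ is a restriction of $T_{r(M)}^1$, which is $(3)$. The clique-template version is identical, using the ``moreover'' parts of Theorems~\ref{thm:dyadic} and~\ref{thm:dyadic-exc-minor} and noting that the $M(K_n)$-minor removes the need for the dual alternative, so that one obtains $(3')$ and $(4')$.

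I expect the main obstacle to be the lemma that high connectivity upgrades ``no $F_7^-$- or $(F_7^-)^*$-minor'' to ``dyadic'' for ternary matroids, i.e. the statement ruling out $\AGE$-minors: because this is false for small matroids it cannot be proved one excluded minor at a time, and genuinely requires the template machinery of Hypothesis~\ref{hyp:connected-template} together with a careful analysis of which frame templates produce which small, non-near-regular minors. A secondary difficulty, more computational than conceptual, is the explicit identification of the $F_7^-$- and $(F_7^-)^*$-free restrictions of the highly connected signed-graphic matroids and of $\Pi_r$, $\Sigma_r$, $\Omega_r$ with the restrictions of $T_r^1$.
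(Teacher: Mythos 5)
Your cycle $(1)\Rightarrow(2)\Rightarrow(4)\Rightarrow(3)\Rightarrow(1)$ is logically admissible, and the easy implications are handled correctly, but the route you choose for $(4)\Rightarrow(3)$ --- through dyadicity, Theorem~\ref{thm:dyadic-exc-minor}, and Theorem~\ref{thm:dyadic} --- is not the paper's, and as written it contains a genuine gap precisely at its pivotal step. You apply Hypothesis~\ref{hyp:connected-template} to the class of ternary matroids with no $F_7^-$- or $(F_7^-)^*$-minor and then promise to ``verify, template by template'' that the resulting templates produce no $\AGE$-minor; but no mechanism for this verification is given, and the only available one is to determine the structure of a refined template $\Phi$ with $F_7^-\notin\OVM$. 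That determination (Lemmas~\ref{lem:check-min6} and~\ref{lem:reduce-to-Y-template} force $\Phi$ to be a $Y$-template, and an analysis of the matrix $P$ as in Lemma~\ref{lem:Y-template-nonFano} forces every member of $\mathcal{M}(\YT(P))$ to simplify to a restriction of $T_r^1$) is exactly the paper's key new lemma, and once you have it you have conclusion $(3)$ immediately. So the detour through the dyadic theorems is not merely inefficient: its hardest step secretly contains the direct proof, and everything you build on top of it is redundant.

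That superstructure is itself a second source of unproven claims. After invoking Theorem~\ref{thm:dyadic} you must classify (a) the highly connected signed-graphic matroids with no $F_7^-$- or $(F_7^-)^*$-minor and (b) the $F_7^-$-free restrictions of $\Pi_r$, $\Sigma_r$, and $\Omega_r$, and show both reduce to restrictions of $T_r^1$ up to duality. Neither is done in the paper and neither is routine: note that $T_r^1$ is itself signed-graphic (its non-clique columns in Matrix~(\ref{equ:Tr1}) have only two nonzero entries), so (a) is a genuine signed-graph classification problem, and your one-sentence description of its answer is an assertion, not an argument. The paper avoids all of this by working with the $\sqrt[6]{1}$ class directly: it proves $(1)\Rightarrow(2)\Rightarrow(3)\Rightarrow(1)$, obtaining $(2)\Rightarrow(3)$ from Theorem~\ref{thm:Y1spanning} together with Lemmas~\ref{lem:check-min6}, \ref{lem:reduce-to-Y-template}, \ref{lem:comp-lift}, and~\ref{lem:Y-template-nonFano}, and then gets $(2)\Leftrightarrow(4)$ in one stroke from the excluded-minor machinery of Lemma~\ref{lem:connected-excluded-minors} with $\mathcal{E}_1=\{F_7^-\}$ and $\mathcal{E}_2=\{(F_7^-)^*\}$. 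You should restructure your argument along those lines; the dyadic theorems are not needed here.
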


Theorem \ref{thm:near-reg-sixth-root} leads to the following result.

\begin{corollary}
\label{cor:GF3-and-other}
Suppose Hypothesis \ref{hyp:connected-template} holds. There exists $k\in\mathbb{Z}_+$ such that, if $M$ is a ternary $k$-connected matroid with at least $2k$ elements, then $M$ is representable over some field of characteristic other than $3$ if and only if $M$ is dyadic. Moreover, suppose Hypothesis \ref{hyp:clique-template} holds. There exist $k,n\in\mathbb{Z}_+$ such that, if $M$ is a simple, ternary, vertically $k$-connected matroid with an $M(K_n)$-minor, then $M$ is representable over some field of characteristic other than $3$ if and only if $M$ is dyadic.
\end{corollary}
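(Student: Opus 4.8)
The plan is to deduce the corollary from Theorem~\ref{thm:near-reg-sixth-root} together with Whittle's classification of the $3$-connected ternary matroids that are representable over a field other than $\mathrm{GF}(3)$. Take $k$, and (for the second statement) $n$, at least $3$ and at least as large as every constant supplied by Theorem~\ref{thm:near-reg-sixth-root}; then under either hypothesis $M$ is $3$-connected. (In the clique case $M$ is simple and vertically $3$-connected, hence $3$-connected: for a separation $(X,Y)$ of low order, vertical $3$-connectedness makes one side, say $Y$, spanning, and then $r(X)=r(X)+r(Y)-r(M)\le 1$, so $|X|\le 1$, which is impossible for a genuine $1$- or $2$-separation.) One implication needs no connectivity: if $M$ is dyadic, then $M$ is representable over the dyadic partial field, hence over $\mathbb{Q}$, a field of characteristic $0\neq 3$.

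For the converse, suppose $M$ satisfies the relevant connectivity hypothesis, is ternary, and is representable over a field $\mathbb{F}$ of characteristic different from $3$. Then $M$ is a $3$-connected ternary matroid representable over a field other than $\mathrm{GF}(3)$, so by Whittle's theorem $M$ is near-regular, dyadic, or a $\sqrt[6]{1}$-matroid. If $M$ is dyadic we are done. If $M$ is near-regular, then $M$ is dyadic, because there is a homomorphism from the near-regular partial field to the dyadic partial field (for example sending the parameter $\alpha$ to $2$, so that $1-\alpha\mapsto -1$), and hence every near-regular matroid is dyadic; so we are again done. Finally, suppose $M$ is a $\sqrt[6]{1}$-matroid. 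Then $M$ satisfies all the hypotheses of Theorem~\ref{thm:near-reg-sixth-root} (respectively of its clique version), and condition~(2) of that theorem holds for $M$; hence condition~(1) holds, that is, $M$ is near-regular, and so $M$ is dyadic by the previous case. Thus $M$ is dyadic in every case, which is what we wanted.

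The step I expect to be the main obstacle is the correct invocation of Whittle's theorem: one must confirm that his analysis of the $3$-connected ternary matroids representable over some other field yields precisely the three classes used above (apart from the ternary matroids not representable over any field of characteristic other than $3$, which do not occur here) and that the connectivity hypotheses of the corollary place $M$ in the common range of applicability of that theorem and of Theorem~\ref{thm:near-reg-sixth-root}. As a check, the characteristic-$2$ case can be settled directly, without Whittle: the non-Fano matroid $F_7^-$ and its dual are representable only over fields of characteristic other than $2$, so when $\mathbb{F}$ has characteristic $2$ the matroid $M$ has no minor isomorphic to $F_7^-$ or $(F_7^-)^*$, and Theorem~\ref{thm:near-reg-sixth-root} (condition~(4), or~(4') in the clique version, implying condition~(1)) shows at once that $M$ is near-regular, hence dyadic. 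No such shortcut is available in characteristics $0$ or $p\ge 5$, since matroids such as $\AGE$ are themselves ternary and are representable over fields of those characteristics; it is precisely the high connectivity, channelled through Whittle's dichotomy and Theorem~\ref{thm:near-reg-sixth-root}, that nonetheless forces $M$ to be dyadic.
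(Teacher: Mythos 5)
Your proposal is correct and follows essentially the same route as the paper: one direction is immediate since dyadic matroids are representable over $\mathbb{Q}$, and the other combines Whittle's theorem on $3$-connected ternary matroids representable over a field of characteristic other than $3$ with Theorem \ref{thm:near-reg-sixth-root} (a $\sqrt[6]{1}$-matroid of the stated connectivity is near-regular, hence dyadic), together with the observation that a simple vertically $3$-connected matroid is $3$-connected. The only quibble is a slip of wording where you describe $M$ as ``representable over a field other than $\mathrm{GF}(3)$'' when invoking Whittle; the relevant hypothesis, which you do have, is representability over a field of characteristic other than $3$.
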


\begin{proof}
Whittle \cite[Theorem 5.1]{w97} showed that a $3$-connected ternary matroid that is representable over some field of characteristic other than $3$ is either a dyadic matroid or a $\sqrt[6]{1}$-matroid. Therefore, since near-regular matroids are dyadic, Theorem \ref{thm:near-reg-sixth-root} immediately implies the first statement in the corollary. The second statement is proved similarly but also requires the fact that a simple, vertically $3$-connected matroid is $3$-connected.
\end{proof}

Hypotheses \ref{hyp:connected-template} and \ref{hyp:clique-template} are believed to be true, but their proofs are still forthcoming in future papers by Geelen, Gerards, and Whittle. They are modified versions of a hypothesis given by Geelen, Gerards, and Whittle in \cite{ggw15}. The results announced in \cite{ggw15} rely on the Matroid Structure Theorem by these same authors \cite{ggw06}.  We refer the reader to \cite{gvz18} for more details.

Some proofs in this paper involved case checks aided by Version 8.9 of the SageMath software system \cite{sage}, in particular making use of the \emph{matroids} component \cite{sage-matroid}. The authors used the CoCalc (formerly SageMathCloud) online interface.

In Section \ref{sec:Preliminaries}, we give some background information about the classes of matroids studied in this paper. In Section \ref{sec:Frame Templates}, we recall results from \cite{g-submitted} that will be used to prove our main results. In Section \ref{sec:dyadic}, we prove Theorems \ref{thm:dyadic} and \ref{thm:dyadic-exc-minor}, and in Section \ref{sec:6sqrt}, we prove Theorem \ref{thm:near-reg-sixth-root}.

\section{Preliminaries}
\label{sec:Preliminaries}
We begin this section by clarifying some notation and terminology that will be used throughout the rest of the paper. Let $D_r$ be the $r\times\binom{r}{2}$ matrix such that each column is distinct and such that every column has exactly two nonzero entries---the first a $1$ and the second $-1$. For a field $\F$, we denote by $\bFp$ the prime subfield of $\F$. If $\mathcal{M}$ is a class of matroids, we will denote by $\overline{\mathcal{M}}$ the closure of $\mathcal{M}$ under the taking of minors. If $v$ is a vector in the vector space $\mathbb{F}^S$ and $S'$ is a subset of $S$, let $v|S'$ denote the projection of $v$ into $\mathbb{F}^{S'}$. The \emph{support} of $v$ is the subset $S'$ of $S$ such that all entries of $v|S'$ are nonzero and such that every entry of $v|(S-S')$ is $0$. The \emph{weight} of a column or row vector of a matrix is its number of nonzero entries; that is, the weight of a vector is the size of its support. If $A$ is an $m\times n$ matrix and $n'\leq n$, then we call an $m\times n'$ submatrix of $A$ a \emph{column submatrix} of $A$. All empty matrices (having $0$ rows or $0$ columns) are denoted by $[\emptyset]$. In the remainder of this section, we give some background information about the classes of matroids studied in this paper. 

The class of \emph{dyadic} matroids consists of those matroids representable by a matrix over $\mathbb{Q}$ such that every nonzero subdeterminant is $\pm2^i$ for some $i\in\mathbb{Z}$. The class of \emph{sixth-root-of-unity} matroids (or $\sqrt[6]{1}$-matroids) consists of those matroids that are representable by a matrix over $\mathbb{C}$ such that every nonzero subdeterminant is a complex sixth root of unity. Let $\mathbb{Q}(\alpha)$ be the field obtained by extending the rationals $\mathbb{Q}$ by a transcendental $\alpha$. A matroid is \emph{near-regular} if it can be represented by a matrix over $\mathbb{Q}(\alpha)$ such that every nonzero subdeterminant is contained in the set $\{\pm\alpha^i(\alpha-1)^j:i,j\in\mathbb{Z}\}$.

A matroid is \emph{signed-graphic} if it can be represented by a matrix over $\GF(3)$ each of whose columns has at most two nonzero entries. The rows and columns of this matrix can be indexed by the set of vertices and edges, respectively, of a \emph{signed graph}. If the nonzero entries of the column are unequal, then the corresponding edge is a positive edge joining the vertices indexing the rows containing the nonzero entries. If the column has two equal entries, the edge is negative. If the column contains only one nonzero entry, then the corresponding edge is a negative loop at the vertex indexing the row containing the nonzero entry. Every signed-graphic matroid is dyadic. (See, for example, \cite[Lemma 8A.3]{z82}).

Whittle \cite[Theorem 1.4]{w97} showed that the following statements are equivalent for a matroid $M$:
\begin{itemize}
\item $M$ is near-regular
\item $M$ is representable over $\GF(3)$, $\GF(4)$, and $\GF(5)$
\item $M$ is representable over all fields except possibly $\GF(2)$\textcolor{red}{.}
\end{itemize}
He also showed \cite[Theorem 1.2]{w97} that the class of $\sqrt[6]{1}$-matroids consists exactly of those matroids representable over $\GF(3)$ and $\GF(4)$ and \cite[Theorem 1.1]{w97} that the class of dyadic matroids consists exactly of those matroids representable over $\GF(3)$ and $\GF(5)$. Thus, the class of near-regular matroids is the intersection of the classes of $\sqrt[6]{1}$-matroids and dyadic matroids.

A geometric representation of $\mathrm{AG}(2,3)\backslash e$ is given in Figure \ref{fig:AG23-}. 
\begin{figure}[ht]
\[\begin{tikzpicture}[x=1.3cm, y=1.1cm]
	\vertex[fill,inner sep=1pt,minimum size=1pt] (1) at (0,2) [label=above:$1$] {};
 	\vertex[fill,inner sep=1pt,minimum size=1pt] (2) at (2,2) [label=above:$2$] {};
 	\vertex[fill,inner sep=1pt,minimum size=1pt] (3) at (0,0) [label=below:$3$] {};
	\vertex[fill,inner sep=1pt,minimum size=1pt] (4) at (1,2) [label=below:$4$] {};
 	\vertex[fill,inner sep=1pt,minimum size=1pt] (5) at (0,1) [label=left:$5$] {};
 	\vertex[fill,inner sep=1pt,minimum size=1pt] (6) at (2,0) [label=below:$6$] {};
 	\vertex[fill,inner sep=1pt,minimum size=1pt] (7) at (2,1) [label=right:$7$] {};
 	\vertex[fill,inner sep=1pt,minimum size=1pt] (8) at (1,0) [label=above:$8$] {};
 	\path
 		(1) edge (3)
 		(1) edge (2)
 		(3) edge (6)
 		(2) edge (6);
\draw plot [smooth] coordinates {(0,0) (3,-1) (2,1) (1,2)};
\draw plot [smooth] coordinates {(0,1) (1,2) (3,3) (2,0)};
\draw plot [smooth] coordinates {(0,2) (-1,-1) (1,0) (2,1)};
\draw plot [smooth] coordinates {(1,0) (0,1) (-1,3) (2,2)};
\end{tikzpicture}\]
\caption{A Geometric Representation of $\mathrm{AG}(2,3)\backslash e$}
  \label{fig:AG23-}
\end{figure}
It is fairly well known that $\mathrm{AG}(2,3)\backslash e$ is an excluded minor for the class of dyadic matroids. (See, for example, \cite[Section 14.7]{o11}.) We will use this fact in Section \ref{sec:dyadic}.

It is an open problem to determine the complete list of excluded minors for the dyadic matroids; however, the excluded minors for the classes of $\sqrt[6]{1}$-matroids and near-regular matroids have been determined. Geelen, Gerards, and Kapoor \cite[Corollary 1.4]{ggk00} showed that the excluded minors for the class of $\sqrt[6]{1}$-matroids are $U_{2,5}$, $U_{3,5}$, $F_7$, $F_7^*$, $F_7^-$, $(F_7^-)^*$, and $P_8$. (We refer the reader to \cite{ggk00} or \cite{o11} for the definitions of these matroids.) Hall, Mayhew, and Van Zwam \cite[Theorem 1.2]{hmvz18}, based on unpublished work by Geelen, showed that the excluded minors for the class of near-regular matroids are $U_{2,5}$, $U_{3,5}$, $F_7$, $F_7^*$, $F_7^-$, $(F_7^-)^*$, $\AG(2,3)\backslash e$, $(\AG(2,3)\backslash e)^*$, $\Delta_T(\AG(2,3)\backslash e)$, and $P_8$. Here, $\Delta_T(\AG(2,3)\backslash e)$ is the result of performing a $\Delta$-$Y$ operation on $\AG(2,3)\backslash e$.

If $r\neq3$, it follows from results of Kung \cite[Theorem 1.1]{k90} and Kung and Oxley \cite[Theorem 1.1]{ko88} that the largest simple dyadic matroid of rank $r$ is the rank-$r$ ternary Dowling geometry, which is a signed-graphic matroid. Again, suppose $r\neq3$. Then Oxley, Vertigan, and Whittle \cite[Theorem 2.1, Corollary 2.2]{ovw98} showed that $T_r^1$ is the largest simple $\sqrt[6]{1}$-matroid of rank $r$ and the largest simple near-regular matroid of rank $r$. We remark without proof that our main results here, combined with \cite[Lemmas 4.14, 4.16]{gvz18}, show that Hypothesis \ref{hyp:clique-template} agrees with these known results.

\section{Frame Templates}
\label{sec:Frame Templates}

The notion of frame templates was introduced by Geelen, Gerards, and Whittle in \cite{ggw15} to describe the structure of the highly connected members of minor-closed classes of matroids representable over a fixed finite field. Frame templates have been studied further in \cite{gvz17,nw17,gvz18,g-submitted}. In this section, we give several results proved in those papers that we will need to prove the main results in this paper. The results in \cite{ggw15} technically deal with represented matroids---which can be thought of as fixed representation matrices for matroids. However, since we only deal with ternary matroids in this paper, and since ternary matroids are uniquely $\mathrm{GF}(3)$-representable \cite{bl76}, we will state the results in terms of matroids rather than represented matroids.

A \emph{frame matrix} is a matrix each of whose columns has at most two nonzero entries. If $\F$ is a field, let $\mathbb{F}^{\times}$ denote the multiplicative group of $\mathbb{F}$, and let $\Gamma$ be a subgroup of $\mathbb{F}^{\times}$. A \emph{$\Gamma$-frame matrix} is a frame matrix $A$ such that:
\begin{itemize}
 \item Each column of $A$ with a nonzero entry contains a 1.
 \item If a column of $A$ has a second nonzero entry, then that entry is $-\gamma$ for some $\gamma\in\Gamma$.
\end{itemize}
If $\Gamma=\{1\}$, then the vector matroid of a $\Gamma$-frame matrix is a graphic matroid. For this reason, we will call the columns of a $\{1\}$-frame matrix \emph{graphic columns}.

A \textit{frame template} over a field $\mathbb{F}$ is a tuple $\Phi=(\Gamma,C,X,Y_0,Y_1,A_1,\Delta,\Lambda)$ such that the following hold\textcolor{red}{:}
\begin{itemize}
 \item [(i)] $\Gamma$ is a subgroup of $\mathbb{F}^{\times}$.
 \item [(ii)] $C$, $X$, $Y_0$ and $Y_1$ are disjoint finite sets.
 \item [(iii)] $A_1\in \mathbb{F}^{X\times (C\cup Y_0\cup Y_1)}$.
 \item [(iv)] $\Lambda$ is a subgroup of the additive group of $\mathbb{F}^X$ and is closed under scaling by elements of $\Gamma$.
 \item [(v)] $\Delta$ is a subgroup of the additive group of $\mathbb{F}^{C\cup Y_0 \cup Y_1}$ and is closed under scaling by elements of $\Gamma$.
\end{itemize}

Let $\Phi=(\Gamma,C,X,Y_0,Y_1,A_1,\Delta,\Lambda)$ be a frame template. Let $B$ and $E$ be finite sets, and let $A'\in\mathbb{F}^{B\times E}$. We say that $A'$ \textit{respects}\footnote{\label{note1}For simplicity, we will use the terms \emph{respecting} and \emph{conforming} to mean what was called \emph{virtual respecting} and \emph{virtual conforming} in \cite{gvz17} and \cite{g-submitted}. The distinction between conforming and virtual conforming is explained in \cite{gvz17}. We can do this since every matroid virtually conforming to a template is a minor of some matroid conforming to that template \cite[Lemma 3.4]{gvz17}.}  $\Phi$ if the following hold:
\begin{itemize}
 \item [(i)] $X\subseteq B$ and $C, Y_0, Y_1\subseteq E$.
 \item [(ii)] $A'[X, C\cup Y_0\cup Y_1]=A_1$.
 \item [(iii)] There exists a set $Z\subseteq E-(C\cup Y_0\cup Y_1)$ such that $A'[X,Z]=0$, each column of $A'[B-X,Z]$ is a unit vector or zero vector, and $A'[B-X, E-(C\cup Y_0\cup Y_1\cup Z)]$ is a $\Gamma$-frame matrix.
 \item [(iv)] Each column of $A'[X,E-(C\cup Y_0\cup Y_1\cup Z)]$ is contained in $\Lambda$.
 \item [(v)] Each row of $A'[B-X, C\cup Y_0\cup Y_1]$ is contained in $\Delta$.
\end{itemize}

The structure of $A'$ is shown below.

\begin{center}
\begin{tabular}{ r|c|c|ccc| }
\multicolumn{2}{c}{}&\multicolumn{1}{c}{$Z$}&\multicolumn{1}{c}{$Y_0$}&\multicolumn{1}{c}{$Y_1$}&\multicolumn{1}{c}{$C$}\\
\cline{2-6}
$X$&columns from $\Lambda$&$0$&&$A_1$&\\
\cline{2-6}
&&&&&\\
&$\Gamma$-frame matrix&unit or zero columns&\multicolumn{3}{c|}{rows from  $\Delta$}\\
&&&&&\\
\cline{2-6}
\end{tabular}
\end{center}

Now, suppose that $A'$ respects $\Phi$ and that $A\in \mathbb{F}^{B\times E}$ satisfies the following conditions:
\begin{itemize}
\item [(i)] $A[B,E-Z]=A'[B,E-Z]$
\item [(ii)] For each $i\in Z$ there exists $j\in Y_1$ such that the $i$-th column of $A$ is the sum of the $i$-th and the $j$-th columns of $A'$.
\end{itemize}
We say that such a matrix $A$ \textit{conforms}\textsuperscript{\ref{note1}} to $\Phi$.

Let $M$ be an $\mathbb{F}$-representable matroid. We say that $M$ \textit{conforms}\textsuperscript{\ref{note1}} to $\Phi$ if there is a matrix $A$ conforming to $\Phi$ such that $M$ is isomorphic to $M(A)/C\backslash Y_1$. We denote by $\mathcal{M}(\Phi)$ the set of matroids that conform to $\Phi$. If $M^*$ conforms to a template $\Phi$, we say that $M$ \textit{coconforms} to $\Phi$. We denote by $\mathcal{M}^*(\Phi)$ the set of matroids that coconform to $\Phi$.

We now state the hypotheses on which the main results are based. As stated in Section \ref{sec:Introduction}, they are modified versions of a hypothesis given by Geelen, Gerards, and Whittle in \cite{ggw15}, and their proofs are forthcoming. In their current forms, these hypotheses were stated in \cite{gvz18}.

\begin{hypothesis}[{\cite[{Hypothesis 4.3}]{gvz18}}]
\label{hyp:connected-template}
Let $\mathbb F$ be a finite field, let $m$ be a positive integer, and let 
$\mathcal M$ be a minor-closed class of $\mathbb F$-representable matroids.
Then there exist $k\in\mathbb{Z}_+$ and
frame templates $\Phi_1,\ldots,\Phi_s,\Psi_1,\ldots,\Psi_t$ such that
\begin{enumerate}
\item
$\mathcal{M}$ contains each of the classes
$\mathcal{M}(\Phi_1),\ldots,\mathcal{M}(\Phi_s)$,
\item
$\mathcal{M}$ contains the duals of the matroids in each of the classes
$\mathcal{M}(\Psi_1),\ldots,\mathcal{M}(\Psi_t)$, and
\item
if $M$ is a simple $k$-connected member of $\mathcal M$ with at least $2k$ elements
and $M$ has no $\PG(m-1,\bFp)$-minor, 
then either
$M$ is a member of at least one of the classes
$\mathcal{M}(\Phi_1),\ldots,\mathcal{M}(\Phi_s)$, or
$M^*$ is a member of at least one of the classes
$\mathcal{M}(\Psi_1),\ldots,\mathcal{M}(\Psi_t)$.
\end{enumerate}
\end{hypothesis}

\begin{hypothesis}[{\cite[{Hypothesis 4.6}]{gvz18}}]
 \label{hyp:clique-template}
Let $\mathbb F$ be a finite field, let $m$ be a positive integer, and let $\mathcal M$ be a minor-closed class of $\mathbb F$-representable matroids. Then there exist $k,n\in\mathbb{Z}_+$ and frame templates $\Phi_1,\ldots,\Phi_s,\Psi_1,\ldots,\Psi_t$ such that
\begin{enumerate}
\item $\mathcal{M}$ contains each of the classes $\mathcal{M}(\Phi_1),\ldots,\mathcal{M}(\Phi_s)$,
\item $\mathcal{M}$ contains the duals of the matroids in each of the classes $\mathcal{M}(\Psi_1),\ldots,\mathcal{M}(\Psi_t)$,
\item if $M$ is a simple vertically $k$-connected member of $\mathcal M$ with an $M(K_n)$-minor but no $\PG(m-1,\bFp)$-minor, then $M$ is a member of at least one of the classes $\mathcal{M}(\Phi_1),\ldots,\mathcal{M}(\Phi_s)$, and
\item if $M$ is a cosimple cyclically $k$-connected member of $\mathcal M$ with an $M^*(K_n)$-minor but no $\PG(m-1,\bFp)$-minor, then $M^*$ is a member of at least one of the classes $\mathcal{M}(\Psi_1),\ldots,\mathcal{M}(\Psi_t)$.
\end{enumerate}
\end{hypothesis}

If $\Phi$ and $\Phi'$ are frame templates, it is possible that $\mathcal{M}(\Phi) = \mathcal{M}(\Phi')$ even though $\Phi$ and $\Phi'$ look very different.

\begin{definition}[{\cite[{Definition 6.3}]{g-submitted}}]
\label{def:equivalent} Let $\Phi$ and $\Phi'$ be frame templates over a field $\mathbb{F}$, then the pair $\Phi,\Phi'$ are \emph{strongly equivalent} if $\mathcal{M}(\Phi)=\mathcal{M}(\Phi')$. The pair $\Phi,\Phi'$ are \emph{minor equivalent} if $\overline{\mathcal{M}(\Phi)}=\overline{\mathcal{M}(\Phi')}$.
\end{definition}

There are other notions of template equivalence (namely equivalence, algebraic equivalence, and semi-strong equivalence) given in \cite{g-submitted}, but all of these imply minor equivalence.

If $\F$ is a field and $E$ is a set, we say that two subgroups $U$ and $W$ of the additive subgroup of the vector space $\F^E$ are \emph{skew} if $U\cap W=\{0\}$. \textcolor{red}{If $U\subseteq\mathbb{F}^E$ and $\Gamma\subseteq\mathbb{F}$, let $\Gamma U=\{\gamma u|\gamma\in\Gamma, u\in U\}$.} Nelson and Walsh \cite{nw17} gave Definition \ref{def:reduced} below. 

\begin{definition}
 \label{def:reduced}
A frame template $\Phi=(\Gamma,C,X,Y_0,Y_1,A_1,\Delta,\Lambda)$ is \emph{reduced} if there is a partition $(X_0,X_1)$ of $X$ such that
\begin{itemize}
 \item $\Delta=\Gamma(\mathbb{F}^C_p\times\Delta')$ for some additive subgroup $\Delta'$ of $\mathbb{F}^{Y_0\cup Y_1}$,
\item$\mathbb{F}_p^{X_0}\subseteq\Lambda|X_0$ while $\Lambda|X_1=\{0\}$ and $A_1[X_1,C]=0$, and
\item the rows of $A_1[X_1,C\cup Y_0\cup Y_1]$ form a basis for a subspace whose additive group is skew to $\Delta$.
\end{itemize}
We will refer to the partition $X=X_0\cup X_1$ given in Definition \ref{def:reduced} as the \emph{reduction partition} of $\Phi$.
\end{definition}

The following definition and theorem are found in \cite{gvz18}.

\begin{definition}[{\cite[{Definition 5.3}]{gvz18}}]
\label{def:refined}
 
Let $\Phi=(\Gamma,C,X,Y_0,Y_1,A_1,\Delta,\Lambda)$ be a reduced frame template, with reduction partition $X=X_0\cup X_1$. If $Y_1$ spans the matroid $M(A_1[X_1,Y_0\cup Y_1])$, then $\Phi$ is \emph{refined}.
\end{definition}

\begin{theorem}[{\cite[{Theorem 5.6}]{gvz18}}]
\label{thm:Y1spanning}
If Hypothesis \ref{hyp:connected-template} holds for a class $\mathcal{M}$, then the constant $k$ and the templates $\Phi_1,\ldots,\Phi_s,\Psi_1,\ldots,\Psi_t$ can be chosen so that the templates are refined. Moreover, if Hypothesis \ref{hyp:clique-template} holds for a class $\mathcal{M}$, then the constants $k, n$, and the templates $\Phi_1,\ldots,\Phi_s,\Psi_1,\ldots,\Psi_t$ can be chosen so that the templates are refined.
\end{theorem}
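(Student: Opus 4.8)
The plan is to prove the statement in two stages: first arrange that all of the templates appearing in the conclusion of the relevant hypothesis are \emph{reduced}, and then modify each reduced template so that it is also \emph{refined}, checking throughout that the three (respectively four) conditions of Hypothesis \ref{hyp:connected-template} (respectively Hypothesis \ref{hyp:clique-template}) continue to hold. Since the co-templates $\Psi_1,\dots,\Psi_t$ are handled exactly as the templates $\Phi_1,\dots,\Phi_s$ --- applying the argument to the minor-closed class $\mathcal{M}^*$ for the conditions phrased in terms of duals --- I will only discuss the $\Phi_i$.

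For the first stage I would appeal to the result of Nelson and Walsh \cite{nw17} (see also \cite{g-submitted}) that every frame template over $\F$ is equivalent --- in a sense that implies minor equivalence as in Definition \ref{def:equivalent}, and is in fact strong enough to interchange templates in condition (3) --- to a reduced frame template. Since $\mathcal{M}$ is minor-closed, $\mathcal{M}(\Phi_i)\subseteq\mathcal{M}$ if and only if $\overline{\mathcal{M}(\Phi_i)}\subseteq\mathcal{M}$, so replacing each $\Phi_i$ by a reduced template with the same minor-closure preserves conditions (1) and (2) as well. Hence we may assume that every $\Phi_i$ is reduced, with reduction partition $X=X_0\cup X_1$.

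For the second stage, fix a reduced template $\Phi=(\Gamma,C,X,Y_0,Y_1,A_1,\Delta,\Lambda)$ that is not refined, and set $M_1=M(A_1[X_1,Y_0\cup Y_1])$. The reducedness of $\Phi$ forces $M_1$ to have rank $|X_1|$ and to be spanned by $Y_0\cup Y_1$, while $Y_1$ spans only a rank-$r_1$ flat with $r_1<|X_1|$. After a row operation on the coordinates $X_1$ --- which replaces $\Phi$ by a strongly equivalent reduced template --- I may assume that the nonzero rows of $A_1[X_1,Y_1]$ lie in a subset $X_1'\subseteq X_1$ of size $r_1$ with $A_1[X_1',Y_1]$ of full row rank, so that the remaining rows $X_1'':=X_1\setminus X_1'$ are zero on $Y_1$. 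I then adjoin to $Y_1$ a set $Y_1^{\mathrm{new}}$ of $|X_1''|$ new columns, extending $A_1$ so that the $X_1''$-entries of these columns form an identity block and all of their other entries are $0$, and extending $\Delta$ by $0$ on the new coordinates; call the resulting template $\Phi'$. By construction $Y_1\cup Y_1^{\mathrm{new}}$ spans $M(A_1[X_1,Y_0\cup Y_1\cup Y_1^{\mathrm{new}}])$, so $\Phi'$ is refined, and iterating the construction strictly decreases $|X_1|$ and so terminates. That $\Phi'$ is still reduced is routine; the only point needing care is that the rows of the extended matrix $A_1[X_1,C\cup Y_0\cup Y_1\cup Y_1^{\mathrm{new}}]$ still span a subspace skew to the extended $\Delta$, which holds because any dependence exhibiting a failure of skewness would, on the new coordinates, force the $X_1''$-part of its coefficient vector to vanish, after which skewness for $\Phi$ finishes the argument. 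Moreover every matrix conforming to $\Phi$ extends (by adjoining the new columns with $\Delta$-part $0$) to a matrix conforming to $\Phi'$ representing the same conforming matroid, so $\mathcal{M}(\Phi)\subseteq\mathcal{M}(\Phi')$; this gives condition (3) for the refined templates for free, and condition (1) would follow from $\mathcal{M}(\Phi')\subseteq\overline{\mathcal{M}(\Phi)}$.

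I expect the main obstacle to be exactly this last inclusion, $\mathcal{M}(\Phi')\subseteq\mathcal{M}$. The difficulty is that in a matrix conforming to $\Phi'$ the columns indexed by $Z$ may be modified by the new columns $Y_1^{\mathrm{new}}$, which produces columns whose restriction to the rows $X_1$ is nonzero --- something impossible for a matrix conforming to $\Phi$, where the $Y_1$-columns, and hence every modified $Z$-column, vanish on $X_1''$. So one must show that these genuinely new conforming matroids still lie in $\mathcal{M}$. I would attempt this by choosing the columns in $Y_1^{\mathrm{new}}$ carefully enough that, after a suitable row operation over $\GF(3)$, each such modified $Z$-column becomes either a legitimate $\Gamma$-frame column or a parallel copy of an existing element, so that the resulting matroid is visibly a minor of (or a parallel extension controlled by) a matroid conforming to $\Phi$; pinning this down, using reducedness and the fact that we work over $\GF(3)$, is where the real content of the theorem lies. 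With $\mathcal{M}(\Phi')\subseteq\mathcal{M}$ established, performing the second stage on every $\Phi_i$ and $\Psi_j$ and taking the largest resulting $k$ (and $n$) yields refined templates witnessing Hypothesis \ref{hyp:connected-template} (respectively Hypothesis \ref{hyp:clique-template}).
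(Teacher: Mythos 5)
You should first note that this paper does not prove Theorem \ref{thm:Y1spanning} at all: it is quoted from \cite[Theorem 5.6]{gvz18}, so the comparison below is with the argument there and with the closely related Lemma \ref{lem:equiv-refined} of the present paper, which exposes the key mechanism. Your first stage --- passing to reduced templates via an equivalence strong enough to preserve conditions (1)--(3) --- is consistent with the literature. Your second stage, however, is the wrong move, and the step you flag as unproved, namely $\mathcal{M}(\Phi')\subseteq\mathcal{M}$, is genuinely false for the construction you describe. Take $\F=\GF(3)$, $\Gamma=\{1\}$, $C=Y_1=\emptyset$, $|X|=|X_1|=4$, $|Y_0|=5$, $A_1=[I_4\mid\mathbf{1}]$ with $\mathbf{1}$ the all-ones column, and $\Delta=\Lambda=\{0\}$. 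This $\Phi$ is reduced but not refined, and every conforming matrix is block diagonal, with $A_1$ on the rows of $X$ and a $\{1\}$-frame matrix on the remaining rows; hence every matroid in $\overline{\mathcal{M}(\Phi)}$ is a direct sum of a graphic matroid and a minor of $U_{4,5}\cong M(C_5)$, so is graphic. Consequently $\Phi$, together with the trivial graphic template, is a legitimate list witnessing Hypothesis \ref{hyp:connected-template} for the minor-closed class $\mathcal{M}$ of ternary graphic matroids. Your $\Phi'$ adjoins $Y_1^{\mathrm{new}}$ with $A_1[X,Y_1^{\mathrm{new}}]=I_4$ and extends $\Delta$ by zero. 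Taking one frame row $v$, the frame column $e_v$, and four columns of $Z$ each equal to $e_v$ in the respecting matrix, one obtains a matroid conforming to $\Phi'$ represented by the ten columns $e_{x_1},\dots,e_{x_4},\mathbf{1},e_v,e_{x_1}+e_v,\dots,e_{x_4}+e_v$ of $\F_3^{X\cup\{v\}}$. Contracting the independent set $\{\mathbf{1},e_{x_2},e_{x_3}\}$ via the quotient map $w\mapsto(w_{x_4}-w_{x_1},\,w_v)$ sends $e_{x_1},e_v,e_{x_1}+e_v,e_{x_4}+e_v$ to $(-1,0),(0,1),(-1,1),(1,1)$, four pairwise non-parallel vectors of $\F_3^2$; so this matroid has a $U_{2,4}$-minor, is not binary, and is certainly not graphic. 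Thus $\mathcal{M}(\Phi')\nsubseteq\mathcal{M}$ and condition (1) of the hypothesis is destroyed. No cleverer choice of the new columns escapes this: any refinement-by-enlargement lets the modified $Z$-columns tie the rows of $X_1''$ to the frame rows, which is exactly the structure that $\overline{\mathcal{M}(\Phi)}$ (here, a class of direct sums) cannot contain.

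The correct mechanism is the opposite of enlargement: a reduced template that is not refined can simply be \emph{discarded} after increasing $k$ (and $n$). Indeed, after your row normalisation making $A_1[X_1'',Y_1]=0$, each row of a conforming matrix indexed by $X_1''$ is zero on $C$ (by reducedness), on the frame part and on $Z$ (since $\Lambda|X_1=\{0\}$ and the $Y_1$-columns added to $Z$-columns vanish on $X_1''$), and on $Y_1$; it is untouched by the pivots that contract $C$, and it is nonzero on $Y_0$ because the rows of $A_1[X_1,C\cup Y_0\cup Y_1]$ are linearly independent. So in the final representation of $M(A)/C\backslash Y_1$ this row is a nonzero cocycle supported inside $Y_0$, whence every matroid conforming to a non-refined reduced template has a cocircuit of size at most $|Y_0|$ --- precisely the observation driving Lemma \ref{lem:equiv-refined}. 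A $k$-connected matroid with at least $2k$ elements, or a vertically $k$-connected matroid whose rank exceeds $|Y_0|$ (guaranteed by the $M(K_n)$-minor), has no such cocircuit once $k>|Y_0|$. Hence, after enlarging $k$ beyond the maximum of the $|Y_0|$'s, the non-refined templates contribute nothing to conditions (3) and (4) and may be deleted, while conditions (1) and (2) are preserved under deletion. This removes any need for the containment you could not establish.
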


A few specific templates have been given names. We list some of those now, specifically for the ternary case. Note that $\mathcal{M}(\Phi_2)$ is the class of signed-graphic matroids.

\begin{definition}[{\cite[{Definition 7.8, ternary case}]{g-submitted}}]
\label{def:minimal}
\leavevmode
\begin{itemize}
\item $\Phi_2$ is the template with all sets empty and all groups trivial except that $\Gamma=\{\pm1\}$.
\item $\Phi_C$ is the template with all groups trivial and all sets empty except that $|C|=1$ and $\Delta\cong\mathbb{Z}/3\mathbb{Z}$.
\item $\Phi_X$ is the template with all groups trivial and all sets empty except that $|X|=1$ and $\Lambda\cong\mathbb{Z}/3\mathbb{Z}$.
\item $\Phi_{Y_0}$ is the template with all groups trivial and all sets empty except that $|Y_0|=1$ and $\Delta\cong\mathbb{Z}/3\mathbb{Z}$. 
\item $\Phi_{CX}$ is the template with $Y_0=Y_1=\emptyset$, with $|C|=|X|=1$, with $\Delta\cong\Lambda\cong\mathbb{Z}/3\mathbb{Z}$, with $\Gamma$ trivial, and with $A_1=[1]$.
\item $\Phi_{CX2}$ is the template with $Y_0=Y_1=\emptyset$, with $|C|=|X|=1$, with $\Delta\cong\Lambda\cong\mathbb{Z}/3\mathbb{Z}$, with $\Gamma$ trivial, and with $A_1=[-1]$.
\end{itemize}
\end{definition}

The next lemma follows directly from \cite[{Lemma 7.9}]{g-submitted}.

\begin{lemma}
\label{lem:YCX}
The following are true: $\overline{\mathcal{M}(\Phi_{Y_0})}\subseteq\overline{\mathcal{M}(\Phi_C)}$, and $\overline{\mathcal{M}(\Phi_X)}\subseteq\overline{\mathcal{M}(\Phi_{CX})}$, and $\overline{\mathcal{M}(\Phi_X)}\subseteq\overline{\mathcal{M}(\Phi_{CX2})}$.
\end{lemma}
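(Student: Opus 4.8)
The plan is to derive this lemma from \cite[Lemma 7.9]{g-submitted} by exhibiting, in each of the three containments, an explicit minor-equivalence (or the ``contained in the minor-closure'' relation) between the relevant named templates. The key observation is that each of $\Phi_{Y_0}$, $\Phi_X$, $\Phi_C$, $\Phi_{CX}$, $\Phi_{CX2}$ is one of the ``minimal'' templates catalogued in Definition \ref{def:minimal}, and \cite[Lemma 7.9]{g-submitted} presumably catalogues exactly the minor-containments (and minor-equivalences) among these minimal ternary templates. So the real content here is bookkeeping: matching our three assertions to the corresponding statements in that lemma.

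First I would handle $\overline{\mathcal{M}(\Phi_{Y_0})}\subseteq\overline{\mathcal{M}(\Phi_C)}$. Both templates have all groups trivial except $\Delta\cong\mathbb Z/3\mathbb Z$, the difference being whether the single extra coordinate lives in $Y_0$ or in $C$; since rows of $A'[B-X,C\cup Y_0\cup Y_1]$ are drawn from $\Delta$ in both cases, and the only effect of $C$ versus $Y_0$ on the conforming construction is that columns indexed by $C$ get contracted ($M(A)/C\ba Y_1$) while those indexed by $Y_0$ are kept, one checks that every matroid conforming to $\Phi_{Y_0}$ is (up to the freedom in $Z$ and in the choice of the $\Delta$-row) a minor of one conforming to $\Phi_C$ — contracting the $C$-element recovers the $Y_0$-behaviour. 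This is exactly the kind of statement \cite[Lemma 7.9]{g-submitted} is set up to provide, so I would simply cite the relevant clause.

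Next, for $\overline{\mathcal{M}(\Phi_X)}\subseteq\overline{\mathcal{M}(\Phi_{CX})}$ and $\overline{\mathcal{M}(\Phi_X)}\subseteq\overline{\mathcal{M}(\Phi_{CX2})}$: here $\Phi_{CX}$ and $\Phi_{CX2}$ each have $|C|=|X|=1$, $\Delta\cong\Lambda\cong\mathbb Z/3\mathbb Z$, and $A_1=[1]$ or $A_1=[-1]$ respectively, while $\Phi_X$ has only the $X$-coordinate with $\Lambda\cong\mathbb Z/3\mathbb Z$. Deleting (in the minor-closure sense) the element of $C$ from a matroid conforming to $\Phi_{CX}$ or $\Phi_{CX2}$ — equivalently, taking the submatrix that omits the $C$-column, after which the value of $A_1$ becomes irrelevant — yields precisely the conforming structure of $\Phi_X$, so $\overline{\mathcal{M}(\Phi_X)}$ is contained in both minor-closures. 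Again this should be an instance of \cite[Lemma 7.9]{g-submitted}, and the role of having \emph{two} versions $\Phi_{CX}$, $\Phi_{CX2}$ (with $A_1=[1]$ and $A_1=[-1]$) is that the lemma there is stated for each, and both specialise to the same containment for $\Phi_X$.

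The only mild obstacle is purely notational: making sure that our names $\Phi_{Y_0},\Phi_X,\Phi_C,\Phi_{CX},\Phi_{CX2}$ correspond to the templates as indexed in \cite[Lemma 7.9]{g-submitted} (the ternary specialisation of \cite[Definition 7.8]{g-submitted}), and that ``minor equivalence'' or ``minor containment'' there is the relation $\overline{\mathcal M(\cdot)}\subseteq\overline{\mathcal M(\cdot)}$ of Definition \ref{def:equivalent}. Once that identification is made, the lemma is immediate, with no further computation required; hence the one-line proof ``This follows directly from \cite[Lemma 7.9]{g-submitted}'' that precedes the statement is justified, and I would at most add a sentence pointing out which containments in that lemma are being invoked.
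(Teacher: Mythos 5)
Your proposal matches the paper's approach exactly: the paper's entire proof of this lemma is the citation of \cite[Lemma 7.9]{g-submitted}, so deferring to that lemma (after checking the notational correspondence) is precisely the intended argument. One caution about your supplementary sketch for the last two containments: the element indexed by $C$ is \emph{contracted}, not deleted, in forming a matroid conforming to $\Phi_{CX}$ or $\Phi_{CX2}$, so ``taking the submatrix that omits the $C$-column'' does not by itself exhibit a $\Phi_X$-conforming matroid as a minor of a $\Phi_{CX}$-conforming one; the clean direct argument instead chooses the $C$-column to be supported on a single extra row of $B-X$ (together with the forced entry from $A_1$), so that contracting $C$ amounts to deleting that row and column and leaves the $\Phi_X$ structure intact.
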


Frame templates where the groups $\Gamma$, $\Lambda$, and $\Delta$ are trivial are studied extensively in \cite{g-submitted}.

\begin{definition}[{\cite[{Definitions 6.9--6.10}]{g-submitted}}]
\label{def:Y-template}
A \emph{$Y$-template} is a refined frame template with all groups trivial (so $C=X_0=\emptyset$). If $A_1\in\mathbb{F}^{X\times(Y_0\cup Y_1)}$ has the form $[I_{|X|}|P_1|P_0]$, with $[I|P_1]\in\mathbb{F}^{X\times  Y_1}$, and with $P_0\in\mathbb{F}^{X\times  Y_0}$, then $\YT(P_0,P_1)$ is defined to be the $Y$-template $(\{1\},\emptyset,X,Y_0,Y_1,A_1,\{0\},\{0\})$.
\end{definition}
In all of the $Y$-templates studied in Sections \ref{sec:dyadic} and \ref{sec:6sqrt}, the matrix $P_1$ is an empty matrix. However, we use Definition \ref{def:Y-template} in order to stay consistent with \cite{g-submitted}.

The next lemma follows from \cite[Lemmas 7.16--17]{g-submitted}.

\begin{lemma}
\label{lem:Gamma-Y}
Let $\Phi$ be a frame template such that $\overline{\mathcal{M}(\Phi')}\nsubseteq\OVM$ for each template $\Phi'\in\{\Phi_X,\Phi_C,\Phi_{Y_0},\Phi_{CX},\Phi_{CX2}\}$. Then $\Phi$ is strongly equivalent to a template $(\Gamma,C,X,Y_0,Y_1,A_1,\Delta,\Lambda)$ with $C=\emptyset$ and with $\Lambda$ and $\Delta$ both trivial.
\end{lemma}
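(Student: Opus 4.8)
The statement to prove is Lemma~\ref{lem:Gamma-Y}: under the hypothesis that $\overline{\mathcal{M}(\Phi')}\nsubseteq\overline{\mathcal{M}(\Phi)}$ for each $\Phi'\in\{\Phi_X,\Phi_C,\Phi_{Y_0},\Phi_{CX},\Phi_{CX2}\}$, the template $\Phi$ is strongly equivalent to one with $C=\emptyset$ and with $\Lambda,\Delta$ trivial.

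The plan is to chase through the structural reductions from \cite{g-submitted}, using the named minimal templates as obstructions. First I would invoke \cite[Lemma~7.16]{g-submitted} (or whichever of Lemmas 7.16--17 handles the group $\Delta$ and the set $C$): this should say that either $\Phi$ is strongly equivalent to a template in which $C=\emptyset$ and $\Delta$ is trivial, or else one of $\mathcal{M}(\Phi_C)$, $\mathcal{M}(\Phi_{Y_0})$, or $\mathcal{M}(\Phi_{CX})$ (or $\Phi_{CX2}$) has minor-closure contained in $\overline{\mathcal{M}(\Phi)}$. The point is that a nontrivial $\Delta$ forces a $\Phi_C$- or $\Phi_{Y_0}$-like substructure to appear among the conforming matroids (a row from $\Delta$ meeting $C$ or $Y_0$), and a nontrivial $C$ interacting with nontrivial $A_1$ or $\Lambda$ forces a $\Phi_{CX}$ or $\Phi_{CX2}$ substructure; in the ternary case these are the only options since $\mathbb{Z}/3\mathbb{Z}$ has no proper nontrivial subgroups. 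By hypothesis all those containments fail, so we land in the first alternative and may assume $C=\emptyset$, $\Delta=\{0\}$.

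Next, with $C=\emptyset$ and $\Delta$ trivial, I would apply \cite[Lemma~7.17]{g-submitted} to the group $\Lambda$: either $\Phi$ is strongly equivalent to a template with $\Lambda$ also trivial, or $\overline{\mathcal{M}(\Phi_X)}\subseteq\overline{\mathcal{M}(\Phi)}$. A nontrivial $\Lambda$ means some column of $A'[X,\,\cdot\,]$ lies in a nontrivial additive subgroup of $\mathbb{F}_3^X$, and contracting/deleting appropriately exhibits a $\Phi_X$-conforming matroid as a minor; again the ternary hypothesis makes $\Lambda\cong\mathbb{Z}/3\mathbb{Z}$ the only nontrivial possibility, pinned down by a single coordinate. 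Since $\overline{\mathcal{M}(\Phi_X)}\nsubseteq\overline{\mathcal{M}(\Phi)}$ by hypothesis, the second alternative is impossible, so $\Lambda=\{0\}$ as well. Throughout, I would need to check that the strong-equivalence moves used to first kill $C$ and $\Delta$ do not reintroduce a nontrivial $\Lambda$ beyond what Lemma~7.17 can then remove — the cited lemmas should be arranged (as their numbering 7.16--7.17 suggests) so that they compose cleanly, but verifying the composition is the one place care is needed.

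The main obstacle I expect is bookkeeping rather than mathematical depth: making sure the hypothesis is applied to the \emph{right} list of obstruction templates at each stage, and that "strongly equivalent" (which preserves $\mathcal{M}(\Phi)$ on the nose, hence also $\overline{\mathcal{M}(\Phi)}$, by Definition~\ref{def:equivalent}) is genuinely what \cite[Lemmas 7.16--17]{g-submitted} deliver at each step, so that the hypothesis "$\overline{\mathcal{M}(\Phi')}\nsubseteq\overline{\mathcal{M}(\Phi)}$" transfers unchanged to the modified template. Once that is set up, the proof is a two-line application: strip $C$ and $\Delta$ via one lemma, strip $\Lambda$ via the other, with the five named templates $\Phi_X,\Phi_C,\Phi_{Y_0},\Phi_{CX},\Phi_{CX2}$ serving exactly as the certificates that the "bad" alternatives in those lemmas cannot occur.
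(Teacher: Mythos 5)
Your proposal matches the paper exactly in approach: the paper offers no argument beyond the statement that the lemma ``follows from \cite[Lemmas 7.16--17]{g-submitted},'' and your plan --- use one of those lemmas to eliminate $C$ and $\Delta$ and the other to eliminate $\Lambda$, with the five named templates serving as the certificates that the bad alternatives in those lemmas cannot occur --- is precisely the intended unpacking of that citation. Your caution about whether the strong-equivalence steps compose cleanly and whether the hypothesis transfers to the modified template is reasonable, but that burden is carried by the cited lemmas themselves rather than by anything in this paper.
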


The next several results and definitions are found in \cite{g-submitted}.

\begin{lemma}[{\cite[{Theorem 7.18}]{g-submitted}}]
\label{lem:reduce-to-Y-template}
Let $\Phi$ be a refined ternary frame template. Then either $\overline{\mathcal{M}(\Phi')}\subseteq\OVM$ for some $\Phi'\in\{\Phi_X,\Phi_C,\Phi_{Y_0},\Phi_{CX},\Phi_{CX2},\Phi_2\}$, or $\Phi$ is a $Y$-template.
\end{lemma}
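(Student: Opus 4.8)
The plan is to assume that $\overline{\mathcal{M}(\Phi')}\nsubseteq\OVM$ for every $\Phi'\in\{\Phi_X,\Phi_C,\Phi_{Y_0},\Phi_{CX},\Phi_{CX2},\Phi_2\}$ and to deduce that $\Phi$ is a $Y$-template. The argument has two parts: first, pinning down $\Gamma$ by means of $\Phi_2$; and second, trivialising $C$, $\Lambda$, and $\Delta$ by means of Lemma \ref{lem:Gamma-Y}.

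First I would determine $\Gamma$. Since $\GF(3)^{\times}=\{\pm1\}$, the group $\Gamma$ is either $\{1\}$ or all of $\GF(3)^{\times}$. Suppose $\Gamma=\{\pm1\}$. Given a $\{\pm1\}$-frame matrix $F$ over $\GF(3)$ representing a signed-graphic matroid $N$ (up to column scaling, every representation with at most two nonzero entries per column has this form), I would build a matrix $A'$ respecting $\Phi$ by taking the set $Z$ in the definition of respecting to be empty, setting $A'[B-X,E-(C\cup Y_0\cup Y_1)]=F$, setting every column of $A'[X,E-(C\cup Y_0\cup Y_1)]$ to $0$ (so these lie in $\Lambda$), and setting every row of $A'[B-X,C\cup Y_0\cup Y_1]$ to $0$ (so these lie in $\Delta$); then $A=A'$ conforms to $\Phi$. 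Because the columns of $F$ are zero on the rows indexed by $X$ while the columns indexed by $C$ are supported on those same rows, contracting $C$ and deleting $Y_1$ do not disturb the restriction of $M(A)$ to the columns of $F$, so $N$ is a restriction of the conforming matroid $M(A)/C\ba Y_1$. As $\overline{\mathcal{M}(\Phi_2)}$ is precisely the (minor-closed) class of signed-graphic matroids, this yields $\overline{\mathcal{M}(\Phi_2)}\subseteq\OVM$, a contradiction. Hence $\Gamma$ is trivial.

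Next I would apply Lemma \ref{lem:Gamma-Y}, whose hypothesis holds since $\overline{\mathcal{M}(\Phi')}\nsubseteq\OVM$ for each of $\Phi_X,\Phi_C,\Phi_{Y_0},\Phi_{CX},\Phi_{CX2}$; this produces a template $\Phi'=(\{1\},\emptyset,X',Y_0',Y_1',A_1',\{0\},\{0\})$ strongly equivalent to $\Phi$, with $C=\emptyset$ and $\Lambda,\Delta$ trivial. Performing row operations on $A_1'$ (which amount to row operations on the representing matrix and leave every conforming matroid unchanged, the remaining $X'$-entries of a respecting matrix being zero) and then discarding any zero rows of $A_1'$, that is, deleting redundant elements of $X'$, I may assume the rows of $A_1'$ are linearly independent; then $(X_0',X_1')=(\emptyset,X')$ is a reduction partition, so $\Phi'$ is reduced. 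Using that $\Phi$ is refined, the strong-equivalence moves behind Lemma \ref{lem:Gamma-Y} together with this row reduction can be arranged to preserve the spanning condition of Definition \ref{def:refined} (or completed using the normalisation behind Theorem \ref{thm:Y1spanning}), so that $\Phi'$ is refined. A refined frame template all of whose groups are trivial and with $C=X_0'=\emptyset$ is, by Definition \ref{def:Y-template}, a $Y$-template, namely $\YT(P_0,P_1)$ where $A_1'=[I\,|\,P_1\,|\,P_0]$. Since $\Phi$ is strongly equivalent to $\Phi'$, it follows that $\Phi$ is a $Y$-template.

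The step I expect to be the real obstacle is this last one: verifying that the strongly equivalent template supplied by Lemma \ref{lem:Gamma-Y} inherits refinedness from $\Phi$, so that the conclusion is genuinely ``$\Phi$ is a $Y$-template'' and not merely ``$\Phi$ is strongly equivalent to one''. Moving columns between $Y_0$ and $Y_1$ can enlarge $\overline{\mathcal{M}(\cdot)}$, so this normalisation must be controlled carefully. By contrast, the conclusion that $\Gamma$ is trivial and the reduction to $C=\emptyset$ with $\Lambda,\Delta$ trivial are either immediate from the ternary setting or packaged into Lemma \ref{lem:Gamma-Y}, which we are free to assume.
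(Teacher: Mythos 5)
This lemma is not proved in the paper at all; it is imported verbatim as Theorem~7.18 of \cite{g-submitted}, so there is no in-paper proof to compare against. That said, your reconstruction assembles exactly the ingredients the paper itself uses for the sibling statement, Lemma~\ref{lem:Phi2-or-Y}: rule out $\Gamma=\{\pm1\}$ via $\Phi_2$ (your construction with $Z=\emptyset$ and the zero choices from $\Lambda$ and $\Delta$ is correct, since the $C$-columns are supported on the $X$-rows while the frame columns are supported off them), then invoke Lemma~\ref{lem:Gamma-Y} to trivialise $C$, $\Lambda$, and $\Delta$, and finally transfer refinedness. Two small repairs. First, Lemma~\ref{lem:Gamma-Y} says nothing about the group $\Gamma$ of the strongly equivalent template it outputs, so you should run your $\Phi_2$ argument on that output template rather than on $\Phi$; this costs nothing, since the construction only uses $0\in\Lambda$ and $0\in\Delta$ and strong equivalence preserves $\OVM$. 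Second, the step you flag as the ``real obstacle''---that the strongly equivalent template with trivial groups and $C'=\emptyset$ inherits refinedness---is precisely the paper's Lemma~\ref{lem:equiv-refined}; you should cite it instead of gesturing at row operations, after which a refined template with all groups trivial is a $Y$-template by Definition~\ref{def:Y-template} and the argument closes.
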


\begin{definition}[{\cite[{Definition 9.3}]{g-submitted}}]
\label{def:lifted}
Let $\Phi=(\Gamma,C,X,Y_0,Y_1,A_1,\Delta,\Lambda)$ be a refined frame template with reduction partition $X=X_0\cup X_1$, with $A_1[X_0,Y_1]$ a zero matrix, and with $A_1[X_1,Y_1]$ an identity matrix. Then $\Phi$ is a \emph{lifted} template.
\end{definition}

\textcolor{red}{The next result is \cite[{Lemma 9.6}]{g-submitted}, with a few additional details that can be extracted directly from the proof in \cite{g-submitted}. Note that $\Gamma$ and $C$ are the same in both templates.}

\begin{lemma}
\label{lem:lift}
\textcolor{red}{Let $\Phi=(\Gamma,C,X,Y_0,Y_1,A_1,\Delta,\Lambda)$ be a} refined frame template. \textcolor{red}{Then $\Phi$}  is minor equivalent to a lifted template \textcolor{red}{$\Phi'=(\Gamma,C,X',Y'_0,Y'_1,A'_1,\Delta',\Lambda')$. Moreover, $\Delta$ or $\Lambda$ is trivial if and only if $\Delta'$ or $\Lambda'$ is trivial, respectively.}
\end{lemma}

\begin{definition}[{\cite[{Definition 9.7}]{g-submitted}}]
 \label{def:complete}
A $Y$-template $\YT(P_0,P_1)$ is \emph{complete} if $P_0$ contains $D_{|X|}$ as a submatrix.
\end{definition}

\begin{lemma}[{\cite[{Lemma 9.9}]{g-submitted}}]
\label{lem:no-graphic}
Let $P_0$, $P_1$, $Q_1$, and $Q_2$ be matrices with the same number of rows such that no column of $P_0$ is graphic but every column of $Q_1$ and $Q_2$ is graphic. The templates $\Phi_1=\YT([P_0|Q_1],P_1)$ and $\Phi_2=\YT([P_0|Q_2],P_1)$ are minor equivalent.
\end{lemma}

\begin{definition}[{\cite[{Definition 9.12}]{g-submitted}}]
\label{def:determined-by-P0}
The $Y$-template $\YT([P|D_{|X|}],[\emptyset])$ is the complete, lifted $Y$-template \emph{determined} by $P$ and is denoted by $\YT(P)$. (This notation differs from that used in \cite{g-submitted}.)
\end{definition}

\begin{definition}[{\cite[{Definition 6.12}]{g-submitted}}]
\label{def:universal}
Let $\YT(P)$ be a complete, lifted $Y$-template. The \emph{rank-$r$ universal matroid} for $\YT(P)$ is the matroid represented by the following matrix.
\[\left[\begin{array}{c|c|c}
\multirow{2}{*}{$I_r$}&\multirow{2}{*}{$D_r$}&P\\
\cline{3-3}
&&0\\
\end{array}\right]\]
\end{definition}

It is shown in \cite[Section 9]{g-submitted} that every simple matroid conforming to $\YT(P)$ is a restriction of some universal matroid for $\YT(P)$.

We refer to \cite[Section 11.4]{o11} for the definition of generalized parallel connections of matroids.

\begin{lemma}[{\cite[{Lemma 9.13}]{g-submitted}}]
\label{lem:univ-gen-par}
The rank-$r$ universal matroid of $\YT(P)$ is the generalized parallel connection of $M(K_{r+1})$ and $M([I_m|D_m|P])$ along $M(K_{m+1})$, where $m$ is the number of rows of $P$.
\end{lemma}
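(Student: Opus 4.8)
The plan is to write down matrix representations of the three matroids in which a common fixed representation of $N=M(K_{m+1})$ appears literally in all three, to check that this copy of $N$ sits inside $M(K_{r+1})$ as a modular flat, and then to read off the generalized parallel connection from the standard matrix description. Index the vertices of $K_{r+1}$ by $\{0,1,\dots,r\}$ and use $M(K_{r+1})=M([I_r|D_r])$, where the $i$-th column of $I_r$ is the edge $\{0,i\}$ and the columns of $D_r$ are the edges $\{i,j\}$ with $1\le i<j\le r$. Fix the copy of $K_{m+1}$ on the vertices $\{0,1,\dots,m\}$; then its edge set $T=E(K_{m+1})$ indexes the first $m$ columns of $I_r$ together with the columns of $D_r$ for the pairs contained in $\{1,\dots,m\}$, and each such column has support in rows $1,\dots,m$. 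Restricting those columns to those rows gives exactly $[I_m|D_m]$, so $M(K_{r+1})|T=M([I_m|D_m])=N$. On the other side, deleting the columns of $P$ from $[I_m|D_m|P]$ also yields $M([I_m|D_m])=N$, represented by the very same matrix. After identifying the two copies of $T$ we therefore have $E(M(K_{r+1}))\cap E(M([I_m|D_m|P]))=T$, with both matroids restricting to the \emph{same} represented copy of $N$ on $T$, so no coordinate change is needed to align them.

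The second step is to verify that $T$ is a modular flat of $M(K_{r+1})$. It is a flat because the closure of a set of edges in a graphic matroid is the set of all edges lying within the connected components of the spanned subgraph, and here the subgraph $(V(K_{r+1}),T)$ has components $\{0,1,\dots,m\}$ and the singletons $\{m+1\},\dots,\{r\}$, so the closure of $T$ equals $T$ itself. It is modular by the standard description of the flats of a graphic matroid as partitions of the vertex set into connected blocks, combined with the characterization of the modular flats as those whose partition has at most one non-singleton block (see, e.g., \cite{o11}); the partition corresponding to $T$---the block $\{0,1,\dots,m\}$ together with singletons---has exactly one non-singleton block.

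With these in hand, I would apply the matrix description of the generalized parallel connection along a modular flat (see \cite[Section 11.4]{o11}). Suppose $N$ is a modular flat of $M_1$, and $M_1,M_2$ are represented over a field so that their columns indexed by $T=E(N)$ give the same representation of $N$. After bringing the matrix of $M_1$ to a block shape $\left[\begin{smallmatrix}A_N&B\\0&C\end{smallmatrix}\right]$ in which the columns on $T$ are $\left[\begin{smallmatrix}A_N\\0\end{smallmatrix}\right]$ and $A_N$ (with $r(N)$ rows) represents $N$, the matroid $P_N(M_1,M_2)$ is represented by $\left[\begin{smallmatrix}A_N&B&A_2'\\0&C&0\end{smallmatrix}\right]$, where $[\,A_N\mid A_2'\,]$ is a representation of $M_2$. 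In our case $r(M([I_m|D_m|P]))=m=r(N)$, since $I_m$ already has full row rank, so $T$ spans $M_2:=M([I_m|D_m|P])$ and here $A_2'=P$; and for $M_1:=M(K_{r+1})$ no row reduction is needed, because the $T$-columns of $[I_r|D_r]$ are already zero in rows $m+1,\dots,r$, so the required block shape is just $[I_r|D_r]$ with the $T$-columns permuted to the front and $A_N=[I_m|D_m]$. Consequently $P_N(M_1,M_2)$ is represented, up to a permutation of columns, by the matrix displayed in Definition \ref{def:universal}; in other words it is the rank-$r$ universal matroid of $\YT(P)$. As ternary matroids are uniquely $\GF(3)$-representable, the two matroids coincide.

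The step I expect to be the main obstacle is anchoring the matrix description of the generalized parallel connection to a precise citable statement and confirming its hypotheses apply here verbatim: \cite[Section 11.4]{o11} builds $P_N(M_1,M_2)$ from its lattice of flats, so the representability fact used above---that a representation of a modular flat $N$ shared by $M_1$ and $M_2$ extends, with a zero off-diagonal block, to a representation of $P_N(M_1,M_2)$---should be quoted from the appropriate proposition there, or else verified directly by checking that the glued matrix has the correct rank function, which is exactly the point where modularity of $N$ in $M_1$ is used. The remaining issues are routine bookkeeping: matching the column order of the part of $D_r$ indexing pairs inside $\{1,\dots,m\}$ with that of $D_m$, and---if one reads Definition \ref{def:universal} with the full template datum $P_0=[P|D_m]$ rather than with $P$---observing that the extra $D_m$-columns are parallel to columns already present in $D_r$, hence may be discarded without changing the matroid.
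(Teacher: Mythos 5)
The paper does not actually prove this lemma; it is imported verbatim from \cite[Lemma 9.13]{g-submitted}, so there is no in-paper argument to compare against. Your direct proof is correct and is essentially the argument one would expect that citation to contain. It rests on exactly two substantive facts, both of which you identify and apply correctly: (a) the edge set of a clique $K_{m+1}$ inside $K_{r+1}$ is a modular flat of $M(K_{r+1})$ (flats of $M(K_n)$ are vertex partitions, and the modular ones are those with at most one non-singleton block), and (b) the matrix description of the generalized parallel connection along a modular flat: if $M_1$ and $M_2$ are represented so that the columns on $T$ give the \emph{same} representation of $N$, and the $T$-columns of $M_1$'s matrix are supported on the first $r(N)$ rows, then appending the non-$T$ columns of $M_2$ padded with zeros represents $P_N(M_1,M_2)$. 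For (b) the precise reference is \cite[Proposition 11.4.15]{o11} (going back to Brylawski \cite{b75}), and its hypotheses hold here verbatim since both copies of $M(K_{m+1})$ are literally represented by $[I_m|D_m]$ and the $T$-columns of $[I_r|D_r]$ already vanish below row $m$; your closing remark that unique $\GF(3)$-representability removes any residual ambiguity is a nice touch, though not strictly needed once the representations of $N$ are arranged to coincide on the nose. Your two flagged bookkeeping points (column ordering of $D_m$ inside $D_r$, and the fact that Definition \ref{def:determined-by-P0} formally puts $P_0=[P|D_m]$ so the universal matroid acquires columns parallel to ones already in $D_r$) are both genuine but harmless, and you resolve them correctly. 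No gaps.
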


Combining \cite[Lemma 9.6]{g-submitted}, \cite[Lemma 9.9]{g-submitted}, and \cite[Lemma 9.14]{g-submitted}, we obtain the following.

\begin{lemma}
\label{lem:comp-lift}
\leavevmode
\begin{itemize}
\item[(i)] Every $Y$-template is minor equivalent to the complete, lifted $Y$-template determined by a matrix the sum of whose rows is the zero vector.
\item[(ii)] Conversely, let $\Phi$ be the complete, lifted $Y$-template determined by a matrix $P$ the sum of whose rows is the zero vector. Choose any one row of $P$. Then $\Phi$ is minor equivalent to the complete, lifted $Y$-template determined by the matrix obtained from $P$ by removing that row.
\end{itemize}
\end{lemma}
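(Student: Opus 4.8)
The plan is to obtain both parts by chaining together the normalization results for ternary frame templates from \cite{g-submitted}, so the ``proof'' is essentially a compilation. For part~(i), I would start with an arbitrary $Y$-template $\YT(P_0,P_1)$ and perform three successive moves, each of which replaces the template by a minor-equivalent one. First, since every $Y$-template is refined, Lemma~\ref{lem:lift} makes it minor equivalent to a lifted template; combining this with the observation that the lifting construction of \cite[Lemma~9.6]{g-submitted} keeps us within the world of $Y$-templates (it introduces no nontrivial $\Gamma,\Lambda,\Delta$ and no element of $C$), I get a lifted $Y$-template, which — being lifted — has $A_1[X,Y_1]$ an identity matrix, i.e.\ $P_1=[\emptyset]$. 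Second, I would invoke the completion result \cite[Lemma~9.9]{g-submitted}: adjoining to $P_0$ a copy of $D_{|X|}$ corresponds, in any conforming matrix, to adjoining elements supported entirely on the rows indexed by $X$, each with one $+1$ and one $-1$ entry; such elements can always be deleted to recover a minor, so the move enlarges $\mathcal{M}(\Phi)$ without changing $\OVM$, and lands at a complete lifted $Y$-template $\YT(P)$. Third, I would apply the row-balancing result \cite[Lemma~9.14]{g-submitted}: appending to $P$ the row $-\sum(\textrm{rows of }P)$, together with a corresponding new element of $X$, produces a matrix whose rows sum to zero and a minor-equivalent complete lifted $Y$-template. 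Composing the three moves proves~(i).

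The heart of the matter, and the step I expect to be the main obstacle, is justifying the row-balancing move; I would argue it through the generalized-parallel-connection description of the universal matroids (Lemma~\ref{lem:univ-gen-par}) together with the fact from \cite[Section~9]{g-submitted} that every simple matroid conforming to a complete lifted $Y$-template is a restriction of one of its universal matroids. Writing $m$ for the number of rows of $P$ and $P^{+}$ for the $(m{+}1)$-rowed extension above, the key computation is that contracting in $M([I_{m+1}\mid D_{m+1}\mid P^{+}])$ the element given by the last unit-vector column, and then simplifying, recovers $M([I_{m}\mid D_{m}\mid P])$. The hard part is checking that this single contraction descends through the generalized parallel connection with $M(K_{r+1})$ — whose amalgamating boundary is $M(K_{m+2})$ for $\YT(P^{+})$ and $M(K_{m+1})$ for $\YT(P)$ — so that it induces the desired minor relation between the two families of rank-$r$ universal matroids simultaneously for all $r$, and then translating this back via the restriction description of conforming matroids; once that is done, each class contains every simple member of the other as a minor, which is minor equivalence.

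For part~(ii), I would run the same argument backwards: deleting a chosen row of $P$ yields a matrix $P'$ with one fewer row, and re-appending $-\sum(\textrm{rows of }P')$, which is precisely the deleted row, returns $P$; hence $\YT(P)$ is the row-balancing extension of $\YT(P')$, and \cite[Lemma~9.14]{g-submitted} — which does not require $P'$ to have rows summing to zero — yields $\overline{\mathcal{M}(\YT(P'))}=\overline{\mathcal{M}(\YT(P))}$. The remaining verifications — that the lifting move of \cite[Lemma~9.6]{g-submitted} and the completion move of \cite[Lemma~9.9]{g-submitted} keep us within the class of $Y$-templates and do not alter the minor closure — I expect to be routine once those lemmas are in hand.
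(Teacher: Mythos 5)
Your proposal is correct and takes essentially the same route as the paper: the paper gives no argument beyond stating that the lemma follows by combining Lemmas 9.6, 9.9, and 9.14 of \cite{g-submitted}, which are exactly the three moves (lifting, completion, row-balancing) you chain together, and your reversal of the row-balancing move for part (ii) is the intended reading. Your extra paragraph attempting to re-derive the row-balancing step via generalized parallel connections is not needed once \cite[Lemma 9.14]{g-submitted} is cited, but it does not introduce any error.
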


The next lemma is an easy but useful result.

\begin{lemma}
\label{lem:submatrix}
Let \textcolor{red}{$P$} be a matrix with a submatrix \textcolor{red}{$P'$}. Every matroid conforming to \textcolor{red}{$\YT(P')$} is a minor of a matroid conforming to \textcolor{red}{$\YT(P)$}.
\end{lemma}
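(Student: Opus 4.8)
The plan is to unpack the definitions of $\YT(P)$ and of a matroid conforming to a $Y$-template, and then to exhibit a matroid conforming to $\YT(P')$ that has the given matroid conforming to $\YT(P)$ as a minor. First I would recall that, by Definition \ref{def:determined-by-P0}, $\YT(P)=\YT([P\mid D_{|X|}],[\emptyset])$, so the associated matrix $A_1$ has the form $[I\mid [\emptyset]\mid P\mid D_{|X|}]$ where the index set $X$ indexes the rows; similarly $\YT(P')$ has $A_1'=[I\mid[\emptyset]\mid P'\mid D_{|X'|}]$. Here I must be careful: $P$ and $P'$ need not have the same number of rows, since $P$ is a submatrix of $P'$. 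The cleanest way to proceed is to treat the statement in two stages, corresponding to taking a column submatrix and a row submatrix, since "submatrix" allows deleting both rows and columns.

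For the column-submatrix stage, suppose $P$ is obtained from $P'$ by deleting some columns, indexed by a set $S\subseteq Y_0'$. I would take a matroid $M$ conforming to $\YT(P)$, realize it as $M(A)/C\ba Y_1$ for a matrix $A$ conforming to $\YT(P)$ (with $C=Y_1=\emptyset$ here, so really $M=M(A)$ for $A$ respecting $\YT(P)$ up to the $Z$-modification), and then pad $A$ with the columns of $P'$ indexed by $S$ in the $X$-rows and zeros below — this yields a matrix $A'$ respecting $\YT(P')$, and $M(A')\ba S = M(A)$, so $M$ is a minor (indeed a restriction) of a matroid conforming to $\YT(P')$. For the row-submatrix stage, suppose $P$ is obtained from $P'$ by deleting some rows, say indexed by $T\subseteq X'$. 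Now the matrix $A$ respecting $\YT(P)$ lives on row set $(X'-T)\cup(\text{frame rows})$; I would build $A'$ on row set $X'\cup(\text{frame rows})$ by inserting, for each deleted row $t\in T$, a new row of $A'$ whose only nonzero entry is a $1$ in a brand-new column $z_t$ placed in the $Z$-set of $\YT(P')$ (so that column $z_t$ is a unit vector restricted to the frame part, with $0$ in the $X'$-rows), and with the $t$-rows of $P'$ and of $D_{|X'|}$ appearing in the $Y_0'$ and frame-$\Gamma$ positions — but I have to check that the resulting matrix still respects $\YT(P')$, i.e. that the $t$-rows fit the required block structure. The key simplification is that for a $Y$-template all of $\Delta$, $\Lambda$ are trivial and $C=\emptyset$, so conditions (iv) and (v) of "respects" are vacuous or automatically satisfied, and the only real constraint is that the $X'$-rows of $A'$ restricted to the columns outside $C\cup Y_0'\cup Y_1'\cup Z'$ be zero (which we arrange by not adding any new such columns in the $t$-rows) and that the frame part be a $\{1\}$-frame matrix.

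Alternatively, and perhaps more efficiently, I would combine the two stages by appealing to Lemma \ref{lem:univ-gen-par} and the universal-matroid description: every simple matroid conforming to $\YT(P)$ is a restriction of the rank-$r$ universal matroid for $\YT(P)$, which by Lemma \ref{lem:univ-gen-par} is the generalized parallel connection of $M(K_{r+1})$ and $M([I_m\mid D_m\mid P])$ along $M(K_{m+1})$. Since $P$ is a submatrix of $P'$, $M([I_m\mid D_m\mid P])$ is a minor of $M([I_{m'}\mid D_{m'}\mid P'])$ — deleting columns handles the deleted columns of $P$, and contracting the appropriate elements of $I_{m'}$ together with a routine pivoting argument handles the deleted rows — and generalized parallel connection is well-behaved under taking minors in the second factor, so the rank-$r$ universal matroid of $\YT(P)$ is a minor of a universal matroid of $\YT(P')$ (possibly of larger rank), which conforms to $\YT(P')$. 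One then only needs that it suffices to prove the claim for simple matroids conforming to $\YT(P)$, since every matroid conforming to a template has its simplification conforming as well, and simplification only removes parallel elements and loops, which does not affect the minor relation in the way we need.

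The main obstacle I anticipate is the bookkeeping in the row-deletion stage: verifying that inserting the new rows-with-unit-columns-in-$Z'$ genuinely produces a matrix respecting $\YT(P')$ rather than merely conforming, and tracking how contracting the universal matroid (or the $I_{m'}$ columns) interacts with the generalized parallel connection so that what is deleted or contracted really does correspond to passing from $P'$ to its submatrix $P$. Neither step is deep, but getting the index sets $X$, $Y_0$, $Z$ to line up correctly through a generalized-parallel-connection minor computation is where the care is needed; the frame-template axioms being mostly trivial for $Y$-templates is what makes it go through.
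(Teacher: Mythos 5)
The paper offers no proof of this lemma at all---it is introduced only as ``an easy but useful result''---so there is no argument of the authors' to compare against; your proposal has to stand on its own. Your overall strategy (split ``submatrix'' into column deletion and row deletion; handle columns by padding with the extra columns of $P'$ and then deleting them; handle rows by introducing unit columns in $Z$ and contracting them) is the right one and can be made to work. One small slip first: for $\YT(P)=\YT([P\mid D_{|X|}],[\emptyset])$ the set $Y_1$ is \emph{not} empty---it indexes the columns of $I_{|X|}$---so a conforming matroid is $M(A)\ba Y_1$ rather than $M(A)$. This is harmless in the column stage, since deleting $Y_1$ commutes with your padding, but it matters for how the row stage must be set up.

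The genuine gap is in the row-deletion stage, and you correctly flag it as the obstacle without resolving it; as written, the construction is internally inconsistent and the padded matrix does not respect $\YT(P')$. You cannot give the new row $t\in T\subseteq X'$ ``only'' a nonzero entry sitting in a column $z_t\in Z$: condition (iii) of respecting forces $A'[X',Z]=0$, and condition (ii) forces row $t$ to carry the $t$-th rows of $I_{|X'|}$, $P'$ and $D_{|X'|}$ in the $Y_1'\cup Y_0'$ columns, so both halves of your description contradict each other and the definition. Moreover, if $z_t$ really were a unit vector supported in the frame rows with $0$ in the $X'$-rows, contracting it would remove a frame row, not the row $t$. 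The repair is precisely the respecting-versus-conforming mechanism of Lemma \ref{lem:contract}: in the \emph{respecting} matrix let the column $z_t$ be the zero column (which is permitted in $Z$), and then pass to the \emph{conforming} matrix by adding to $z_t$ the $Y_1'$-column $e_t$, so that $z_t$ becomes the unit vector in row $t$; contracting $\{z_t : t\in T\}$ then deletes the rows $T$, after which deleting $Y_1'$ and the superfluous columns of $P'$ and of $D_{|X'|}$ recovers exactly your conforming matrix for $\YT(P)$. Your alternative route through universal matroids and Lemma \ref{lem:univ-gen-par} has a separate, unclosed gap: it only addresses simple conforming matroids, and knowing that the simplification of $M$ is a minor of something conforming to $\YT(P')$ does not by itself yield the stated conclusion that $M$ is; the direct construction, once repaired as above, avoids this issue entirely.
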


We use the next lemma to prove the excluded minor characterizations in Theorems \ref{thm:dyadic-exc-minor} and \ref{thm:near-reg-sixth-root}; it is obtained by combining Lemmas 8.1 and 8.2 of \cite{g-submitted}.

\begin{lemma}
\label{lem:connected-excluded-minors}
Let $\mathcal{M}$ be a minor-closed class of $\mathbb{F}$-representable matroids, where $\bFp$ is the prime subfield of $\F$. Let $\mathcal{E}_1$ and $\mathcal{E}_2$ be two sets of $\mathbb{F}$-representable matroids such that
\begin{itemize}
\item[(i)] no member of $\mathcal{E}_1\cup\mathcal{E}_2$ is contained in $\mathcal{M}$,
\item[(ii)] some member of $\mathcal{E}_1$ is $\bFp$-representable,
\item[(iii)] for every refined frame template $\Phi$ over $\F$ such that $\mathcal{M}(\Phi)\nsubseteq\mathcal{M}$, there is a member of $\mathcal{E}_1$ that is a minor of a matroid conforming to $\Phi$, and
\item[(iv)] for every refined frame template $\Psi$ over $\F$ such that $\mathcal{M}^*(\Psi)\nsubseteq\mathcal{M}$, there is a member of $\mathcal{E}_2$ that is a minor of a matroid coconforming to $\Psi$.
\end{itemize}
Suppose Hypothesis \ref{hyp:connected-template} holds; there exists $k\in\mathbb{Z}_+$ such that a $k$-connected $\F$-representable matroid with at least $2k$ elements is contained in $\mathcal{M}$ if and only if it contains no minor isomorphic to one of the matroids in the set $\mathcal{E}_1\cup\mathcal{E}_2$.

Moreover, suppose Hypothesis \ref{hyp:clique-template} holds; there exist $k,n\in\mathbb{Z}_+$ such that a vertically $k$-connected $\F$-representable matroid with an $M(K_n)$-minor is contained in $\mathcal{M}$ if and only if it contains no minor isomorphic to one of the matroids in $\mathcal{E}_1$.
\end{lemma}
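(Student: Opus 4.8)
The ``only if'' directions are immediate: if $M\in\mathcal{M}$ then, since $\mathcal{M}$ is minor-closed, every minor of $M$ lies in $\mathcal{M}$, whereas by (i) no member of $\mathcal{E}_1\cup\mathcal{E}_2$ lies in $\mathcal{M}$; hence $M$ has no minor isomorphic to such a matroid. This uses neither the connectivity hypotheses nor high connectivity, so the work is entirely in the ``if'' directions.

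For the ``if'' direction of the first statement, the plan is to apply Hypothesis \ref{hyp:connected-template} not to $\mathcal{M}$ but to the minor-closed class $\mathcal{N}$ of $\F$-representable matroids having no minor isomorphic to a member of $\mathcal{E}_1\cup\mathcal{E}_2$; by (i), $\mathcal{M}\subseteq\mathcal{N}$. First I would use (ii) to fix a member $N_0$ of $\mathcal{E}_1$ that is $\bFp$-representable; then $N_0$ is a minor of $\PG(m-1,\bFp)$ for some $m$, so $\PG(m-1,\bFp)\notin\mathcal{N}$ and hence no matroid in $\mathcal{N}$ has a $\PG(m-1,\bFp)$-minor. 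Applying Hypothesis \ref{hyp:connected-template} to $\mathcal{N}$ with this $m$, and invoking Theorem \ref{thm:Y1spanning} to take the resulting templates $\Phi_1,\dots,\Phi_s,\Psi_1,\dots,\Psi_t$ to be refined, produces a constant $k$. The heart of the argument is that $\mathcal{M}(\Phi_i)\subseteq\mathcal{M}$ for each $i$ and $\mathcal{M}^*(\Psi_j)\subseteq\mathcal{M}$ for each $j$: by parts (1)--(2) of the hypothesis these classes lie in $\mathcal{N}$, hence so do their minor-closures since $\mathcal{N}$ is minor-closed; were some $\mathcal{M}(\Phi_i)$ not contained in $\mathcal{M}$, then, $\Phi_i$ being a refined frame template over $\F$, condition (iii) would produce a member of $\mathcal{E}_1$ that is a minor of a matroid conforming to $\Phi_i$ and hence lies in $\mathcal{N}$, contradicting the definition of $\mathcal{N}$; the $\Psi_j$ case is identical, using condition (iv) and $\mathcal{E}_2$. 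Finally, given a $k$-connected $\F$-representable matroid $M$ with at least $2k$ elements and no minor in $\mathcal{E}_1\cup\mathcal{E}_2$, we have $M\in\mathcal{N}$; after replacing $k$ by $\max\{k,3\}$ we may assume $M$ is simple, and $M$ has no $\PG(m-1,\bFp)$-minor, so part (3) of the hypothesis places $M$ in some $\mathcal{M}(\Phi_i)$ or $M^*$ in some $\mathcal{M}(\Psi_j)$; either way $M\in\mathcal{M}$.

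The ``moreover'' statement would be proved by the same template, using Hypothesis \ref{hyp:clique-template} in place of Hypothesis \ref{hyp:connected-template} and the minor-closed class $\mathcal{N}_1$ of $\F$-representable matroids with no minor isomorphic to a member of $\mathcal{E}_1$; here $\mathcal{E}_2$ is not needed, since part (3) of Hypothesis \ref{hyp:clique-template} involves only the templates $\Phi_i$ and hence only condition (iii). A general vertically $k$-connected $M$ is first replaced by its simplification, which has the same vertical connectivity and retains an $M(K_n)$-minor, reducing to the simple case. The substantive input is entirely imported from Hypotheses \ref{hyp:connected-template}--\ref{hyp:clique-template} and Theorem \ref{thm:Y1spanning}, so the only real obstacle is organisational: one must run the structure theorem on $\mathcal{N}$ (respectively $\mathcal{N}_1$) rather than on $\mathcal{M}$, and then use conditions (iii)--(iv) to pull each template class back inside $\mathcal{M}$. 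Getting the containment chain $\mathcal{M}(\Phi_i)\subseteq\overline{\mathcal{M}(\Phi_i)}\subseteq\mathcal{N}$ right, together with the automatic exclusion of a $\PG(m-1,\bFp)$-minor for members of $\mathcal{N}$, is what makes the argument go through.
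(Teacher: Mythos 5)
Your overall strategy is correct, and it is essentially the argument behind the source the paper cites for this lemma (Lemmas 8.1 and 8.2 of \cite{g-submitted}, which the paper does not reprove): apply the Hypothesis to the excluded-minor class $\mathcal{N}$ (resp.\ $\mathcal{N}_1$) rather than to $\mathcal{M}$, use (ii) to produce the $\PG(m-1,\bFp)$-exclusion needed to invoke part (3), take the templates refined via Theorem \ref{thm:Y1spanning}, and then use (iii)--(iv) together with parts (1)--(2) of the Hypothesis to force each class $\mathcal{M}(\Phi_i)$ and $\mathcal{M}^*(\Psi_j)$ back inside $\mathcal{M}$. The chain $\mathcal{M}(\Phi_i)\subseteq\overline{\mathcal{M}(\Phi_i)}\subseteq\mathcal{N}$ followed by ``otherwise a member of $\mathcal{E}_1$ would lie in $\mathcal{N}$'' is exactly right, as is handling simplicity in the first statement by enlarging $k$ to at least $3$.

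There is one step in the ``moreover'' part that you assert but do not justify: replacing a non-simple vertically $k$-connected matroid $M$ by $\mathrm{si}(M)$ and claiming this ``reduces to the simple case.'' That reduction yields $\mathrm{si}(M)\in\mathcal{M}$, whereas the conclusion you need is $M\in\mathcal{M}$; passing back requires $\mathcal{M}$ to be closed under adding loops and parallel elements, which is not among the stated hypotheses (minor-closedness gives only the reverse implication). In every application in this paper ($\mathcal{M}$ the class of dyadic or of $\sqrt[6]{1}$-matroids) this closure holds trivially, and loops and parallel copies of frame-part columns can be absorbed directly into a conforming matrix, but parallel copies of $Y_0$-elements are not obviously absorbable, so as written this step is a genuine gap. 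Either add the (harmless, for the uses made here) hypothesis that $\mathcal{M}$ is closed under parallel and loop extensions, or restrict the ``moreover'' conclusion to simple matroids; with that repair the rest of your argument goes through.
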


The remaining lemmas in this section have not appeared previously, but they will be useful in Section \ref{sec:dyadic}. Recall from Definition \ref{def:lifted} that every lifted template is refined and therefore reduced. Thus, a lifted template has a reduction partition as in Definition \ref{def:reduced}.

\begin{lemma}
\label{lem:contract}
Let $\Phi=(\Gamma,C,X,Y_0,Y_1,A_1,\Delta,\Lambda)$ be a lifted template with reduction partition $X=X_0\cup X_1$. Let $P=A_1[X_1,Y_0]$, and let $S$ be any $\Gamma$-frame matrix with $|X_1|$ rows. Then $M([S|P])\in\OVM$.
\end{lemma}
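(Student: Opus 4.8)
The plan is to exhibit, for an arbitrary $\Gamma$-frame matrix $S$ with $|X_1|$ rows, a matroid conforming to $\Phi$ that has $M([S\,|\,P])$ as a minor; since $\overline{\mathcal{M}(\Phi)}$ is closed under minors, this gives $M([S\,|\,P])\in\overline{\mathcal{M}(\Phi)}$. First I would recall the shape of a matrix $A'$ respecting $\Phi$: its rows are indexed by $B=X\cup(B-X)$ with $X=X_0\cup X_1$, and the block $A'[B-X,\,E-(C\cup Y_0\cup Y_1\cup Z)]$ may be any $\Gamma$-frame matrix, while $A'[X_0,Y_1]=0$, $A'[X_1,Y_1]=I$, and $A'[X_1,Y_0]=P$ because $\Phi$ is lifted. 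The idea is to place $S$ inside the $\Gamma$-frame block using rows that will end up ``playing the role'' of $X_1$ after we contract.

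The key trick is to use the columns of $Y_1$ together with condition (ii) in the definition of conforming (the $Z$-columns, each of which is the sum of a $Z$-column of $A'$ and a $Y_1$-column of $A'$). Concretely: take $B-X$ to contain a copy $X_1'$ of $X_1$, of size $|X_1|$; let the $\Gamma$-frame block on columns outside $C\cup Y_0\cup Y_1\cup Z$ restricted to rows $X_1'$ be exactly $S$, and let it be $0$ on all other rows of $B-X$; and on the remaining rows $X$ let these columns be $0$ (which is permissible since $0\in\Lambda$). For the $Y_0$-columns we already have $A'[X_1,Y_0]=P$; choose the rows of $\Delta$ appropriately (e.g. take $A'[B-X,C\cup Y_0\cup Y_1]=0$, legal since $0\in\Delta$). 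Now form $A$ conforming to $\Phi$: with a suitable set $Z$ of ``duplicate'' columns, condition (ii) lets each new column be the sum of a zero $Z$-column and the corresponding $Y_1$-column of $A'$; since $A'[X_1,Y_1]=I$ and $A'[X_0\cup(B-X),Y_1]=0$, these $Y_1$-columns form an identity on rows $X_1$ and zero elsewhere. After passing to the matroid $M(A)/C\backslash Y_1$ — with $C=\emptyset$ or with the $C$-block chosen trivially so contraction of $C$ is innocuous — we contract the ``identity on $X_1$'' columns coming from $Z$; this row-reduces, eliminating the rows $X_1$ and leaving, on rows $X_1'\cup(\text{the rest})$, the matrix whose nonzero part is $[\,S\,|\,P\,]$ on the $X_1'$ rows and zeros below. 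Deleting the (now all-zero) extra rows and the unused columns yields exactly $M([S\,|\,P])$ as a minor of a member of $\mathcal{M}(\Phi)$.

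The main obstacle I anticipate is bookkeeping: making sure that the chosen $Z$, the duplicated rows $X_1'$, and the identification of which columns survive the contraction $/C$ and the deletion $\backslash Y_1$ are all mutually consistent, and in particular that putting an arbitrary $\Gamma$-frame matrix $S$ into the frame block does not violate conditions (iii)--(v) of ``respects'' (it does not: $0\in\Lambda$ handles the $X$-rows, $0\in\Delta$ handles the $\Delta$-rows, and $S$ itself is $\Gamma$-frame by hypothesis). A second point requiring a little care is the role of $X_0$: since $\Phi$ is lifted, $A_1[X_0,Y_1]=0$, so the $Y_1$-columns are supported only on $X_1$, which is exactly what makes the contraction clean; one should note that $A_1[X_0,Y_0]$ may be nonzero, but after contracting the $X_1$-rows and deleting the superfluous columns those entries lie outside the surviving submatrix $[\,S\,|\,P\,]$, so they cause no harm. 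Once these consistency checks are dispatched, the conclusion $M([S\,|\,P])\in\overline{\mathcal{M}(\Phi)}$ is immediate.
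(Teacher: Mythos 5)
Your overall strategy matches the paper's (build an explicit conforming matrix with $S$ in the $\Gamma$-frame block and contract to recover $[S|P]$), but the specific choice you make for the $Z$-columns breaks the argument. You take the $Z$-columns of the respecting matrix $A'$ to be \emph{zero}, so that each $Z$-column of the conforming matrix $A$ is just a copy of a $Y_1$-column, i.e.\ a unit vector supported on a single row of $X_1$ and zero on your new rows $X_1'$. Contracting such a column simply deletes the corresponding row of $X_1$ and performs no row operation on $X_1'$; since $P=A_1[X_1,Y_0]$ lives precisely in the rows being deleted, the $Y_0$-columns of the minor become zero on $X_1'$, and what survives on the rows $X_1'$ is $[S\,|\,0]$, not $[S\,|\,P]$. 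The whole point of the $Z$-mechanism is to \emph{identify} the rows $X_1$ with the new frame rows $X_1'$, and that requires each $Z$-column of $A$ to have two nonzero entries: take $A'[B-X,Z]$ to be the identity on $X_1'$ (unit columns are permitted there), so that the $i$-th column of $A[\,\cdot\,,Z]$ is $e_{x_i}+e_{x_i'}$; contracting $Z$ then pivots the $X_1$-rows into the $X_1'$-rows, depositing $-P$ (harmless, since negating the $Y_0$-columns is a column scaling) next to $S$. This is exactly what the paper's displayed matrix does.

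A second, smaller gap: you cannot ``delete the extra rows.'' If $A_1[X_0,Y_0]\neq 0$, the $Y_0$-columns of your minor retain nonzero entries in the rows $X_0$, and these genuinely change the represented matroid (they can destroy dependencies present in $P$ alone). The paper eliminates the $X_0$-rows by including a block of unit columns $I_{|X_0|}$ supported on $X_0$ (legal because $\F_p^{X_0}\subseteq\Lambda|X_0$ and $\Lambda|X_1=\{0\}$) and contracting them as well. With these two corrections --- unit $Z$-columns in $A'[B-X,Z]$ matched to $X_1'$, and an extra contracted block killing $X_0$ --- your construction becomes the paper's proof.
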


\begin{proof}
Let $R=A_1[X_0,Y_0\cup C]$. Note that the following matrix conforms to $\Phi$, since $\F_p^{X_0}\subseteq\Lambda|X_0$ in a reduced template and since the zero vector is an element of $\Lambda$ and $\Delta$.

\begin{center}
\begin{tabular}{r|c|c|c|c|c|}
\multicolumn{1}{c}{}&\multicolumn{1}{c}{$H_1$}&\multicolumn{1}{c}{$H_2$}&\multicolumn{1}{c}{$Z$}&\multicolumn{1}{c}{$Y_0$}&\multicolumn{1}{c}{$C$}\\
\cline{2-6}
$X_0$&$I_{|X_0|}$&$0$&$0$&\multicolumn{2}{c|}{$R$}\\
\cline{2-6}
$X_1$&$0$&$0$&$I_{|X_1|}$&$P$&$0$\\
\cline{2-6}
&$0$&$S$&$I_{|X_1|}$&$0$&$0$\\
\cline{2-6}
\end{tabular}
\end{center}
By contracting $H_1\cup Z\cup C$ (recalling that $C$ must be contracted to obtain a matroid that conforms to $\Phi$), we obtain the desired matroid.
\end{proof}

Finally, we have a lemma about refined templates.

\begin{lemma}
\label{lem:equiv-refined}
Suppose $\Phi=(\Gamma,C,X,Y_0,Y_1,A_1,\Delta,\Lambda)$ is a refined template strongly equivalent to $\Phi'=(\Gamma',C',X',Y_0',Y_1',A_1',\Delta',\Lambda')$ with $\Delta'$ and $\Lambda'$ trivial and $C'=\emptyset$. Then $\Phi'$ is refined.
\end{lemma}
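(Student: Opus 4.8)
The plan is to unwind what "refined" means for $\Phi$ and transport the relevant data along the strong equivalence to $\Phi'$. Recall (Definitions \ref{def:reduced} and \ref{def:refined}) that being refined requires: $\Phi$ is reduced, with reduction partition $X=X_0\cup X_1$, and that $Y_1$ spans $M(A_1[X_1,Y_0\cup Y_1])$. For $\Phi'$, since $\Delta'$ and $\Lambda'$ are trivial and $C'=\emptyset$, the definition of reduced collapses dramatically: we may take the reduction partition to be $X'=X_0'\cup X_1'$ with $X_0'=\emptyset$ and $X_1'=X'$. Indeed, with $\Delta'$ trivial the condition $\Delta'=\Gamma'(\F_p^{C'}\times\Delta'')$ holds with $C'=\emptyset$ and $\Delta''$ trivial; with $\Lambda'$ trivial we have $\Lambda'|X_0'=\{0\}$ and $\Lambda'|X_1'=\{0\}$ automatically; $A_1'[X_1',C']=0$ is vacuous since $C'=\emptyset$; and the rows of $A_1'[X_1',C'\cup Y_0'\cup Y_1']=A_1'[X',Y_0'\cup Y_1']$ trivially span a subspace whose additive group is skew to $\Delta'=\{0\}$. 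So $\Phi'$ is automatically reduced with $X_0'=\emptyset$. It then remains only to check the "refined" condition: that $Y_1'$ spans $M(A_1'[X_1',Y_0'\cup Y_1'])=M(A_1'[X',Y_0'\cup Y_1'])$.

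The key point is that strong equivalence, $\mathcal{M}(\Phi)=\mathcal{M}(\Phi')$, forces a relationship between the "$Y$-part" data of the two templates, and the spanning condition for $\Phi$ should push through. First I would argue that, because $\Phi$ is refined, $\overline{\mathcal{M}(\Phi)}$ contains (or is governed by) a complete, lifted $Y$-template determined by the matrix $P=A_1[X_1,Y_0]$, via the machinery of Lemmas \ref{lem:contract}, \ref{lem:reduce-to-Y-template}, and \ref{lem:lift}; more directly, the universal-matroid description (Definition \ref{def:universal}, and the fact quoted after it that every simple matroid conforming to $\YT(P)$ is a restriction of a universal matroid) pins down $\overline{\mathcal{M}(\Phi')}$ in terms of $A_1'[X',Y_0']$. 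Since $\Lambda'$, $\Delta'$ are trivial and $C'=\emptyset$, a matrix conforming to $\Phi'$ has the block form of a $\Gamma'$-frame matrix atop which sit columns from $\Lambda'=\{0\}$ in rows $X'$ together with $A_1'$ in columns $Y_0'\cup Y_1'$; after contracting $C'=\emptyset$ and deleting $Y_1'$, the relevant structural matrix is essentially $[\,\Gamma'\text{-frame}\mid A_1'[X',Y_0']\,]$ with the convention that deleting $Y_1'$ removes those columns. The condition that $Y_1'$ spans $M(A_1'[X',Y_0'\cup Y_1'])$ is precisely the condition needed for the deletion of $Y_1'$ not to drop rank below $|X'|$, i.e.\ for the $X'$ rows to remain meaningful — and if it failed, one could find a row of $A_1'$ not in the span of the others, producing matroids in $\mathcal{M}(\Phi')$ of a shape not attainable in $\mathcal{M}(\Phi)$ (whose $X_1$-rows are spanned by $Y_1$ by refinement of $\Phi$), contradicting $\mathcal{M}(\Phi)=\mathcal{M}(\Phi')$.

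Concretely, I would proceed as follows. (1) Record that $\Phi'$ is reduced with $X_0'=\emptyset$, as above. (2) Suppose for contradiction that $Y_1'$ does not span $M(A_1'[X',Y_0'\cup Y_1'])$; then there is a subset of rows of $A_1'[X',Y_0'\cup Y_1']$, restricted to the columns $Y_1'$, that is not of full row rank, so after a change of basis some row $x\in X'$ has $A_1'[\{x\},Y_1']=0$ while $A_1'[\{x\},Y_0']\neq 0$ (or else the whole row is zero, an even easier case, handled by noting a zero row of $A_1'$ on all of $Y_0'\cup Y_1'$ contributes nothing and can be deleted, but formally we must still match the reduction-partition bookkeeping). (3) Build an explicit matrix conforming to $\Phi'$ exhibiting this extra coordinate — e.g.\ take the $\Gamma'$-frame block to be $[I|D]$ so that $M(A)/C'\backslash Y_1'$ is a universal-type matroid with an honest $x$-coordinate — and show that the resulting matroid cannot conform to $\Phi$, because every matroid in $\mathcal{M}(\Phi)$ has its $X_1$-rows expressed through $Y_1$ (refinement) and its $X_0$-rows absorbed into $\Lambda$, so the rank contribution of the "extra" row cannot appear. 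This contradicts strong equivalence. (4) Conclude $Y_1'$ spans $M(A_1'[X',Y_0'\cup Y_1'])$, so $\Phi'$ is refined.

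The main obstacle I anticipate is step (3): making the contradiction airtight requires care in translating "a conforming matrix of $\Phi'$ has an extra independent row" into a concrete matroid invariant that provably cannot be realized by any conforming matrix of $\Phi$ — one has to rule out the possibility that the apparent extra row of $\Phi'$ is an artifact that collapses after the operations $M(A)/C\backslash Y_1$ and simplification, and one must handle the bookkeeping of which rows of $A_1'$ lie in $X_0'$ versus $X_1'$ (here forced to $X_1'=X'$) against the corresponding split for $\Phi$. It may be cleaner to avoid the contradiction entirely and instead argue directly: use Lemma \ref{lem:reduce-to-Y-template} (or the universal-matroid correspondence) to identify $\overline{\mathcal{M}(\Phi)}$ with the minor-closure of a $\YT(P)$-class, observe that strong equivalence implies $\mathcal{M}(\Phi')$ equals that same class on the nose, and then read off from the form of $\Phi'$ — trivial $\Lambda',\Delta'$, empty $C'$ — that the only way for the conforming matrices of $\Phi'$ to reproduce exactly $\mathcal{M}(\Phi)$ (not just up to minors) is for $A_1'[X',Y_1']$ to have full row rank $|X'|$, which is the spanning condition. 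Either route should work, and I expect the direct route to be shorter if the universal-matroid correspondence is available in the strength stated after Definition \ref{def:universal}.
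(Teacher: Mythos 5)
Your setup---observing that $\Phi'$ is reduced with $X_0'=\emptyset$ and reducing the problem to the spanning condition for $Y_1'$---matches the paper, but the heart of the argument, your step (3), is exactly the part you leave open, and the invariant you are missing is a concrete one: cocircuit size. If $Y_1'$ fails to span $M(A_1'[X',Y_0'\cup Y_1'])$, then some nonzero linear combination of the rows indexed by $X'$ vanishes on $Y_1'$; since $\Lambda'$ is trivial and the $Z$-columns are zero in the rows $X'$, that combination vanishes on every column of a conforming matrix outside $Y_0'$, so \emph{every} matroid conforming to $\Phi'$ has a cocircuit contained in $Y_0'$, hence of size at most $|Y_0'|$. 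The contradiction then comes from exhibiting a single matroid conforming to $\Phi$ with no such cocircuit: take the $\Gamma$-frame part to be $[I_k|D_k]$ with $k>|Y_0'|$, choose the zero vector for every element drawn from $\Lambda$ and $\Delta$, and use that $\Phi$ \emph{is} refined to see that every cocircuit of the resulting matroid has size greater than $|Y_0'|$. Note the direction of this argument: non-refinement of $\Phi'$ imposes a universal constraint on all of $\mathcal{M}(\Phi')$, which one member of $\mathcal{M}(\Phi)$ violates. Your step (3) aims the other way---producing a matroid in $\mathcal{M}(\Phi')$ ``of a shape not attainable in $\mathcal{M}(\Phi)$''---which is both harder (you must rule out every conforming matrix of $\Phi$) and not the natural consequence of the hypothesis: a deficient $Y_1'$ makes $\mathcal{M}(\Phi')$ \emph{more} constrained, not richer.

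Your alternative ``direct route'' does not close the gap either: Lemma \ref{lem:reduce-to-Y-template} carries hypotheses about the templates $\Phi_X,\Phi_C,\ldots$ that are not available here; the universal-matroid description applies to complete lifted $Y$-templates rather than to $\Phi'$ itself; and ``$A_1'[X',Y_1']$ has full row rank $|X'|$'' is not quite the refinement condition, which asks that the columns $Y_1'$ span the column space of $A_1'[X',Y_0'\cup Y_1']$. So the proposal correctly frames the problem but lacks the one idea---the bounded cocircuit contained in $Y_0'$ versus arbitrarily large cocircuits realizable under $\Phi$---that makes the contradiction go through.
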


\begin{proof}
Suppose for a contradiction that $\Phi'$ is not refined. Then the set $Y_1'$ does not span the matroid $M(A_1'[X',Y_0'\cup Y_1'])$. Since $Y_0'\subseteq E(M)$ for every matroid $M$ conforming to $\Phi'$, this implies that every matroid conforming to $\Phi'$ has a cocircuit of size at most $|Y_0'|$. Therefore, since $\Phi$ and $\Phi'$ are strongly equivalent, every matroid conforming to $\Phi$ has a cocircuit of size at most $|Y_0'|$.

However, we will construct a matroid conforming to $\Phi$ containing no such cocircuit. We construct a matrix respecting $\Phi$ so that the $\Gamma$-frame matrix is $[I_k|D_k]$, where $k>|Y_0'|$, and so that every \textcolor{red}{vector chosen} from $\Delta$ and $\Lambda$ is the zero vector. In that case, contraction of $C$ has no effect on the $\Gamma$-frame matrix. Since $\Phi$ is refined, the set $Y_1$ \textcolor{red}{spans} the matroid $M(A_1[X,Y_0\cup Y_1])$. Therefore, the vector matroid of the following matrix conforms to $\Phi$, where $[I|P_1]$ has columns indexed by $Y_1$, where $P_0$ has columns indexed by $Y_0$, and where  $[I|P_1|P_0]$ has rows indexed by the elements of $X$ remaining after $C$ is contracted.
\begin{center}
\begin{tabular}{|c|c|c|c|c|c|c|c|c|c|c|}
\hline
$0$&$0$&$I$&$I$&$\cdots$&$I$&$P_1$&$P_1$&$\cdots$&$P_1$&$P_0$\\
\hline
\multirow{3}{*}{$I_k$}&\multirow{3}{*}{$D_k$}&\multirow{3}{*}{$0$}&$1\cdots1$&&&\multirow{3}{*}{$0$}&$1\cdots1$&&&\multirow{3}{*}{$0$}\\
&&&&$\ddots$&&&&$\ddots$&&\\
&&&&&$1\cdots1$&&&&$1\cdots1$&\\
\hline
\end{tabular}
\end{center}
Since $k>|Y_0'|$, every cocircuit of this matroid has size greater than $|Y_0'|$. 
\end{proof}

\section{Dyadic Matroids}
\label{sec:dyadic}

In this section, we characterize the highly connected dyadic matroids by proving Theorems \ref{thm:dyadic} and \ref{thm:dyadic-exc-minor}. First, we will need to define the families of matroids $\Pi_r$, $\Sigma_r$, and $\Omega_r$ and to prove several lemmas that build on the machinery of Section \ref{sec:Frame Templates}.

\begin{definition}
\label{def:T-matrices}
Let \begin{center}
$T_1=\begin{bmatrix}
-1&1&0\\
-1&1&0\\
1&0&1\\
1&0&1\\
\end{bmatrix}$,
$T_2=\begin{bmatrix}
-1&1&1\\
1&-1&1\\
1&1&-1\\
\end{bmatrix}$, and
$T_3=\begin{bmatrix}
-1&-1&0\\
-1&-1&0\\
1&0&-1\\
1&0&-1\\
0&1&1\\
\end{bmatrix}$.
\end{center} For $r\geq4$ (respectively $3$, $5$), we define $\Pi_r$ (respectively $\Sigma_r$, $\Omega_r$) to be the rank-$r$ universal matroid for $\YT(P)$, where $P=T_1$ (respectively $T_2$, $T_3$).
\end{definition}

Consider the rank-$3$ ternary Dowling geometry $Q_3(\GF(3)^{\times})$. This matroid contains a restriction isomorphic to $M(K_4)$, with the signed-graphic representation given in Figure \begin{figure}[ht]
\[\begin{tikzpicture}[x=1.3cm, y=1cm,
    every edge/.style={
        draw
        }
]
	\vertex[fill] (1) at (0,0) {};
	\vertex[fill] (2) at (2,0) {};
	\vertex[fill] (3) at (1,2) {};
	\path
		(1) edge[bend right=20, line width=2.0pt] (2)
		(2) edge[bend right=20] (1)
		(3) edge[bend right=20] (2)
		(2) edge[bend right=20, line width=2.0pt] (3)
		(1) edge[bend right=20] (3)
		(3) edge[bend right=20, line width=2.0pt] (1)
	;
\end{tikzpicture}\]
\caption{A signed-graphic representation of $M(K_4)$}
  \label{fig:M(K_4)}
\end{figure} \ref{fig:M(K_4)}, with negative edges printed in bold. This representation of $M(K_4)$ has been encountered before, for example in \cite{z90,g90,sq07,gvz18}. The Dowling geometry $Q_3(\GF(3)^{\times})$ can also be represented by the matrix $[I_3|D_3|T_2]$,  with the columns of $T_2$ representing the joints of the Dowling geometry.

By Lemma \ref{lem:univ-gen-par}, the matroid $\Sigma_r$ is the generalized parallel connection of the complete graphic matroid $M(K_{r+1})$ with $Q_3(\GF(3)^{\times})$ along a common restriction isomorphic to $M(K_4)$, where the restriction of $Q_3(\GF(3)^{\times})$ has the signed-graphic representation given in Figure \ref{fig:M(K_4)}. (The restriction isomorphic to $M(K_4)$ is a modular flat of $M(K_{r+1})$, which is uniquely representable over any field. Therefore, this generalized parallel connection is well-defined.)

\begin{lemma}
The matroids $\Pi_r$, $\Sigma_r$, and $\Omega_r$ are dyadic for every integer $r$ for which they are defined.
\end{lemma}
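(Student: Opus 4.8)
The plan is to exhibit, for each of the three families, an explicit dyadic representation and to do so by working directly with the universal-matroid description from Definition~\ref{def:universal}. By Lemma~\ref{lem:univ-gen-par}, the rank-$r$ universal matroid for $\YT(P)$, with $P\in\{T_1,T_2,T_3\}$, is the generalized parallel connection of $M(K_{r+1})$ with $M([I_m\mid D_m\mid P])$ along $M(K_{m+1})$, where $m$ is the number of rows of $P$ (so $m=4,3,5$ respectively). Since the class of dyadic matroids is closed under generalized parallel connection along a modular flat that is common to both parts (this is the content of the fact that dyadic matroids, being exactly the matroids representable over $\GF(3)$ and $\GF(5)$, are closed under this operation; alternatively one can cite that the generalized parallel connection of two $\mathbb{F}$-representable matroids along a modular flat is $\mathbb{F}$-representable), and since $M(K_{r+1})$ is regular, hence dyadic, it suffices to verify that each matroid $M([I_m\mid D_m\mid P])$ is dyadic, and moreover that the common restriction $M(K_{m+1})$ sits inside it as a modular flat with a compatible representation.

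First I would set up the representation over $\mathbb{Q}$: take the matrix $[I_r \mid D_r \mid P']$, where $P'$ is $P$ with $r-m$ extra zero rows appended (matching Definition~\ref{def:universal}), and argue that every nonzero subdeterminant is $\pm 2^i$. The $[I_r\mid D_r]$ block is the incidence-type matrix of $K_{r+1}$ and is totally unimodular, so all its subdeterminants are $0,\pm1$. The work is therefore localized to subdeterminants that involve columns of $P'$. Because $P'$ has nonzero entries only in $m\leq 5$ rows, any such subdeterminant expands (via Laplace expansion along the rows outside the support of $P'$, which meet only the $[I_r\mid D_r]$ part) into a product of a $\{0,\pm1\}$-minor of $[I_r\mid D_r]$ and a subdeterminant of the small matrix $[I_m\mid D_m\mid P]$. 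So the whole claim reduces to the finite check that $M([I_4\mid D_4\mid T_1])$, $M([I_3\mid D_3\mid T_2])$, and $M([I_5\mid D_5\mid T_3])$ are dyadic, i.e.\ that every subdeterminant of each of these three explicit small integer matrices is $0$ or $\pm 2^i$. For $T_2$ this is essentially already recorded in the text: $[I_3\mid D_3\mid T_2]$ represents the rank-$3$ ternary Dowling geometry $Q_3(\GF(3)^\times)$, which is signed-graphic and hence dyadic. For $T_1$ and $T_3$ I would either do the determinant check by hand (each is a bounded computation) or invoke the SageMath verification already alluded to in the introduction; one convenient route is to observe that $[I_m\mid D_m\mid P]$ is again frame-like after row operations, or to exhibit it directly as signed-graphic, in which case dyadicity is immediate from the cited fact that every signed-graphic matroid is dyadic.

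The remaining point is to make sure the gluing is legitimate: the generalized parallel connection requires that the $M(K_{m+1})$-restriction be a modular flat of $M(K_{r+1})$ — which it is, since in a graphic matroid $M(K_{r+1})$ the cycle matroid of a complete subgraph on $m+1$ of the vertices is a modular flat — and that both parts induce the same $\GF(3)$-representation of $M(K_{m+1})$, which holds because graphic (indeed regular) matroids are uniquely representable over every field, exactly as the text already argues for $\Sigma_r$. Then dyadicity of the glued matroid follows from dyadicity of the two pieces together with closure of $\GF(3)$- and $\GF(5)$-representability under generalized parallel connection along a modular flat.

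The main obstacle I expect is not conceptual but bookkeeping: one must be careful that the ``every nonzero subdeterminant is $\pm 2^i$'' condition is genuinely preserved when passing from the small blocks to the full matrix $[I_r\mid D_r\mid P']$ and then through the generalized parallel connection, since unlike mere $\GF(3)$-or-$\GF(5)$-representability, the dyadic condition is about a specific representing matrix. The cleanest way to sidestep this is to lean on Whittle's characterization (dyadic $=$ $\GF(3)$-representable and $\GF(5)$-representable) so that ``dyadic'' becomes a representability-over-two-fields statement that is manifestly preserved by regularity of $M(K_{r+1})$, dyadicity of the small matroids, and closure under generalized parallel connection along a modular flat; then the only genuine computation left is the finite verification that the three small matroids $M([I_m\mid D_m\mid T_j])$ are dyadic.
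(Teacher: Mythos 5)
Your proposal is correct and takes essentially the same route as the paper: reduce to the rank-$4$, $3$, and $5$ base cases via Lemma~\ref{lem:univ-gen-par}, unique representability of complete graphic matroids, and closure of $\GF(3)$- and $\GF(5)$-representability under generalized parallel connection along a modular flat (Brylawski), then verify the base cases by a finite check, with $\Sigma_3$ handled as the signed-graphic ternary Dowling geometry and ``dyadic'' translated throughout via Whittle's two-field characterization. The only caveat is that your suggested alternative of exhibiting $M([I_4\mid D_4\mid T_1])$ and $M([I_5\mid D_5\mid T_3])$ as signed-graphic cannot succeed, since $\Pi_4$ and $\Omega_5$ are not signed-graphic; your primary route (a direct subdeterminant or SageMath verification of representability over both $\GF(3)$ and $\GF(5)$) is exactly what the paper does.
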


\begin{proof}
First, we show that $\Pi_4$, $\Sigma_3$, and $\Omega_5$ are dyadic. This is true for $\Sigma_3$ because it is the ternary rank-$3$ Dowling geometry. Thus, it is signed-graphic and therefore dyadic. Let $P$ be either $T_1$ or $T_3$. Consider the matrix $[I|D|P]$ but over $\GF(5)$ rather than $\GF(3)$. The authors have checked, using SageMath that the vector matroid of this matrix is isomorphic to the vector matroid of the same matrix over $\GF(3)$. Thus, $\Pi_4$ and $\Omega_5$ are representable over both $\GF(3)$ and $\GF(5)$ and are both dyadic.

Now, Lemma \ref{lem:univ-gen-par} implies that $\Pi_r$ (respectively $\Sigma_r$, $\Omega_r$) is the generalized parallel connection of $\Pi_4$, (respectively $\Sigma_3$, $\Omega_5$) and the complete graphic matroid $M(K_{r+1})$ along a common restriction isomorphic to $M(K_5)$ (respectively $M(K_4)$, $M(K_6)$). Since every complete graphic matroid is uniquely representable over every field, a result of Brylawski \cite[Theorem 6.12]{b75} implies that $\Pi_r$, $\Sigma_r$, and $\Omega_r$ are representable over both $\GF(3)$ and $\GF(5)$ and therefore dyadic. (The reader familiar with partial fields may also apply \cite[Theorem 3.1]{mwvz11} to the dyadic partial field.)
\end{proof}

We remark that, for $r\geq4$ (respectively $4$, $5$), the matroid $\Pi_r$, (respectively $\Sigma_r$, $\Omega_r$) is not signed-graphic; this can be easily checked by using SageMath to show that $\Pi_4$, (respectively $\Sigma_4$, $\Omega_5$) is not a restriction of the rank-$4$ (respectively $4$, $5$) ternary Dowling geometry.
 
The next several lemmas will be used to determine the structure of a template $\Phi$ such that $\AGE\notin\OVM$
 
\begin{lemma}
\label{lem:check-min}
If $\Phi\in\{\Phi_X,\Phi_C,\Phi_{Y_0}\,\Phi_{CX},\Phi_{CX2}\}$, then $\AGE\in\OVM$.
\end{lemma}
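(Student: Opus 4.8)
The plan is to show that $\AGE\in\OVM$ separately for each of the five templates $\Phi_X,\Phi_C,\Phi_{Y_0},\Phi_{CX},\Phi_{CX2}$, and in fact by Lemma~\ref{lem:YCX} it suffices to handle only $\Phi_C$, $\Phi_{CX}$, and $\Phi_{CX2}$: the lemma gives $\overline{\mathcal{M}(\Phi_{Y_0})}\subseteq\overline{\mathcal{M}(\Phi_C)}$ and $\overline{\mathcal{M}(\Phi_X)}\subseteq\overline{\mathcal{M}(\Phi_{CX})}$, so membership of $\AGE$ in $\overline{\mathcal{M}(\Phi_C)}$ and $\overline{\mathcal{M}(\Phi_{CX})}$ takes care of $\Phi_{Y_0}$ and $\Phi_X$ as well. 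Thus the real content is to produce, for each of the three remaining templates, an explicit matrix conforming to the template whose vector matroid has an $\AGE$-minor (equivalently, is in $\overline{\mathcal{M}(\Phi)}$).

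**First** I would recall/compute a convenient $\GF(3)$-matrix representation of $\AGE$ — for instance, writing $\AGE$ with an identity submatrix and reading off the four non-trivial columns from the geometry in Figure~\ref{fig:AG23-}, or simply extracting such a matrix from SageMath. Since $\AGE$ is $3$-connected and has $8$ elements and rank $3$, it is represented by $[I_3\,|\,N]$ for a suitable $3\times 5$ ternary matrix $N$. **Then**, for $\Phi_C$: a matrix respecting $\Phi_C$ has the shape $\left[\begin{smallmatrix}\Gamma\text{-frame}&\text{unit/zero}&\text{rows from }\Delta\end{smallmatrix}\right]$ with a single contract-column $c$ whose row entries (over $B-X$) lie in $\Delta\cong\mathbb{Z}/3\mathbb{Z}$, and one forms $M(A)/c$. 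The standard fact here (compare the construction in the proof of Lemma~\ref{lem:contract}) is that adjoining a column with entries from $\mathbb{Z}/3\mathbb{Z}$ to a graphic base and contracting it produces, in particular, any single-element "ternary extension" of a graphic matroid; I would exhibit $\AGE$ — or a matroid having it as a minor — in this form by choosing the graphic part large enough and the $\Delta$-column appropriately, then deleting down. For $\Phi_{CX}$ and $\Phi_{CX2}$ (with $A_1=[1]$ and $A_1=[-1]$ respectively, $|C|=|X|=1$, $\Delta\cong\Lambda\cong\mathbb{Z}/3\mathbb{Z}$), the same idea applies with one extra row from $\Lambda$: one writes out the $4$- or $5$-column conforming matrix explicitly and checks (by hand or with SageMath, as the paper does elsewhere) that contracting $C$ yields a matroid with an $\AGE$-minor.

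**The main obstacle** is bookkeeping rather than conceptual difficulty: one must be careful that the matrix genuinely \emph{respects} the template in the precise sense of Section~\ref{sec:Frame Templates} (the block structure with $X$, $Z$, $Y_0$, $Y_1$, $C$, the $\Gamma$-frame condition on the lower-right block, the unit-or-zero columns in $Z$, and membership in $\Lambda$ and $\Delta$), and that after contracting $C$ (and deleting any $Y_1$, here empty) the resulting matroid really has $\AGE$ as a minor — $\AGE$ is \emph{not} dyadic, so one cannot rely on any dyadic representation and must work over $\GF(3)$ throughout. I would reduce the verification to a finite check: fix small explicit conforming matrices (taking the $\Gamma$-frame block to be $[I_k\,|\,D_k]$ for a modest $k$ and all $\Lambda,\Delta$ entries as prescribed generators), compute the vector matroid, contract $C$, and exhibit the subset whose minor is $\AGE$; this is exactly the kind of SageMath-aided case check the authors invoke elsewhere. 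Once the three matrices are in hand, Lemma~\ref{lem:YCX} finishes the remaining two templates, completing the proof.
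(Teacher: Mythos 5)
There is a genuine gap at the very first step: you have applied Lemma \ref{lem:YCX} in the wrong direction. That lemma gives the containments $\overline{\mathcal{M}(\Phi_{Y_0})}\subseteq\overline{\mathcal{M}(\Phi_C)}$, $\overline{\mathcal{M}(\Phi_X)}\subseteq\overline{\mathcal{M}(\Phi_{CX})}$, and $\overline{\mathcal{M}(\Phi_X)}\subseteq\overline{\mathcal{M}(\Phi_{CX2})}$. From $\AGE\in\overline{\mathcal{M}(\Phi_C)}$ together with $\overline{\mathcal{M}(\Phi_{Y_0})}\subseteq\overline{\mathcal{M}(\Phi_C)}$ you cannot conclude $\AGE\in\overline{\mathcal{M}(\Phi_{Y_0})}$; membership propagates only from the smaller class to the larger one. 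So your reduction to ``only $\Phi_C$, $\Phi_{CX}$, and $\Phi_{CX2}$'' is invalid, and in fact you have singled out exactly the wrong three templates. The efficient route --- and the one the paper takes --- is the reverse: exhibit $\AGE$ directly in $\overline{\mathcal{M}(\Phi_{Y_0})}$ and in $\mathcal{M}(\Phi_X)$, and then Lemma \ref{lem:YCX} hands you $\Phi_C$, $\Phi_{CX}$, and $\Phi_{CX2}$ for free.

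The constructive part of your plan is salvageable but must be redirected at $\Phi_{Y_0}$ and $\Phi_X$. For $\Phi_{Y_0}$ one writes down a $4\times 9$ ternary matrix whose first eight columns are graphic (a $\{1\}$-frame matrix) and whose ninth column is the $Y_0$-column, with entries chosen row-by-row from $\Delta\cong\mathbb{Z}/3\mathbb{Z}$; contracting that ninth element yields $\AGE$, so $\AGE\in\overline{\mathcal{M}(\Phi_{Y_0})}$. For $\Phi_X$ one writes a $3\times 8$ representation of $\AGE$ whose top row, indexed by $X$, has arbitrary entries coming from $\Lambda\cong\mathbb{Z}/3\mathbb{Z}$ and whose bottom two rows form a $\{1\}$-frame matrix; this matroid conforms outright, so no minor operation is even needed. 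Your cautions about verifying the respecting and conforming conditions are well taken, but as written the proposal establishes membership for at most three of the five templates and leaves $\Phi_{Y_0}$ and $\Phi_X$ unproved.
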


\begin{proof}
 Note that the following ternary matrix conforms to $\Phi_{Y_0}$.
 \[\begin{blockarray}{rrrrrrrrr}
1 & 2 & 3 & 4 & 5 & 6 & 7 & 8 & 9\\
\begin{block}{[rrrrrrrrr]}
0& 0& 0& 0& 0& 1& 1& 1& 1\\
1& 0& 0& 1& 1& 0& 0& 0& 1\\
0& 1& 0&-1& 0& 0&-1& 0& 1\\
0& 0& 1& 0&-1& 0& 0&-1& 1\\
\end{block}
\end{blockarray}\] By contracting $9$, pivoting on the first entry, we obtain the following representation of $\mathrm{AG}(2,3)\backslash e$ (with column labels matching the labels in Figure \ref{fig:AG23-}). \[\begin{blockarray}{rrrrrrrr}
1 & 2 & 3 & 4 & 5 & 6 & 7 & 8\\
\begin{block}{[rrrrrrrr]}
1&0&0&1&1&1&1&1\\
0&1&0&-1&0&1&-1&1\\
0&0&1&0&-1&1&1&-1\\
\end{block}
\end{blockarray}\] Therefore, $\AGE\in\overline{\mathcal{M}(\Phi_{Y_0})}$. Moreover, by Lemma \ref{lem:YCX}, we also have $\AGE\in\overline{\mathcal{M}(\Phi_C)}$.

Now, note that the following ternary matrix is a representation of $\mathrm{AG}(2,3)\backslash e$. Also note that the matrix conforms to $\Phi_X$, with the top row indexed by $X$ and the bottom two rows forming a $\{1\}$-frame matrix.
\begin{equation}
\label{equ:AG23-X}
\begin{blockarray}{rrrrrrrr}
1 & 2 & 3 & 4 & 5 & 6 & 7 & 8\\
\begin{block}{[rrrrrrrr]}
0&0& -1 & 0& 1& 1&-1& 1\\
1&0&  1 & 1& 1& 0& 0& 1\\
0&1&  0 &-1& 0& 1& 1&-1\\
\end{block}
\end{blockarray}
\end{equation} Therefore, $\AGE\in\overline{\mathcal{M}(\Phi_{X})}$. Moreover, by Lemma \ref{lem:YCX}, we also have $\AGE\in\overline{\mathcal{M}(\Phi_{CX})}$ and $\AGE\in\overline{\mathcal{M}(\Phi_{CX2})}$.
\end{proof}

\begin{lemma}
\label{lem:Phi2-or-Y}
\textcolor{red}{If $\Phi$} is a refined ternary frame template such that $\AGE\notin\OVM$\textcolor{red}{, t}hen either $\mathcal{M}(\Phi)\subseteq\mathcal{M}(\Phi_2)$, or $\Phi$ is a $Y$-template.
\end{lemma}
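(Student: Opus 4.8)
The plan is to combine Lemma~\ref{lem:reduce-to-Y-template} with Lemma~\ref{lem:check-min}. By Lemma~\ref{lem:reduce-to-Y-template}, since $\Phi$ is a refined ternary frame template, either $\overline{\mathcal{M}(\Phi')}\subseteq\OVM$ for some $\Phi'\in\{\Phi_X,\Phi_C,\Phi_{Y_0},\Phi_{CX},\Phi_{CX2},\Phi_2\}$, or $\Phi$ is a $Y$-template. In the latter case we are done. So I would focus on the first case and show that, under the hypothesis $\AGE\notin\OVM$, the only possibility is $\Phi'=\Phi_2$, and that this forces $\mathcal{M}(\Phi)\subseteq\mathcal{M}(\Phi_2)$.

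First I would rule out the five templates $\Phi_X,\Phi_C,\Phi_{Y_0},\Phi_{CX},\Phi_{CX2}$. By Lemma~\ref{lem:check-min}, for each such $\Phi'$ we have $\AGE\in\overline{\mathcal{M}(\Phi')}$. If $\overline{\mathcal{M}(\Phi')}\subseteq\OVM$ for one of these, then $\AGE\in\OVM$, contradicting the hypothesis. Hence the only surviving option from Lemma~\ref{lem:reduce-to-Y-template} (other than $\Phi$ being a $Y$-template) is $\overline{\mathcal{M}(\Phi_2)}\subseteq\OVM$.

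It remains to argue that $\overline{\mathcal{M}(\Phi_2)}\subseteq\OVM$ implies $\mathcal{M}(\Phi)\subseteq\mathcal{M}(\Phi_2)$. Here I would use the fact that $\mathcal{M}(\Phi_2)$ is exactly the class of signed-graphic matroids (noted just before Definition~\ref{def:minimal}), which is minor-closed, so $\overline{\mathcal{M}(\Phi_2)}=\mathcal{M}(\Phi_2)$. The containment $\overline{\mathcal{M}(\Phi_2)}\subseteq\OVM$ then says every signed-graphic matroid conforms to $\Phi$. The reverse containment $\mathcal{M}(\Phi)\subseteq\mathcal{M}(\Phi_2)$ is what we actually want; this should follow because when $\Phi$ is not a $Y$-template, the structure forced by Lemma~\ref{lem:reduce-to-Y-template}'s proof (or a direct inspection) shows that the matroids conforming to $\Phi$ are all signed-graphic---equivalently, that in this branch $\Phi$ is strongly or minor equivalent to $\Phi_2$ in the relevant direction. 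In fact, I expect the cleanest route is: Lemma~\ref{lem:reduce-to-Y-template} is really a dichotomy between ``$\Phi$ essentially contains one of the minimal non-signed-graphic templates'' and ``$\Phi$ is a $Y$-template,'' and the case $\Phi'=\Phi_2$ is the degenerate subcase where $\Phi$ has trivial $C,X,Y_0,Y_1$ (up to strong equivalence), i.e.\ $\mathcal{M}(\Phi)\subseteq\mathcal{M}(\Phi_2)$.

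The main obstacle is precisely the last step: extracting from Lemma~\ref{lem:reduce-to-Y-template} (whose statement only gives a one-directional containment $\overline{\mathcal{M}(\Phi')}\subseteq\OVM$) the stronger conclusion $\mathcal{M}(\Phi)\subseteq\mathcal{M}(\Phi_2)$ in the $\Phi_2$ branch. I would handle this by reexamining how Lemma~\ref{lem:reduce-to-Y-template} (Theorem~7.18 of \cite{g-submitted}) is proved: the $\Phi_2$ alternative there should correspond to $\Phi$ itself being strongly equivalent to a template with $\Gamma\subseteq\{\pm1\}$ and all of $C,X,Y_0,Y_1$ empty, which is exactly $\Phi_2$ (or a sub-template of it). If the cited lemma does not give this directly, the fallback is to invoke Lemma~\ref{lem:Gamma-Y} and Lemma~\ref{lem:equiv-refined}: since none of $\Phi_X,\Phi_C,\Phi_{Y_0},\Phi_{CX},\Phi_{CX2}$ has $\overline{\mathcal{M}(\Phi')}\subseteq\OVM$, Lemma~\ref{lem:Gamma-Y} lets us replace $\Phi$ by a strongly equivalent template with $C=\emptyset$ and $\Lambda,\Delta$ trivial; Lemma~\ref{lem:equiv-refined} keeps it refined; and then either $Y_0\cup Y_1\neq\emptyset$, making it a $Y$-template, or $Y_0\cup Y_1=\emptyset$ as well, leaving only $\Gamma$ nontrivial---and in the ternary case $\Gamma\subseteq\GF(3)^\times=\{\pm1\}$, so the template is $\Phi_2$ and $\mathcal{M}(\Phi)=\mathcal{M}(\Phi_2)$.
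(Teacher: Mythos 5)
Your first two reductions are sound and match the paper: Lemma \ref{lem:check-min} together with the hypothesis $\AGE\notin\OVM$ rules out $\Phi_X,\Phi_C,\Phi_{Y_0},\Phi_{CX},\Phi_{CX2}$, and then Lemmas \ref{lem:Gamma-Y} and \ref{lem:equiv-refined} (plus minor equivalence, which is harmless because $\mathcal{M}(\Phi_2)$ is minor-closed) let you assume $\Phi$ is refined with $C=\emptyset$ and $\Lambda,\Delta$ trivial. You are also right to distrust your primary route through Lemma \ref{lem:reduce-to-Y-template}: the surviving alternative $\overline{\mathcal{M}(\Phi_2)}\subseteq\OVM$ is a containment in the wrong direction and gives no information about $\mathcal{M}(\Phi)\subseteq\mathcal{M}(\Phi_2)$.

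The genuine gap is in your final dichotomy. After the reduction, the group $\Gamma$ may still be all of $\GF(3)^{\times}=\{\pm1\}$, and a $Y$-template requires \emph{all} groups trivial, including $\Gamma$ (Definition \ref{def:Y-template} sets $\Gamma=\{1\}$). So the case ``$\Gamma=\{\pm1\}$ and $Y_0\neq\emptyset$'' is neither a $Y$-template nor obviously $\Phi_2$, and your argument simply skips it. This is where the real work of the lemma lives: after lifting (Lemma \ref{lem:lift}) so that $A_1=[I|P]$ with $P\in\F^{X\times Y_0}$, one must show that if some column of $P$ has three or more nonzero entries, then up to scaling and permutation it contains $[1,1,1]^T$ or $[1,1,-1]^T$, and in either case the representation of $\AGE$ in Matrix (\ref{equ:AG23-X}) has the form $[S|P']$ with $S$ a $\Gamma$-frame matrix, so Lemma \ref{lem:contract} produces $\AGE$ as a minor of a matroid conforming to $\Phi$ --- contradicting the hypothesis. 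Only then can one conclude that every column of $P$ (hence of every conforming matrix) has at most two nonzero entries, which is what makes $\mathcal{M}(\Phi)\subseteq\mathcal{M}(\Phi_2)$. Without this step the lemma is not proved; indeed a template with $\Gamma=\{\pm1\}$, trivial $\Lambda,\Delta$, $C=\emptyset$, and a single $Y_0$-column $[1,1,-1]^T$ satisfies everything you checked yet conforms to neither conclusion.
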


\begin{proof}
\textcolor{red}{By Lemmas \ref{lem:check-min} and \ref{lem:Gamma-Y}, $\Phi$ is strongly equivalent to a template $\Phi'=(\Gamma',C',X',Y'_0,Y'_1,A'_1,\Delta',\Lambda')$ with $C'=\emptyset$ and with $\Lambda'$ and $\Delta'$ both trivial. By Lemma \ref{lem:equiv-refined}, the template $\Phi'$ is refined. Then Lemma \ref{lem:lift} implies that $\Phi'$ is minor equivalent to some lifted template $\Phi''=(\Gamma,\emptyset,X,Y_0,Y_1,A_1,\{0\},\{0\})$.}

\textcolor{red}{If $\mathcal{M}(\Phi'')\subseteq\mathcal{M}(\Phi_2)$, then each matroid in $\mathcal{M}(\Phi)$ is a minor of a signed-graphic matroid, since $\Phi$ and $\Phi''$ are minor equivalent. Since the class of signed-graphic matroids is minor-closed, each matroid in $\mathcal{M}(\Phi)$ is signed-graphic, implying that $\mathcal{M}(\Phi)\subseteq\mathcal{M}(\Phi_2)$. If $\Phi''$ is a $Y$-template but $\mathcal{M}(\Phi'')\nsubseteq\mathcal{M}(\Phi_2)$, then $\Phi$ is a $Y$-template by minor equivalence, Lemma \ref{lem:check-min},  and Lemma \ref{lem:reduce-to-Y-template}. Thus, it suffices to prove the result for $\Phi''$.}

The only proper subgroup of the multiplicative group of $\mathrm{GF}(3)$ is the trivial group $\{1\}$. If $\Gamma$ is the trivial group $\{1\}$, then \textcolor{red}{$\Phi''$} is a $Y$-template. Thus, we may assume that $\Gamma$ is the entire multiplicative group of $\mathrm{GF}(3)$.

\textcolor{red}{Since $C=\emptyset$ and $\Phi''$ is a lifted, $A_1=[I|P]$ for some matrix $P$ with $Y_1$ indexing the columns of the identity matrix and $Y_0$ indexing the columns of $P$.} Suppose that $P$ contains a column with at least three nonzero entries. Then by column scaling and permuting of rows, we may assume that $P$ contains either $[1,1,1]^T$ or $[1,1,-1]^T$ as a submatrix. Call this submatrix $P'$, and let \textcolor{red}{$\widehat{\Phi}=(\Gamma,\emptyset,\widehat{X},\{y\},\widehat{Y_1},\widehat{A_1},\{0\},\{0\})$}, where \textcolor{red}{$\widehat{X}$} and $\{y\}$ index the rows and column of $P'$, respectively, and where \textcolor{red}{$\widehat{A_1}=[I|P']$}, with \textcolor{red}{$\widehat{Y_1}$} indexing the columns of the identity matrix. It is not difficult to see that every matroid conforming to \textcolor{red}{$\widehat{\Phi}$} is a minor of some matroid conforming to $(\Gamma,\emptyset,X,Y_0,Y_1,[I|P],\{0\},\{0\})$, which is \textcolor{red}{$\Phi''$}. Thus, $\AGE$ is not a minor of any matroid conforming to \textcolor{red}{$\widehat{\Phi}$}.

However, if $P\textcolor{red}{'}=[1,1,-1]^T$, then the representation of $\mathrm{AG}(2,3)\backslash e$ given in Matrix (\ref{equ:AG23-X}) is of the form $[S|P']$, where $S$ is a $\Gamma$-frame matrix. Thus, by Lemma \ref{lem:contract}, $\mathrm{AG}(2,3)\backslash e$ is a minor of a matroid conforming to \textcolor{red}{$\widehat{\Phi}$}. Similarly, if $P'=[1,1,1]^T$, we may take Matrix (\ref{equ:AG23-X}) and scale by $-1$ the last row and the columns indexed by $2$ and $7$. We see that $\mathrm{AG}(2,3)\backslash e$ can be represented by a matrix of the form $[S|P']$, where $S$ is a $\Gamma$-frame matrix. Again, Lemma \ref{lem:contract} implies that $\mathrm{AG}(2,3)\backslash e$ is a minor of a matroid conforming to \textcolor{red}{$\widehat{\Phi}$}.

Therefore, we deduce that every column of $P$ has at most two nonzero entries, implying that every column of every matrix conforming to \textcolor{red}{$\Phi''$} has at most two nonzero entries. Thus, \textcolor{red}{$\mathcal{M}(\Phi'')$} consists entirely of signed-graphic matroids, and \textcolor{red}{$\mathcal{M}(\Phi'')\subseteq\mathcal{M}(\Phi_2)$}.
\end{proof}

\begin{table}[!htbp]
\caption{Forbidden submatrices of $P$ where $\AGE\notin\overline{\mathcal{M}(\YT(P))}$}
\label{tab:forbidden}
\begin{center}
\begin{tabular}{|c|c||c|c|}
\hline
Matrix&Set \textcolor{red}{$S$ to Contract}&Matrix&Set \textcolor{red}{$S$ to Contract}\\
\hline
$A=\begin{bmatrix}
1\\
1\\
1\\
1\\
\end{bmatrix}$&$\{10\}$&$B=\begin{bmatrix}
1&0\\
1&0\\
1&0\\
0&1\\
0&1\\
\end{bmatrix}$&$\{15,16\}$\\
\hline
$C=\left[\begin{array}{rrr}
1&1&0\\
1&0&1\\
0&1&1\\
1&0&0\\
0&1&0\\
0&0&1\\
\end{array}\right]$&$\{0,18,23\}$&$D=\left[\begin{array}{rrr}
1&1&0\\
1&0&1\\
0&1&1\\
1&0&0\\
0&1&1\\
\end{array}\right]$&$\{0,15\}$\\
\hline
$E=\left[\begin{array}{rr}
1&0\\
1&0\\
1&1\\
0&1\\
0&-1\\
0&-1\\
\end{array}\right]$&$\{0,4,22\}$&$F=\left[\begin{array}{rr}
1&0\\
1&1\\
1&-1\\
0&1\\
0&-1\\
\end{array}\right]$&$\{0,16\}$\\
\hline
$G=\left[\begin{array}{rr}
1&0\\
1&0\\
-1&0\\
-1&1\\
0&1\\
0&-1\\
0&-1\\
\end{array}\right]$&$\{0,1,28,29\}$&$H=\left[\begin{array}{rr}
-1&1\\
-1&-1\\
1&0\\
1&0\\
0&1\\
0&-1\\
\end{array}\right]$&$\{0,15,22\}$\\
\hline
\end{tabular}
\end{center}
\end{table}

\begin{table}[!htbp]
\caption{Forbidden submatrices of $P$ where $\AGE\notin\overline{\mathcal{M}(\YT(P))}$ (Continued)}
\label{tab:forbidden2}
\begin{center}
\begin{tabular}{|c|c||c|c|}
\hline
Matrix&Set \textcolor{red}{$S$ to Contract}&Matrix&Set \textcolor{red}{$S$ to Contract}\\
\hline
\hline
$I=\left[\begin{array}{rrr}
-1&1\\
-1&-1\\
1&-1\\
1&0\\
0&1\\
\end{array}\right]$&$\{0,16\}$&$J=\left[\begin{array}{rrr}
-1&1&1\\
-1&1&0\\
1&1&1\\
1&0&1\\
\end{array}\right]$&$\{0\}$\\
\hline
$K=\left[\begin{array}{rrr}
-1&1&1\\
-1&1&0\\
1&0&1\\
1&0&1\\
0&1&0\\
\end{array}\right]$&$\{0,13\}$&$L=\left[\begin{array}{rrr}
-1&-1&1\\
-1& 1&1\\
 1&-1&1\\
 1& 1&0\\
\end{array}\right]$&$\{0\}$\\
\hline
$M=\left[\begin{array}{rrr}
-1&-1&1\\
-1& 1&1\\
 1&-1&0\\
 1& 1&1\\
\end{array}\right]$&$\{0\}$&
$N=\left[\begin{array}{rrr}
-1&-1&0\\
-1&-1&0\\
 1& 1&1\\
 1& 0&1\\
 0& 1&1\\
\end{array}\right]$&$\{0,17\}$\\
\hline
$O=\left[\begin{array}{rr}
-1&0\\
-1&-1\\
1&1\\
1&0\\
0&-1\\
0&1\\
\end{array}\right]$&$\{0,15,22\}$&&\\
\hline
\end{tabular}
\end{center}
\end{table}

\begin{lemma}
\label{lem:forbidden}
Let $\Phi$ be the complete, lifted $Y$-template determined by some matrix $P$. If $P$ contains as a submatrix some matrix listed in Table \ref{tab:forbidden} or \ref{tab:forbidden2}, then $\AGE\in\OVM$.
\end{lemma}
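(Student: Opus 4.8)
The statement to prove is Lemma~\ref{lem:forbidden}: if $P$ contains one of the fourteen matrices $A,\dots,O$ listed in Tables~\ref{tab:forbidden} and~\ref{tab:forbidden2} as a submatrix, then $\AGE\in\OVM$, where $\Phi=\YT(P)$ is the complete, lifted $Y$-template determined by $P$. The overall strategy is a reduction followed by a fixed finite check. First I would invoke Lemma~\ref{lem:submatrix}: if $P_0$ is one of the tabulated matrices and $P_0$ is a submatrix of $P$, then every matroid conforming to $\YT(P_0)$ is a minor of a matroid conforming to $\YT(P)$, hence $\overline{\mathcal{M}(\YT(P_0))}\subseteq\overline{\mathcal{M}(\YT(P))}$. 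So it suffices to prove the lemma in the case $P$ equals one of $A,\dots,O$; the general case follows immediately. This reduces an infinite family of templates to fourteen explicit templates.

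**The core of the argument** is then to exhibit, for each of the fourteen matrices, a concrete representation of $\AGE$ as a minor of the rank-$r$ universal matroid of $\YT(P)$ for suitable $r$. By Definition~\ref{def:universal}, the rank-$r$ universal matroid for $\YT(P)$ is represented by $\left[\begin{array}{c|c|c}I_r&D_r&P\\\hline &&0\end{array}\right]$ (with the bottom rows absent when $P$ has no more rows than columns, i.e.\ the $0$-block has the appropriate shape). Here is where the ``Set to Contract $S$'' column of the tables does the work: for each matrix the table specifies a set $S$ of elements of the universal matroid to contract. After contracting $S$ and simplifying (deleting loops and parallel elements), one should be left with a rank-$3$ ternary matroid; one then verifies by a direct computation --- easily automated with SageMath as the authors do elsewhere in the paper, cf.\ the proof of Lemma~\ref{lem:check-min} --- that this simplification is isomorphic to $\AGE$. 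Since $\AGE$ is thereby realized as a minor (after contraction and simplification, which corresponds to a deletion) of a matroid in $\mathcal{M}(\YT(P))$, we conclude $\AGE\in\overline{\mathcal{M}(\YT(P))}=\OVM$. Each such verification is a bounded computation in a matroid of fixed, small rank and size, so there is no conceptual difficulty once the right contraction set is known.

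**One technical point to handle carefully** is the choice of $r$: the tables give only the columns of $P$, so one must pick $r$ large enough that the indicated contraction set $S$ --- whose element labels reference columns of $I_r$, of $D_r$, and of $P$ in a single indexing scheme --- actually lies in the ground set, and large enough that $M(K_{r+1})$ contains the needed graphic structure. Because $\AGE$ has rank $3$, and because $M([I_r|D_r])=M(K_{r+1})$, it is enough to take $r$ equal to the number of rows of the matrix $[I|D|P]$ in question (equivalently $r$ = number of rows of $P$), which is at most $7$ across all fourteen cases; I would state this explicitly and note that the contraction sets in the tables are written with respect to that $r$. A secondary bookkeeping point: after contracting $S$ one typically pivots to return to standard form, exactly as in the passage following Matrix~(\ref{equ:AG23-X}); these pivots are reversible row operations and do not change the matroid, so the isomorphism check is unaffected.

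**The main obstacle** is not mathematical depth but the sheer volume and fragility of the case analysis: fourteen matrices, each requiring the correct contraction set and a matroid-isomorphism verification, with ample opportunity for off-by-one errors in element labels between the $I_r$, $D_r$, and $P$ blocks. The honest way to present this, consistent with the rest of the paper, is to reduce to the fourteen cases via Lemma~\ref{lem:submatrix}, state that for each case contracting the indicated set $S$ from the universal matroid and simplifying yields $\AGE$, and defer the routine per-case verifications to SageMath. The one place where a genuine idea is needed --- rather than brute force --- is recognizing that these particular fourteen submatrices are exactly the obstructions one wants to rule out, but that recognition is presumably the product of the forthcoming structural analysis (Lemma~\ref{lem:Phi2-or-Y} reduces to $Y$-templates, and subsequent lemmas will presumably show that a $P$ avoiding all of $A,\dots,O$ forces $\YT(P)$ into one of the three named families $\Pi_r,\Sigma_r,\Omega_r$); for the present lemma the task is purely to certify that each listed submatrix does force $\AGE$ in.
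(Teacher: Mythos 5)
Your proposal matches the paper's proof: reduce to the finitely many listed matrices via Lemma~\ref{lem:submatrix}, form $M([I_r|D_r|P'])$ with $r$ the number of rows of $P'$, and certify by SageMath that contracting the tabulated set $S$ and simplifying leaves a matroid with an $\AGE$-minor. The only slip is cosmetic: the computation checks that the simplification of $M/S$ \emph{contains} $\AGE$ as a minor (the contraction merely expedites the search), not that it \emph{is} $\AGE$, and there are fifteen matrices $A$--$O$, not fourteen.
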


\begin{proof}
Let $M=M([I|D|P'])$, where $P'$ is some submatrix listed in Table \ref{tab:forbidden} or \ref{tab:forbidden2}. By Lemma \ref{lem:submatrix}, it suffices to show that $\AGE$ is a minor of $M$. For each of the matrices $A$--$O$, the authors used SageMath to show this. The computations are expedited by contracting some subset of $E(M)$ and simplifying the resulting matroid before testing if $\mathrm{AG}(2,3)\backslash e$ is a minor. If $P'$ is an $r\times c$ matrix, then $|E(M)|=\binom{r+1}{2}+c$. Label the elements of $M$, from left to right, as $\{0,1,2,\ldots,\binom{r+1}{2}+c-1\}$. If $S$ is the set to contract listed in Table \ref{tab:forbidden} or \ref{tab:forbidden2} corresponding to the matrix $P'$, then the authors used SageMath to show that the simplification of $M/S$ contains $\mathrm{AG}(2,3)\backslash e$ as a minor. The code for an example computation is given in \ref{appendix}. Using that code will ensure that the \textcolor{red}{sets} $S$ listed in Tables \ref{tab:forbidden} and \ref{tab:forbidden2} are accurate, if one wishes to reproduce these calculations.
\end{proof}

By considering the matrix $[I|D|P]$, one can see that scaling a column of one of the matrices listed in Table \ref{tab:forbidden} or \ref{tab:forbidden2} results in another forbidden submatrix. This is because scaling a column of $P$ is the same as scaling a column of $[I|D|P]$; the vector matroid of the resulting matrix is equal to that of the original matrix. Similarly, permutation of rows or columns of $P$ can be extended to permutation of rows or columns of $[I|D|P]$, resulting in a matrix with an equal vector matroid. Therefore, permutation of rows or columns of a matrix listed in Table \ref{tab:forbidden} or \ref{tab:forbidden2} results in another forbidden matrix. However, scaling a row of $[I|D|P]$ results in a matrix that can no longer be written in that form. That is, if we scale a row, the resulting matrix no longer is of the form of Definition \ref{def:universal}\textcolor{red}{.} Therefore, scaling a row of one of the matrices listed in Table \ref{tab:forbidden} or \ref{tab:forbidden2} does not necessarily result in another forbidden matrix.

\begin{notation}
In the remainder of this section, the matrices $A$--$O$ are the matrices listed in Tables \ref{tab:forbidden} and \ref{tab:forbidden2}.
\end{notation}

\begin{lemma}
\label{lem:main-case}
Let $V$ be a ternary matrix consisting entirely of unit columns, and let $W$ be a ternary matrix each of whose columns has exactly two nonzero entries, both of which are $1$s. If $\Phi$ is the complete, lifted $Y$-template determined by the following ternary matrix, then $\mathcal{M}(\Phi)$ is contained in the class of signed-graphic matroids.
\[\left[
\begin{array}{c|c}
1\cdots1&-1\cdots-1\\
1\cdots1&-1\cdots-1\\
\hline
V&W\\
\end{array}
\right]\]
\end{lemma}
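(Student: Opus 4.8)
The plan is to show that $\mathcal{M}(\Phi)$ consists of signed-graphic matroids by arguing that every matrix conforming to $\Phi$ has columns of weight at most two, which by the definition of signed-graphic matroids over $\GF(3)$ (the frame-matrix characterization, realized by the template $\Phi_2$) suffices. Since $\Phi$ is a complete, lifted $Y$-template, every simple matroid conforming to $\Phi$ is a restriction of the rank-$r$ universal matroid $M([I_r\,|\,D_r\,|\,P])$, where $P$ is the displayed matrix $[\,1\cdots1\,|\,-1\cdots-1\,;\,V\,|\,W\,]$ (with the two identical all-ones/all-($-1$) rows on top and $V,W$ below). By Lemma \ref{lem:no-graphic} we may delete the graphic columns of $P$, so we may assume $P$ has no weight-$\leq 1$ columns; and by Lemma \ref{lem:comp-lift}(ii) we could remove one of the two identical top rows if needed, though it is cleaner to keep both. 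The key point is that a generalized-parallel-connection / universal-matroid analysis reduces the question to the structure of $P$ itself: the matroid is signed-graphic precisely when $[I\,|\,D\,|\,P]$ can be taken to be a $\GF(3)$-frame matrix up to the allowed operations, i.e. when $P$ has only weight-$\leq 2$ columns after row scalings compatible with the $[I\,|\,D\,|\cdot]$ form.

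The core of the argument is a forbidden-submatrix analysis using Lemma \ref{lem:forbidden}. I would argue the contrapositive: suppose some matrix conforming to $\Phi$ has a column of weight $\geq 3$; then $P$ (equivalently, $V$ or $W$ together with the top rows) contains one of the forbidden configurations $A$--$O$ from Tables \ref{tab:forbidden}--\ref{tab:forbidden2}, up to column scaling and row/column permutation, forcing $\AGE \in \OVM$ — but $\Phi$ being contained in the signed-graphic class is exactly what we want, so this is really the statement that no such column can occur once we know $\AGE\notin\OVM$. Wait — the statement has no hypothesis $\AGE\notin\OVM$. So instead the right strategy is direct: every column of $V$ has weight $\leq 1$ and every column of $W$ has weight $\leq 2$ (with equal nonzero entries) by hypothesis; the top block contributes two more equal entries to each column of $P$. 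Thus a column of $P$ coming from the $W$-part has weight up to $4$ (two from the top block, two from $W$), and a column from the $V$-part has weight up to $3$. The task is to show that, despite $P$ having weight-$3$ and weight-$4$ columns, the resulting universal matroid is still signed-graphic — because the two identical top rows can be "absorbed": contracting along the graphic structure, the repeated rows let us realize each such column as a frame column after suitable row operations within the $[I\,|\,D\,|\,P]$ template, or equivalently, every matroid conforming to $\Phi$ is a minor of a ternary Dowling geometry.

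Concretely, I would exhibit, for each column type of $P$, an explicit signed-graph model: a weight-$4$ column with entries $(1,1,1,1)$ (two from the top, two from a $(1,1)$-column of $W$) corresponds in $M([I\,|\,D\,|\,P])$ to an element lying on a line through the "virtual" point identified by the two equal top rows — and after the generalized parallel connection with $M(K_{r+1})$ along the common $M(K_{m+1})$ (Lemma \ref{lem:univ-gen-par}), this element becomes an edge in a signed graph. The cleanest route is: let $u$ be the "doubled" coordinate (the two identical top rows represent a single parallel class after the appropriate contraction in forming the matroid from the template), so effectively $P$ collapses to $[\,1\cdots1\,|\,-1\cdots-1\,;\,V\,|\,W\,]$ with a single top row; then every column of this collapsed matrix has weight $\leq 2$ in the $V$-part plus one from the collapsed top row, but a $(1,1)$-column of $W$ plus a collapsed top-row entry still gives weight $3$. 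So one further observation is needed: such a weight-$3$ column, with all three nonzero entries equal, represents a point on a coordinatizing line and can be "reduced" — via Lemma \ref{lem:comp-lift} and the fact that the sum of the rows must be arrangeable to zero — to a frame column, showing membership in $\mathcal{M}(\Phi_2)$.

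I expect the main obstacle to be making precise the "absorption of the two repeated top rows" — that is, showing rigorously that the presence of two identical rows equal to $(1\cdots1\,|\,-1\cdots-1)$ forces the universal matroid $M([I_r\,|\,D_r\,|\,P])$ to be signed-graphic rather than merely dyadic. This likely requires either (a) a direct SageMath verification that the relevant small universal matroid (for the base case $r$ equal to the number of rows of $P$) is a restriction of the appropriate ternary Dowling geometry, combined with Lemma \ref{lem:univ-gen-par} to lift to all $r$; or (b) an explicit signed-graph construction: identify the two top coordinates, use that a column of the form "two equal entries from the top pair plus the rest of $V$ or $W$" becomes, after the natural pivot, a frame column. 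Handling the interaction between the $(1,1)$-columns of $W$ and the doubled top rows — which a priori could create a weight-$4$ column genuinely needing the dyadic (not just signed-graphic) structure — is where I would focus the real work, and I suspect the resolution is that $W$'s columns, having two $1$'s, combine with the two top $1$'s to give a column that is a scalar multiple of the zero vector modulo the frame structure, i.e. these are exactly the columns that get deleted or collapsed, leaving a genuine $\GF(3)$-frame matrix.
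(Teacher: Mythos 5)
You have correctly located the crux of the lemma --- the two identical top rows must somehow be ``absorbed'' so that the weight-$3$ and weight-$4$ columns of $P$ become frame columns --- but your proposal never closes that gap, and the guesses you offer in its place are not right. The resolution is not that the $W$-columns ``get deleted or collapsed'' (they survive as negative edges of the signed graph), and a SageMath check of a base case cannot suffice since $V$, $W$, and the rank are arbitrary. What is actually needed is a short explicit computation in two steps. First, every column of the displayed matrix has entries summing to $0$ over $\GF(3)$ (a $V$-column contributes $1+1+1$, a $W$-column contributes $-1-1+1+1$), so Lemma \ref{lem:comp-lift}(ii) applies and you may delete one of the two identical top rows --- contrary to your remark that it is ``cleaner to keep both,'' this deletion is essential, since with both rows present no single row operation cleans up $[I_r\mid D_r]$ and $P$ simultaneously. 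Second, in the universal-matroid representation $[I_r\mid D_r\mid P']$ of the reduced template, subtract every other row from the remaining top row. This one row operation is the entire proof: the top entry of a $V$-column becomes $1-1=0$, that of a $W$-column becomes $-1-(1+1)=0$, and the columns of $[I_r\mid D_r]$ retain weight at most two. Every column of the resulting matrix then has at most two nonzero entries, so every conforming matroid is signed-graphic.

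Two of your intermediate claims would also have led you astray had you pursued them. After collapsing to one top row, a $W$-type column is $(-1,1,1)^{T}$ padded with zeros; its nonzero entries are \emph{not} all equal and cannot be made so by column scaling, so the ``reduce a type-$3$ column'' idea does not apply, and indeed Lemma \ref{lem:comp-lift}(ii) cannot be applied a second time because the row sums of $P'$ are no longer zero. And, as you noticed yourself mid-proposal, the forbidden-submatrix machinery of Lemma \ref{lem:forbidden} and the hypothesis $\AGE\notin\OVM$ are irrelevant here: the statement is an unconditional, purely linear-algebraic fact, and the row operation above is all there is to it.
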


\begin{proof}
Let $P$ be the matrix obtained from the given matrix by removing the top row. By Lemma \ref{lem:comp-lift}(ii), $\Phi$ is minor equivalent to the complete, lifted $Y$-template $\YT(P)$. Every matroid $M$ conforming to $\YT(P)$ is a restriction of the vector matroid of a matrix of the following form. (If the rank of $M$ is larger than the number of rows of $P$, then some of the rows of $[V|W]$ below are zero rows.)
\[\left[
\begin{array}{c|c|c|c|c|c}
1&0\cdots0&1\cdots1&0\cdots0&1\cdots1&-1\cdots-1\\
\hline
0&&&&&\\
\vdots&I&-I&D&V&W\\
0&&&&&\\
\end{array}
\right]\]
From the first row, subtract all other rows. The result is the following.
\[\left[
\begin{array}{c|c|c|c|c|c}
1&-1\cdots-1&-1\cdots-1&0\cdots0&0\cdots0&0\cdots0\\
\hline
0&&&&&\\
\vdots&I&-I&D&V&W\\
0&&&&&\\
\end{array}
\right]\]
This is a ternary matrix such that each column has at most two nonzero entries. Therefore, $M$ is a signed-graphic matroid.
\end{proof}

If the sum of the rows of a matrix is the zero vector, then the sum of the nonzero entries of each column is $0$. The next definition describes two types of columns of ternary matrices for which this is true.

\begin{definition}
\label{def:type-3-or-4}
A column of a ternary matrix is of \emph{type $3$} if it is of the form $[1,1,1,0,\ldots,0]^T$, up to permuting of rows. A column of a ternary matrix is of \emph{type $4$} if it is of the form $[-1,-1,1,1,0,\ldots,0]^T$, up to permuting of rows. 
\end{definition}

\begin{lemma}
\label{lem:3or4nonzeros}
Let $\YT(P)$ be a complete, lifted, ternary $Y$-template such that $\AGE\notin\overline{\mathcal{M}(\YT(P))}$. Then $\YT(P)$ is minor equivalent to the complete, lifted $Y$-template determined by a ternary matrix $P'$ each of whose columns can be scaled so that it is either a type-$3$ column or a type-$4$ column.
\end{lemma}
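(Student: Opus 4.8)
The plan is to start with a complete, lifted, ternary $Y$-template $\YT(P)$ with $\AGE\notin\overline{\mathcal{M}(\YT(P))}$, apply Lemma \ref{lem:comp-lift}(i) to assume the sum of the rows of $P$ is the zero vector (this is harmless up to minor equivalence), and then analyze what the individual columns of $P$ can look like. First I would observe that, since $\YT(P)$ is complete, $P$ already contains $D_{|X|}$ as a submatrix, so every graphic (weight-$\le 2$) column of $P$ contributes nothing new; by Lemma \ref{lem:no-graphic} we may pass to $P^-$ and assume every column of $P$ has weight at least $3$. Now fix a column $c$ of $P$. Because the row-sum of $P$ is zero, $c$ itself need not sum to zero, but I want to argue that after scaling $c$ it becomes a type-$3$ or type-$4$ column, i.e.\ has at most $4$ nonzero entries with the sign pattern described in Definition \ref{def:type-3-or-4}.

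The core of the argument is a finite case check driven by Tables \ref{tab:forbidden} and \ref{tab:forbidden2}. Suppose some column $c$ of $P$, after scaling, is \emph{not} of type $3$ or type $4$. Then either $c$ has at least $5$ nonzero entries, or $c$ has exactly $3$ or $4$ nonzero entries but with a sign pattern different from $[1,1,1]^T$ or $[-1,-1,1,1]^T$ (up to row permutation and scaling of $c$). I would enumerate the possibilities for the restriction of $c$, together possibly with one neighboring column of $P$ (restricted to a few rows), and show in each case that the resulting submatrix contains — after the allowed operations of row/column permutation and column scaling, which by the remark following Lemma \ref{lem:forbidden} preserve the ``forbidden'' property — one of the matrices $A$–$O$ of Tables \ref{tab:forbidden} and \ref{tab:forbidden2} as a submatrix. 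By Lemma \ref{lem:forbidden}, that forces $\AGE\in\overline{\mathcal{M}(\YT(P))}$, contradicting the hypothesis. The single-column cases ($c$ of weight $\ge 5$, or weight $4$ with a bad sign pattern, or weight $3$ with a bad sign pattern) should be handled by matrices $A$, $B$, $J$–$M$ and the weight-$5$ entries; the cases where a single column alone is not forbidden but the interaction of two columns is (e.g.\ two columns whose supports overlap in a way producing $C$, $D$, $E$, $F$, $G$, $H$, $I$, $N$, or $O$) are exactly why the tables list $2$- and $3$-column matrices. Having ruled out every bad column (and every bad pair), we conclude that every column of $P^-$ can be scaled to a type-$3$ or type-$4$ column; taking $P'=P^-$ and invoking Lemma \ref{lem:no-graphic} (for the minor equivalence $\YT(P)\sim\YT(P^-)$) and Lemma \ref{lem:comp-lift} (to return to completeness/liftedness if needed) finishes the proof.

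The main obstacle I anticipate is the bookkeeping of the case analysis: one must check that the list $A$–$O$ is genuinely \emph{exhaustive}, i.e.\ that every ternary column of weight $\ge 3$ that is not scale-equivalent to type $3$ or type $4$, and every admissible pair of columns, actually contains one of the tabulated forbidden submatrices after row/column permutations and column scalings. This requires being careful that the row-sum-zero normalization on $P$ does not interact badly with column scaling (column scaling of $P$ can destroy the row-sum-zero property, but that is fine here since we only use row-sum-zero to invoke Lemma \ref{lem:comp-lift}(i) at the start, and the forbidden-submatrix test of Lemma \ref{lem:forbidden} is insensitive to it). A secondary subtlety is making sure that when we restrict $P$ to a submatrix to match one of $A$–$O$, we are allowed to delete the other rows — this is exactly the content of Lemma \ref{lem:submatrix}, so it is available. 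Modulo this finite (SageMath-assisted, in the spirit of Lemma \ref{lem:forbidden}) verification, the argument is a clean reduction.
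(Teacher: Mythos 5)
Your overall skeleton (apply Lemma \ref{lem:comp-lift}(i) to normalize the row sum to zero, apply Lemma \ref{lem:no-graphic} to discard graphic columns, then rule out all other column shapes via forbidden submatrices) matches the paper's, but there is a genuine gap in the middle that stems from a misreading of the normalization. You write that ``because the row-sum of $P$ is zero, $c$ itself need not sum to zero.'' This is backwards: the sum of the rows of $P$ being the zero vector says precisely that \emph{every column} of $P$ has entries summing to $0$ in $\GF(3)$. That observation is the engine of the paper's proof. Once every column sums to zero and has weight at least $3$ (graphic columns being the only weight-$\le 2$ columns summing to zero), a weight-$3$ column must have three equal entries (type $3$ up to scaling) and a weight-$4$ column must have two entries of each sign (type $4$ up to scaling) --- no forbidden-submatrix argument is needed for these weights at all. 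The only remaining work is to kill columns of weight at least $5$, which the paper does with a single trick: $\YT([1,1,1,1,-1]^T)$ is minor equivalent, via Lemma \ref{lem:comp-lift}(ii), to both $\YT([1,1,1,1]^T)=\YT(A)$ and $\YT([1,1,1,-1]^T)$, so $[1,1,1,-1]^T$ is forbidden alongside $A$; hence no column may contain four nonzero entries of which at least three are equal, and a pigeonhole argument disposes of every column of weight $\ge 5$.

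Because you missed that each column sums to zero, your proposed case analysis includes cases --- a weight-$3$ column with sign pattern $[1,1,-1]^T$, or a weight-$4$ column with pattern $(3,1)$ --- that the tabulated matrices cannot eliminate: $A$ is the only single-column entry in Tables \ref{tab:forbidden} and \ref{tab:forbidden2}, and a lone $[1,1,-1]^T$ contains none of $A$--$O$ as a submatrix. (Whether $\AGE\in\overline{\mathcal{M}(\YT([1,1,-1]^T))}$ is something the paper never needs to decide, precisely because such a column is excluded by the normalization.) Your plan to invoke $B$, $J$--$M$ for ``single-column cases'' cannot work since those are multi-column matrices, and your appeal to pairs of columns producing $C$--$I$, $N$, $O$ belongs to the later structural lemmas (\ref{lem:type-3-only} through \ref{lem:444}), not to this one, which is a statement about individual columns only. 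So as written the case check is not completable with the available tools; the fix is to use the column-sum-zero consequence of Lemma \ref{lem:comp-lift}(i) correctly and to supply the minor-equivalence argument for $[1,1,1,-1]^T$.
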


\begin{proof}
By Lemma \ref{lem:comp-lift}(i), we may assume that the sum of the rows of $P$ is the zero vector. By Lemma \ref{lem:no-graphic}, we may assume that $P$ has no graphic columns. Since graphic columns are the only columns with weight at most $2$ whose entries sum to $0$, every column of $P$ has at least three nonzero entries.

Now, by Lemma \ref{lem:comp-lift}(ii), the complete, lifted $Y$-template determined by the matrix $[1,1,1,1,-1]^T$ is minor equivalent to the complete, lifted $Y$-templates determined by both $[1,1,1,-1]^T$ and $[1,1,1,1]^T$, which is the matrix $A$ from Table \ref{tab:forbidden}. Since $A$ is forbidden from being a submatrix of $P$, so is $[1,1,1,-1]^T$ by minor equivalence. Thus, no column of $P$ can contain four nonzero entries, at least three of which are equal. Therefore, by scaling columns of $P$, we may assume that every column of $P$ is of the form $[1,1,1,0,\ldots,0]^T$ or of the form $[-1,-1,1,1,0,\ldots,0]^T$, up to permuting rows. Thus, each column of $P$ is either of type $3$ or of type $4$.
\end{proof}

\begin{lemma}
\label{lem:type-3-only}
Let $P$ be a ternary matrix all of whose columns are of type $3$. Either $\AGE\in\overline{\mathcal{M}(\YT(P))}$, or all matroids in $\mathcal{M}(\YT(P))$ are signed-graphic.
\end{lemma}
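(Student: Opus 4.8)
The plan is to reduce, by a sequence of minor-equivalences, the matrix $P$ to a very restricted normal form and then to recognize that form as producing only signed-graphic matroids via Lemma \ref{lem:main-case}. First I would invoke Lemma \ref{lem:comp-lift}(i) to assume that the sum of the rows of $P$ is the zero vector, and Lemma \ref{lem:no-graphic} to assume that $P$ has no graphic columns. Since every column is of type $3$, it has exactly three nonzero entries, all equal to $1$ after scaling (scaling a column is harmless by the remark following Lemma \ref{lem:forbidden}, and by Lemma \ref{lem:3or4nonzeros}'s argument no type-$3$ column has four equal nonzero entries, so the three $1$'s is the only possibility). So we may assume every column of $P$ is a $0/1$ column of weight exactly $3$.

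Next I would use the forbidden submatrices of Tables \ref{tab:forbidden} and \ref{tab:forbidden2} together with Lemma \ref{lem:forbidden} to constrain how two distinct type-$3$ columns of $P$ can overlap. The matrix $C$ (three weight-$2$ columns together with an identity, i.e.\ the incidence structure of a triangle) and the matrix $J$ (and possibly $D$, $K$, $L$, $M$) are each forbidden; these encode exactly the ``bad'' overlap patterns between two or three weight-$3$ all-ones columns whose supports meet in one or two coordinates. A short combinatorial case analysis on $|\mathrm{supp}(p)\cap\mathrm{supp}(q)|\in\{0,1,2\}$ for columns $p,q$ of $P$ should show that, after permuting rows, the supports of the columns of $P$ can all be made to lie in a common ``book'' configuration: there are two distinguished rows $i_1,i_2$ such that every column has its support containing $\{i_1,i_2\}$, i.e.\ every column of $P$ has $1$'s in rows $i_1,i_2$ and a single further $1$ somewhere below. (Intuitively: once we delete the graphic columns, the type-$3$ columns that remain cannot form a triangle or any of the other forbidden incidence patterns, and the only weight-$3$ set systems with pairwise intersections avoiding those patterns are the ``sunflowers with a core of size $2$''.) The main obstacle is exactly this step: carefully enumerating the overlap configurations of two or three type-$3$ columns and checking that each non-book configuration contains one of $C, D, J, K, L, M,\dots$ (up to row/column permutation and column scaling) as a submatrix of $P$ — one has to be sure the tables list enough forbidden patterns to kill every alternative.

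Finally, once $P$ has been brought to the book form — two all-ones rows on top and, below them, a matrix $V$ each of whose columns is a unit vector (the ``single further $1$''), with no $W$-part at all since all columns are type $3$ — I would apply Lemma \ref{lem:main-case} with $W=[\emptyset]$ (or with the empty collection of two-ones columns) to conclude that every matroid in $\mathcal{M}(\YT(P'))$ is signed-graphic, where $P'$ is the book-form matrix. Since $\YT(P)$ and $\YT(P')$ are minor equivalent and signed-graphic matroids are minor-closed, it follows that every matroid in $\mathcal{M}(\YT(P))$ is signed-graphic, completing the proof. (One technical point to double-check: Lemma \ref{lem:main-case} is stated with a nonempty top block of $1$'s and $-1$'s; the all-type-$3$ case uses only the $1$'s side, so I would note that the lemma's proof goes through verbatim when $W$ is empty, or simply pad $P'$ harmlessly.)
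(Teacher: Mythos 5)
Your target normal form is too strong, and that is where the argument breaks. Two type-$3$ columns whose supports meet in exactly \emph{one} row --- say supports $\{1,2,3\}$ and $\{1,4,5\}$ --- contain no matrix from Tables \ref{tab:forbidden} and \ref{tab:forbidden2} as a submatrix: the only two-column forbidden matrix whose columns are (up to sign) $0/1$ vectors is $B$, and $B$ requires disjoint supports, while $E,F,G,H,I,O$ each have a column with both a $1$ and a $-1$ and so cannot arise from scaling columns of a $0/1$ matrix. Such a pair is therefore permitted, yet it cannot be brought into your ``book'' configuration with a common core of two rows, so the reduction you describe does not terminate in the form needed for Lemma \ref{lem:main-case} with $W=[\emptyset]$; that lemma requires every column to carry $1$'s in two fixed rows, which is simply not forced here. (The matrices $J,K,L,M$ you cite are likewise irrelevant in the all-type-$3$ setting for the same sign reason; the only forbidden matrices that bear on this lemma are $B$, $C$, and $D$.)

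What forbidding $B$, $C$, and $D$ actually yields --- and what the paper's proof uses --- is the weaker statement that the supports of all columns of $P$ share a \emph{single} common row: $B$ rules out disjoint pairs, and $C$ and $D$ rule out the configurations of three pairwise-intersecting weight-$3$ supports with empty common intersection (even this step deserves a short Helly-style check to pass from triples to the whole family). Since each type-$3$ column sums to $0$ over $\GF(3)$, the sum of the rows of $P$ is already zero, so Lemma \ref{lem:comp-lift}(ii) applies directly and one deletes that single common row; the resulting matrix has exactly two nonzero entries per column, so every matrix conforming to the resulting template has at most two nonzero entries per column and the matroids are signed-graphic outright, with no appeal to Lemma \ref{lem:main-case}. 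Your proposal can be repaired by replacing the two-row core with a one-row core and replacing the final invocation of Lemma \ref{lem:main-case} with this direct observation, but as written the key reduction step fails.
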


\begin{proof}
Suppose $\AGE\notin\overline{\mathcal{M}(\YT(P))}$. By Lemma \ref{lem:forbidden}, the matrices $B$, $C$, and $D$ from Table \ref{tab:forbidden} are not submatrices of $P$. Therefore, the intersection of the supports of all of the columns of $P$ is nonempty. Therefore, Lemma \ref{lem:comp-lift}(ii) implies that $\YT(P)$ is minor equivalent to the template $\Phi'$ determined by the matrix obtained from $P$ by removing the row where every column has a nonzero entry. Every matrix conforming to $\Phi'$ has at most two nonzero entries per column. Therefore, every matroid conforming to the template is signed-graphic.
\end{proof}

\begin{lemma}
\label{lem:type-3-and-4}
Let $P$ be a ternary matrix with exactly two columns, one of type $3$ and one of type $4$. Either $\AGE\in\overline{\mathcal{M}(\YT(P))}$, or the columns of $P$ can be scaled so that the resulting matrix is of one of the following forms, up to permuting of rows and columns and removal of rows all of whose entries are $0$.

\begin{center}
$\begin{bmatrix}
-1&1\\
-1&1\\
1&0\\
1&0\\
0&1\\
\end{bmatrix}$
$\begin{bmatrix}
-1&1\\
-1&1\\
1&1\\
1&0\\
\end{bmatrix}$
\end{center}
\end{lemma}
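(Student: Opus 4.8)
The plan is to classify, up to scaling columns, permuting rows and columns, and deleting zero rows, the possible configurations of a type-$3$ column together with a type-$4$ column, and then to invoke Lemma~\ref{lem:forbidden} to eliminate all configurations except the two listed. First I would set up notation: write the type-$3$ column $c_3$ with support $\{a,b,c\}$ (all entries $1$ after scaling) and the type-$4$ column $c_4$ with support of size $4$, where after scaling its entries are $-1,-1,1,1$; note that scaling $c_4$ only swaps the roles of the two $-1$'s with the two $1$'s, so we may fix $c_4 = [-1,-1,1,1,0,\dots,0]^T$. The essential parameter is how the three-element support of $c_3$ meets the four-element support of $c_4$: the intersection can have size $0,1,2,$ or $3$. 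Size $0$ is impossible once we delete zero rows, because then $P$ would have two disjoint supports of total size $7$ but a complete lifted $Y$-template determined by such $P$ is handled by removing a row only if supports overlap; more to the point, a size-$0$ intersection forces $P$ to contain (a row-permutation of) the block-diagonal matrix with $c_3$ and $c_4$ in separate rows, and one checks this contains a forbidden submatrix (or reduces, by Lemma~\ref{lem:comp-lift}(ii) applied to the implicit all-ones structure, to a smaller case). So the real work is the cases where the intersection has size $1$, $2$, or $3$.

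Next I would enumerate each case, keeping track of the signs of $c_3$'s entries on the overlap (recall $c_3$ is all $1$'s, but the rows of $c_4$ carry signs $-1,-1,1,1$, and a row permutation can only move a $+1$-row of $c_4$ to another $+1$-row, a $-1$-row to another $-1$-row). For intersection size $1$: the shared row is either a $+$-row or a $-$-row of $c_4$, giving (after also placing the two off-support entries of $c_3$) essentially the two candidate matrices in the statement — the second displayed matrix has overlap exactly in one $+$-row. For intersection size $2$: the two shared rows are either both $+$, both $-$, or one of each; the ``one of each'' choice yields (up to sign-scaling $c_4$) the first displayed matrix, while the other sub-cases I expect to contain one of $E,F,G,H,I,O$ from Table~\ref{tab:forbidden} as a submatrix. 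For intersection size $3$: here $c_3$ sits entirely inside the support of $c_4$; depending on whether the three shared rows are $\{+,+,-\}$ or $\{+,-,-\}$ (these are the only possibilities after accounting for the row-swap symmetry of $c_4$), one gets matrices that I expect to match $L$ or $M$ from Table~\ref{tab:forbidden2}, hence are forbidden. Throughout, whenever a candidate is not one of the two listed forms, I would exhibit an explicit row/column permutation and column-scaling carrying a submatrix of it onto one of $A$--$O$, and then Lemma~\ref{lem:forbidden} gives $\AGE \in \overline{\mathcal{M}(\YT(P))}$.

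The main obstacle I anticipate is bookkeeping rather than conceptual: one must be careful that the only allowed operations are column-scaling (which preserves the type-$3$/type-$4$ dichotomy only after re-normalizing), row and column permutations, and deletion of zero rows — in particular one may \emph{not} scale rows, as the remark following Lemma~\ref{lem:forbidden} emphasizes. So I must verify that each surviving configuration genuinely cannot be normalized further to one of the two listed forms, and that each discarded configuration genuinely contains a forbidden submatrix \emph{with the correct signs} (not merely the correct support pattern). A secondary subtlety is that $P$ here has exactly two columns, so there is no ambiguity about other columns interfering; the forbidden submatrices in Tables~\ref{tab:forbidden}--\ref{tab:forbidden2} that have two columns ($B,E,F,G,H,I,O$) are the relevant ones, and I would check each against the enumerated cases. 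I would organize the write-up as a short case analysis on the size of the support-intersection, dispatching each non-listed case by citing a specific table entry, and confirming that the two listed matrices are exactly the ones surviving — matching, respectively, the ``two rows, mixed sign'' subcase of intersection size $2$ and the ``one $+$-row'' subcase of intersection size $1$.
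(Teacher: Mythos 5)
Your overall framework --- casing on the size of the intersection of the two columns' supports and dispatching the bad cases via entries of Tables \ref{tab:forbidden} and \ref{tab:forbidden2} --- is the same as the paper's, but your assignment of cases to outcomes is wrong in almost every instance, and carried out as written the argument would reach the wrong conclusion. The two displayed matrices in the statement are the intersection-size-$2$ configuration in which both shared rows carry the \emph{same} sign in the type-$4$ column (first matrix) and the intersection-size-$3$ configuration (second matrix). You instead claim that intersection size $1$ yields the surviving configurations; in fact the size-$1$ configuration is exactly the forbidden matrix $E$ (after scaling the type-$4$ column so that its entry in the shared row is $+1$), so it is eliminated, not kept. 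Likewise you claim the ``one of each sign'' size-$2$ subcase survives: that subcase is exactly the forbidden matrix $F$, while it is the same-sign subcase that gives the first listed matrix. Finally, you propose to kill intersection size $3$ by exhibiting $L$ or $M$, but those matrices have three columns and so cannot be submatrices of the two-column matrix $P$; intersection size $3$ is in fact the second surviving configuration.

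There is also a gap in the disjoint-support case. The paper's mechanism is that the rows of $E$ sum to the zero vector, so Lemma \ref{lem:comp-lift}(ii) makes $\YT(E)$ minor equivalent to $\YT(E')$, where $E'$ is obtained from $E$ by deleting its weight-two row; hence $E'$ is also forbidden, and a type-$3$/type-$4$ pair with disjoint supports contains $E'$ up to permutation and column scaling. Your sketch gestures at the right lemma but never identifies $E'$ or explains why it is forbidden. The correct dispatch is: size $0$ via $E'$, size $1$ via $E$, size $2$ with mixed signs via $F$, and size $2$ with equal signs together with size $3$ surviving as the two listed matrices.
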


\begin{proof}
Suppose $\AGE\notin\overline{\mathcal{M}(\YT(P))}$. Then all matrices listed in Tables \ref{tab:forbidden} and \ref{tab:forbidden2} are forbidden from being submatrices of $P$. Let $E'$ be the matrix obtained by removing the row of $E$ with two nonzero entries. By Lemma \ref{lem:comp-lift}(ii), the matrix $E'$ is forbidden from being a submatrix of $P$ also.

If the columns of $P$ have supports with an intersection of size $0$ or $1$, then $P$ contains $E'$ or $E$, respectively, up to permuting rows and scaling columns. Since $E'$ and $E$ are both forbidden matrices, the two columns of $P$ must have supports with an intersection of size at least $2$. Since $F$ is a forbidden matrix, the result follows.
\end{proof}

\begin{lemma}
\label{lem:2-type-4}
Let $P$ be a ternary matrix with exactly two columns, both of type $4$ and neither a scalar multiple of the other. Either $\AGE\in\overline{\mathcal{M}(\YT(P))}$, or the columns of $P$ can be scaled so that the resulting matrix is of one of the following forms, up to permuting of rows and columns and removal of rows all of whose entries are $0$.

\begin{center}
$\begin{bmatrix}
-1&-1\\
-1&-1\\
1&0\\
1&0\\
0&1\\
0&1\\
\end{bmatrix}$
$\begin{bmatrix}
-1&-1\\
-1&-1\\
1&1\\
1&0\\
0&1\\
\end{bmatrix}$
$\begin{bmatrix}
-1&-1\\
-1&1\\
1&-1\\
1&1\\
\end{bmatrix}$
\end{center}
\end{lemma}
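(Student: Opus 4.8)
The plan is to imitate the proof of Lemma~\ref{lem:type-3-and-4}. Assume for contradiction that $\AGE\notin\overline{\mathcal{M}(\YT(P))}$. Then, by Lemma~\ref{lem:forbidden}, none of the matrices $A$--$O$ of Tables~\ref{tab:forbidden} and~\ref{tab:forbidden2} occurs as a submatrix of $P$, even after column scaling or row/column permutation (the operations noted after Lemma~\ref{lem:forbidden}). Write $u$ and $v$ for the two columns of $P$ and set $t=|\mathrm{supp}(u)\cap\mathrm{supp}(v)|$. Since $u$ and $v$ are type~$4$, each has support of size $4$, so $t\in\{0,1,2,3,4\}$, and after deleting the zero rows of $P$ (permitted by the statement) $P$ has exactly $8-t$ rows. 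As $u$ and $v$ are not scalar multiples of each other, $t=4$ is the only value for which they can have the same support.

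The core of the argument is a classification, for each value of $t$, of the $(8-t)\times 2$ matrices with two type-$4$ columns whose supports overlap in exactly $t$ coordinates, up to column scaling and row/column permutation. Such a matrix is determined by recording, for each column, how the $t$ shared coordinates are distributed among that column's two sign-pairs, together with the residual freedom to permute coordinates carrying a common value and to negate each column. When $t=4$ one checks directly that the only such matrix with non-proportional columns is the third displayed matrix. When $t=3$ there are two configurations, giving the second displayed matrix or the matrix $I$. When $t=2$ there are three configurations: both shared coordinates lying in a single sign-pair of each column gives the first displayed matrix; one shared coordinate in each sign-pair of \emph{each} column gives the matrix $O$ (here the two arrangements that appear to differ, according to whether a shared coordinate carrying $-1$ in $u$ carries $-1$ or $+1$ in $v$, are interchanged by negating one column and transposing two rows, so both reduce to $O$); and the mixed configuration---two shared coordinates in one sign-pair of $u$ but one in each sign-pair of $v$---gives the matrix $H$. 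When $t=1$, normalizing the single shared coordinate shows the matrix is exactly $G$. Thus, under the standing assumption, any case $t\in\{1,2,3\}$ not already yielding one of the three displayed forms yields one of $G,H,O,I$, contradicting Lemma~\ref{lem:forbidden}; so $P$ is one of the displayed matrices --- provided we also dispose of $t=0$.

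The case $t=0$ requires one extra step, since the block-diagonal matrix with two $4\times 1$ type-$4$ blocks is not literally one of $A$--$O$. Here I would use Lemma~\ref{lem:comp-lift}(ii): the sum of the rows of $G$ is the zero vector, so deleting the unique row of $G$ with two nonzero entries produces a matrix $G'$ with $\YT(G')$ minor equivalent to $\YT(G)$; hence $\AGE\in\overline{\mathcal{M}(\YT(G'))}$ by Lemma~\ref{lem:forbidden} applied with $G$ in place of $P$. The matrix $G'$ consists of two columns with disjoint supports, each of weight $3$ with entries not all equal, so up to column scaling it is block-diagonal with blocks $w$ and $w'$, where $w$ and $w'$ are weight-$3$ columns each having two entries of one sign and one entry of the other sign. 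Selecting any three of the four support rows of each column of the $t=0$ matrix produces, up to column scaling and row permutation, precisely such a matrix; hence $P$ contains a column scaling of $G'$ as a submatrix, so $\AGE\in\overline{\mathcal{M}(\YT(P))}$ by Lemma~\ref{lem:submatrix}, contradicting the assumption and eliminating $t=0$.

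The main obstacle I expect is the bookkeeping in the $t=2$ and $t=3$ subcases: one must carefully use the residual symmetry (independent negation of each column, and permutation of the shared coordinates that a column assigns the same value) to verify that each configuration, once normalized, is \emph{exactly} one of $G,H,O,I$ or one of the three displayed matrices---in particular that no extra configuration survives---and matching these normalized matrices against the precise entries of $H$, $O$, $I$ in Tables~\ref{tab:forbidden} and~\ref{tab:forbidden2} (up to column scaling and row permutation, but not row scaling) must be done with care.
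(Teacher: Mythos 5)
Your proposal is correct and follows essentially the same route as the paper: the paper's proof also splits on the size of the intersection of the two supports, handles the disjoint case by forming $G'$ from $G$ via Lemma \ref{lem:comp-lift}(ii), uses $G$ for intersection size $1$, and invokes the forbidden matrices $H$, $I$, and $O$ to dispose of the remaining non-displayed configurations when the intersection has size at least $2$. Your write-up simply makes explicit the normalization bookkeeping that the paper leaves to the reader.
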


\begin{proof}
Suppose $\AGE\notin\overline{\mathcal{M}(\YT(P))}$. Then all matrices listed in Tables \ref{tab:forbidden} and \ref{tab:forbidden2} are forbidden from being submatrices of $P$.  Let $G'$ be the matrix obtained by removing the row of $G$ with two nonzero entries. By Lemma \ref{lem:comp-lift}(ii), the matrix $G'$ is forbidden from being a submatrix of $P$ also.

If the columns of $P$ have supports with an intersection of size $0$ or $1$, then $P$ contains $G'$ or $G$, respectively, up to  scaling of columns and permuting of rows and columns. Since $G'$ and $G$ are both forbidden matrices, the two columns of $P$ must have supports with an intersection of size at least $2$. Moreover, since the matrices $H$, $I$, and $O$  are forbidden, the result follows.
\end{proof}

Recall from Definition \ref{def:T-matrices}, that $T_1$, $T_2$, and $T_3$ are the matrices
\begin{center}
$T_1=\begin{bmatrix}
-1&1&0\\
-1&1&0\\
1&0&1\\
1&0&1\\
\end{bmatrix}$,
$T_2=\begin{bmatrix}
-1&1&1\\
1&-1&1\\
1&1&-1\\
\end{bmatrix}$, and
$T_3=\begin{bmatrix}
-1&-1&0\\
-1&-1&0\\
1&0&-1\\
1&0&-1\\
0&1&1\\
\end{bmatrix}$.
\end{center}

\begin{lemma}
\label{lem:433}
Let $P$ be a ternary matrix with at least three columns such that exactly one column is of type $4$ and all other columns are of type $3$. Also, suppose that no column of $P$ is a scalar multiple of another. Either $\AGE\in\overline{\mathcal{M}(\YT(P))}$, or all matroids in $\mathcal{M}(\YT(P))$ are signed-graphic, or $\YT(P)$ is minor equivalent to the complete, lifted $Y$-template $\YT(P')$, where $P'$ is a submatrix of $T_1$. 
\end{lemma}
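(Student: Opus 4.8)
The plan is to follow the pattern of Lemmas~\ref{lem:type-3-only}--\ref{lem:2-type-4}: assume $\AGE\notin\overline{\mathcal{M}(\YT(P))}$ and also that not every matroid in $\mathcal{M}(\YT(P))$ is signed-graphic, and deduce that $\YT(P)$ is minor equivalent to $\YT(P')$ for some submatrix $P'$ of $T_1$. By Lemma~\ref{lem:submatrix}, none of the matrices $A$--$O$ is a submatrix of $P$. Since every column of $P$ is of type $3$ or type $4$, the entries in each column of $P$ sum to $0$, so the rows of $P$ sum to the zero vector; hence Lemma~\ref{lem:comp-lift}(ii) lets us delete a row of $P$ without changing $\overline{\mathcal{M}(\YT(P))}$, and (as in the proofs of Lemmas~\ref{lem:type-3-and-4} and \ref{lem:2-type-4}) any row-deletion of one of $A$--$O$ that itself has zero row sum is forbidden as a submatrix of $P$ as well.

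First I would normalize. Write $P=[\,t\mid c_1\mid\cdots\mid c_\ell\,]$, where $t$ is the type-$4$ column, with support $S_t=\{a_1,a_2,b_1,b_2\}$ scaled so that $t$ is $-1$ on $\{a_1,a_2\}$ and $+1$ on $\{b_1,b_2\}$, and $c_1,\dots,c_\ell$ (with $\ell\geq 2$) are the type-$3$ columns, having distinct three-element supports $S_1,\dots,S_\ell$. Applying Lemma~\ref{lem:type-3-and-4} to each two-column submatrix $[\,t\mid c_i\,]$ shows that, after rescaling $c_i$, either $S_i$ consists of one of the pairs $\{a_1,a_2\}$, $\{b_1,b_2\}$ together with a row outside $S_t$ (``form (a)''), or $S_i\subseteq S_t$ (``form (b)''); either way $|S_i\cap S_t|\ge 2$. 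Applying Lemma~\ref{lem:type-3-only} via Lemma~\ref{lem:submatrix} to the submatrix formed by the type-$3$ columns, and using that $B$, $C$, and $D$ are not submatrices of $P$, shows exactly as in the proof of Lemma~\ref{lem:type-3-only} that $S_1,\dots,S_\ell$ have a common element $x$.

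Next I would split on whether $x\in S_t$. If $x\notin S_t$, then form (b) is impossible (it would force $x\in S_i\subseteq S_t$), so every $c_i$ is of form (a); as $x\notin S_t$, $x$ is then the unique element of $S_i$ outside $S_t$, whence $S_i$ is the union of $\{x\}$ with one of the two pairs of $t$, and since the $S_i$ are distinct we get $\ell=2$, with the two columns using the two different pairs of $t$. Deleting row $x$ (Lemma~\ref{lem:comp-lift}(ii)) then turns $P$, up to column scaling and permutation of rows and columns, into $T_1$. If $x\in S_t$, we may take $x=a_1$ after possibly scaling $t$ by $-1$; every form-(a) column then uses the pair $\{a_1,a_2\}$. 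Here the three-column forbidden matrices $J$ and $K$, together with their zero-row-sum row-deletions, restrict which pairs of columns $c_i,c_j$ can simultaneously lie over $t$; a finite case check of the surviving configurations should show that either a suitable row of a matrix minor equivalent to $P$ can be deleted to bring it into the shape of the matrix in Lemma~\ref{lem:main-case}, so that every matroid in $\mathcal{M}(\YT(P))$ is signed-graphic (contrary to assumption), or else $\YT(P)$ reduces---by repeatedly deleting rows outside $S_t$ via Lemma~\ref{lem:comp-lift}(ii) (re-establishing the zero-row-sum condition with Lemma~\ref{lem:comp-lift}(i) as needed), discarding graphic columns via Lemma~\ref{lem:no-graphic}, and discarding duplicate columns (harmless for $\overline{\mathcal{M}(\YT(\cdot))}$, since a parallel pair of the universal matroid reduces to a single element)---to $\YT(P')$ with $P'$ a submatrix of $T_1$.

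I expect the main obstacle to be the case analysis in the branch $x\in S_t$: one must enumerate, up to permutation of rows and columns and scaling of columns, every matrix $[\,t\mid c_1\mid\cdots\mid c_\ell\,]$ with $t$ of type $4$, each $c_i$ of type $3$, the supports $S_i$ all containing the common row $x\in S_t$, and containing no forbidden submatrix, and verify that each such matrix either conforms only to signed-graphic matroids or is minor equivalent to $\YT(P')$ for some submatrix $P'$ of $T_1$. A secondary but persistent technical point is that each application of Lemma~\ref{lem:comp-lift}(ii) can destroy the zero-row-sum property, so the sequence of reductions must be arranged (and the intermediate matrices tracked) carefully, invoking Lemma~\ref{lem:comp-lift}(i) between deletions, so that every step is justified.
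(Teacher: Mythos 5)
Your proposal is correct and follows essentially the same route as the paper: pairwise constraints from Lemma~\ref{lem:type-3-and-4}, elimination of configurations via the forbidden matrices $B$, $J$, $K$, and an endgame through Lemma~\ref{lem:main-case} or a reduction to $T_1$ via Lemma~\ref{lem:comp-lift}(ii). The only organizational difference is that the paper cases on the number of nonzero rows of $P$ (exactly $4$, exactly $5$, or at least $6$) rather than on whether the common element $x$ of the type-$3$ supports lies in the support of the type-$4$ column; the two decompositions cover the same configurations. The one step you defer --- the ``finite case check'' in the branch $x\in S_t$ --- does close, and more cleanly than you suggest: if some type-$3$ column has support $\{a_1,b_1,b_2\}$ (i.e., omits $a_2$), then pairing it with any second type-$3$ column containing $a_1$ produces $J$ or $K$ up to column scaling and row permutation, contradicting $\ell\geq 2$; hence in that branch every type-$3$ column contains the full pair $\{a_1,a_2\}$ with the sign forced by Lemma~\ref{lem:type-3-and-4}, so $P$ is already in the shape of Lemma~\ref{lem:main-case} (no row deletion needed) and only the signed-graphic outcome occurs there, while the $T_1$ outcome arises exactly in your $x\notin S_t$ branch, matching the paper's Case~2.
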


\begin{proof}
Suppose $\AGE\notin\overline{\mathcal{M}(\YT(P))}$. Then all matrices listed in Tables \ref{tab:forbidden} and \ref{tab:forbidden2} are forbidden from being submatrices of $P$. There are three cases to consider.

\begin{enumerate}
\item $P$ has exactly four nonzero rows. 
\item $P$ has exactly five nonzero rows.
\item $P$ has six or more nonzero rows.
\end{enumerate}

Let $Q$ be the submatrix of $P$ consisting of its nonzero rows.

In Case 1, Lemma \ref{lem:type-3-and-4} and the fact that the matrix $J$ is not a submatrix of $P$ imply that $Q$ must be the following matrix, up to scaling of columns and permuting of rows and columns.
\[\begin{bmatrix}
-1&1&1\\
-1&1&1\\
1&1&0\\
1&0&1\\
\end{bmatrix}\]
This matrix is of the form given in Lemma \ref{lem:main-case}. Therefore, by that lemma, all matroids in $\mathcal{M}(\YT(P))$ are signed-graphic.

In Case 2, Lemma \ref{lem:type-3-and-4} implies that $Q$ must be a column submatrix of the following matrix.
 \[\begin{blockarray}{rrrrrrr}
1 & 2 & 3 & 4 & 5 & 6 & 7\\
\begin{block}{[rrrrrrr]}
      -1 & 1 & 0 & 1 & 1 & 1 & 0\\
      -1 & 1 & 0 & 1 & 1 & 0 & 1\\
       1 & 0 & 1 & 1 & 0 & 1 & 1\\
       1 & 0 & 1 & 0 & 1 & 1 & 1\\
       0 & 1 & 1 & 0 & 0 & 0 & 0\\
\end{block}
\end{blockarray}\] 
By assumption, $Q$ includes column $1$. Since $P$ has five nonzero rows, $Q$ includes column $2$ or $3$. Without loss of generality, say that $Q$ includes column $2$. Because $K$ is a forbidden matrix, $Q$ cannot contain column $6$ or $7$, and if $Q$ includes column $3$, then it cannot contain column $4$ or $5$. If $Q$ does not contain column $3$, then $Q$ (and therefore $P$) is of the form given in Lemma \ref{lem:main-case}. Therefore, by that lemma, all matroids in $\mathcal{M}(\YT(P))$ are signed-graphic. If $Q$ does contain column $3$, then Lemma \ref{lem:comp-lift}(ii) implies that $\YT(P)$ is minor equivalent to the complete lifted $Y$-template determined by the matrix $T_1$.

In Case 3, Lemma \ref{lem:type-3-and-4} implies that, up to permuting of rows and columns, $P$ contains the following submatrix, where $(a,b,c,d)$ is either $(1,1,0,0)$ or $(0,0,1,1)$.
\[\begin{bmatrix}
-1&1&a\\
-1&1&b\\
 1&0&c\\
 1&0&d\\
 0&1&0\\
 0&0&1\\
\end{bmatrix}\]
However, $(a,b,c,d)\neq(0,0,1,1)$, because $B$ is forbidden matrix. Therefore, $(a,b,c,d)=(1,1,0,0)$. In fact, we have shown that every (type-$3$) column with a nonzero entry outside of the first five rows must have its other two nonzero entries in the first two rows. It remains to consider the type-$3$ columns in $P$ all of whose nonzero entries are in the first five rows. The analysis of Case 2 shows that such a column either has two of its nonzero entries in the first two rows or is of the form $[0,0,1,1,1,0,\ldots,0]^T$. However, the presence of the second column in the matrix above, with the fact that $B$ is a forbidden matrix, imply that the column cannot be of the form $[0,0,1,1,1,0,\ldots,0]^T$. Therefore, we deduce that all type-$3$ columns have of $P$ have nonzero entries in the first two rows. Thus, $P$ is of the form given in Lemma \ref{lem:main-case}. Therefore, by that lemma, all matroids in $\mathcal{M}(\YT(P))$ are signed-graphic.
\end{proof}

\begin{lemma}
\label{lem:443}
Let $P$ be a ternary matrix with at least three columns such that exactly two columns are of type $4$ and all other columns are of type $3$. Also, suppose that no column of $P$ is a scalar multiple of another. Either $\AGE\in\overline{\mathcal{M}(\YT(P))}$, or all matroids in $\mathcal{M}(\YT(P))$ are signed-graphic. 
\end{lemma}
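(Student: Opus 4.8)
The plan is to mirror the proofs of Lemmas~\ref{lem:type-3-and-4}, \ref{lem:2-type-4}, and \ref{lem:433}. Suppose $\AGE\notin\overline{\mathcal{M}(\YT(P))}$; I will show every matroid in $\mathcal{M}(\YT(P))$ is signed-graphic. By Lemmas~\ref{lem:forbidden} and \ref{lem:submatrix}, no matrix in Tables~\ref{tab:forbidden} and \ref{tab:forbidden2} is a submatrix of $P$, and the same is true of any column submatrix of $P$; since all columns are type~$3$ or type~$4$ the rows of $P$ sum to zero, so Lemma~\ref{lem:comp-lift}(ii) applies. Write $u,v$ for the two type-$4$ columns and $w_1,\dots,w_\ell$ ($\ell\ge1$) for the type-$3$ columns.

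First I would apply Lemma~\ref{lem:2-type-4} to $[u\mid v]$: after scaling columns, permuting rows and columns, and deleting zero rows, $[u\mid v]$ becomes one of the three displayed matrices, giving Cases~I, II, III as $|\operatorname{supp}(u)\cap\operatorname{supp}(v)|$ is $2$, $3$, or $4$. The key fact, extracted from Lemma~\ref{lem:type-3-and-4}, is that for any type-$3$ column $w=w_i$ and either of $u,v$ (say $u$), the pair $[u\mid w]$ reduces to one of the two matrices of that lemma, so $\operatorname{supp}(w)\cap\operatorname{supp}(u)$ is either a \emph{value pair} of $u$---one of the two $2$-element sets of rows on which $u$ is constant and nonzero---or such a value pair together with one further row of $\operatorname{supp}(u)$. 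In Cases~I and II, $u$ and $v$ can be scaled to share a common constant pair of rows, say rows $1,2$ (equal to $-1$ there). A short count of how many of a type-$3$ column's three nonzeros can lie in $\{1,2\}$, in the private rows of $u$ and of $v$, and outside $\operatorname{supp}(u)\cup\operatorname{supp}(v)$---using the value-pair constraint against \emph{both} $u$ and $v$---forces every $w_i$ to have exactly two (necessarily equal) nonzeros in rows $1$ and $2$ and one further nonzero below, except for a single alternative in Case~II in which $w_i$ is supported on the three rows outside $\{1,2\}$; that alternative makes $[u\mid v\mid w_i]$ equal to the forbidden matrix $N$, so it cannot occur. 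Thus $P$ has exactly the form of the matrix in Lemma~\ref{lem:main-case}---the $W$-block being $u,v$ below rows $1,2$ and the $V$-block the unit columns coming from the $w_i$---so every matroid in $\mathcal{M}(\YT(P))$ is signed-graphic.

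It remains to rule out Case~III. There $\operatorname{supp}(u)=\operatorname{supp}(v)$ is a $4$-element set of rows, and the value pairs of $u$ ``cross'' those of $v$, so no $2$-element set is a value pair of both; by the value-pair constraint, $\operatorname{supp}(w_i)$ cannot meet $\operatorname{supp}(u)$ in exactly two rows, hence meets it in three, i.e.\ $\operatorname{supp}(w_i)\subseteq\operatorname{supp}(u)$. But then $[u\mid v\mid w_i]$ is one of the forbidden matrices $L,M$ up to the row/column symmetries noted after Lemma~\ref{lem:forbidden}, a contradiction since $\ell\ge1$.

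I expect the only real work to be in making the count in Cases~I and II airtight: one must check that the value-pair constraint, imposed simultaneously by $u$ and by $v$, eliminates every placement of a type-$3$ column other than the ``$(1,1,\text{unit})$'' one matching Lemma~\ref{lem:main-case}, and that the lone survivor in Case~II is precisely $N$. This is elementary, and---unlike in Lemma~\ref{lem:433}---the type-$3$ columns are pinned down so tightly that no interaction between distinct type-$3$ columns, and no new SageMath computation beyond those in Lemma~\ref{lem:forbidden}, is required.
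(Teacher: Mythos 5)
Your proposal is correct and follows essentially the same route as the paper: it splits on the size of the intersection of the supports of the two type-$4$ columns (your Cases I, II, III correspond to the paper's Case 2, Case 1, and its preliminary ``no equal supports'' step), uses Lemmas \ref{lem:type-3-and-4} and \ref{lem:2-type-4} to pin the type-$3$ columns to the common constant pair of rows, rules out the exceptional placements via the forbidden matrices $N$, $L$, and $M$, and concludes with Lemma \ref{lem:main-case}. The ``value-pair'' counting you defer is exactly the check the paper performs, and it goes through as you predict.
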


\begin{proof}
Suppose $\AGE\notin\overline{\mathcal{M}(\YT(P))}$. Then all matrices listed in Tables \ref{tab:forbidden} and \ref{tab:forbidden2} are forbidden from being submatrices of $P$.

First, we show that \textcolor{red}{the type-$4$ columns cannot} have equal supports. Suppose otherwise; then Lemmas \ref{lem:type-3-and-4} and \ref{lem:2-type-4} imply that $P$ contains one of the following submatrices, up to scaling columns and permuting of rows and columns.
\[\begin{bmatrix}
-1&-1&1\\
-1& 1&1\\
 1&-1&1\\
 1& 1&0\\
\end{bmatrix}
\begin{bmatrix}
-1&-1&1\\
-1& 1&1\\
 1&-1&0\\
 1& 1&1\\
\end{bmatrix}
\]
But these are the forbidden matrices $L$ and $M$.

Because the supports of the type-$4$ columns are unequal, there are two cases to consider.
\begin{enumerate}
\item The supports of \textcolor{red}{the} type-$4$ columns intersect in a set of size $3$.
\item \textcolor{red}{The supports of the type-$4$ columns intersect} in a set of size $2$.
\end{enumerate}

In Case 1, Lemma \ref{lem:2-type-4} implies that $P$ contains the following submatrix, where the third column is part of a type-$3$ column of $P$.
\[\begin{bmatrix}
-1&-1&a\\
-1&-1&b\\
 1& 1&c\\
 1& 0&d\\
 0& 1&e\\
\end{bmatrix}\]
Lemma \ref{lem:type-3-and-4}, using the first and third columns above, implies that either $a=b=1$ or $c=d=1$. Similarly, using the second and third columns above, Lemma \ref{lem:type-3-and-4} implies that either $a=b=1$ or $c=e=1$. However, since $N$ is a forbidden matrix, we must have $a=b=1$. We see then that all type-$3$ columns must have nonzero entries in the first two rows. Therefore,  $P$ is of the form given in Lemma \ref{lem:main-case}.

In Case 2, Lemma \ref{lem:2-type-4} implies that $P$ contains the following submatrix, where the third column is part of a type-$3$ column of $P$.
\[\begin{bmatrix}
-1&-1&a\\
-1&-1&b\\
 1& 0&c\\
 1& 0&d\\
 0& 1&e\\
 0& 1&f\\
\end{bmatrix}\]
If either $a=0$ or $b=0$, Lemma \ref{lem:type-3-and-4} implies that $c=d=1$ but also that $e=f=1$. This is impossible in a type-$3$ column; therefore, all type-$3$ columns have nonzero entries in the first two rows. Thus, $P$ is of the form given in Lemma \ref{lem:main-case}.

In either case, Lemma \ref{lem:main-case} implies that all matroids in $\mathcal{M}(\YT(P))$ are signed-graphic.
\end{proof}

\begin{lemma}
\label{lem:444}
Let $P$ be a ternary matrix all of whose columns are of either type $3$ or type $4$, with at least three type-$4$ columns. Also, suppose that no column of $P$ is a scalar multiple of another. Either $\AGE\in\overline{\mathcal{M}(\YT(P))}$, or all matroids in $\mathcal{M}(\YT(P))$ are signed-graphic, or $\YT(P)$ is minor equivalent to $\YT(T_2)$ or $\YT(T_3)$.
\end{lemma}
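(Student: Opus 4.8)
The plan is to suppose that $\AGE\notin\overline{\mathcal{M}(\YT(P))}$ and that not every matroid in $\mathcal{M}(\YT(P))$ is signed-graphic, and to deduce that $\YT(P)$ is minor equivalent to $\YT(T_2)$ or to $\YT(T_3)$. As in the proofs of Lemmas \ref{lem:433} and \ref{lem:443}, the hypothesis $\AGE\notin\overline{\mathcal{M}(\YT(P))}$ means, by Lemmas \ref{lem:forbidden} and \ref{lem:submatrix}, that no matrix obtained from one of $A$--$O$ by scaling columns or permuting rows and columns occurs as a submatrix of $P$; moreover, by Lemma \ref{lem:comp-lift}(ii), whenever such a matrix has a row that becomes constant after a column scaling, the matrix with that row deleted is also forbidden as a submatrix of $P$.

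First I would study the type-$4$ columns by themselves. Fix three of them, say $p_1,p_2,p_3$; since no column of $P$ is a scalar multiple of another, Lemma \ref{lem:2-type-4} applies to each of the three pairs and shows that the supports of each pair meet in a set of size $2$, $3$, or $4$, with the pair equal, up to column scaling and permutation of rows and columns, to the corresponding normal form of that lemma. I would first rule out the size-$3$ possibility: in the five-row normal form of a size-$3$ pair, adjoining a third type-$4$ column and consulting Tables \ref{tab:forbidden} and \ref{tab:forbidden2} (in particular the matrix $N$ and the row-deletions of $L$ and $M$) forces either $\AGE\in\overline{\mathcal{M}(\YT(P))}$ or a submatrix of the shape treated by Lemma \ref{lem:main-case}. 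Hence every pair of type-$4$ columns of $P$ meets in a set of size $2$ or $4$. If some pair meets in a $4$-set $T$, then up to column scaling there are exactly three type-$4$ columns supported on $T$, no two of them scalar multiples; after checking that a type-$4$ column whose support meets $T$ in only $2$ rows, or any type-$3$ column, would produce a forbidden submatrix (for instance, two equally supported type-$4$ columns together with a type-$3$ column give $L$ or $M$), one concludes that $P$ consists exactly of those three columns, possibly with some zero rows. That matrix is, up to column scalings and permutations of rows and columns, the matrix obtained from $T_2$ by appending the row that makes the row sums zero, so Lemma \ref{lem:comp-lift} gives $\YT(P)$ minor equivalent to $\YT(T_2)$. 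In the remaining case, every pair of type-$4$ columns meets in a $2$-set; using the size-$2$ normal form of Lemma \ref{lem:2-type-4}, in which each of the two columns restricts to $(-1,-1)$ on the two shared rows, I would show that for three such columns either all three pairwise intersections coincide --- in which case $P$ has the form of Lemma \ref{lem:main-case} and every matroid in $\mathcal{M}(\YT(P))$ is signed-graphic --- or the three supports and their pairwise intersections form the six-row ``cyclic'' pattern exhibited by the three type-$4$ columns of the zero-row-sum extension of $T_3$.

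In this last subcase it remains to show that no further column can be present: a type-$3$ column, or a fourth type-$4$ column, adjoined to the six-row cyclic configuration produces --- after suitable column scalings and, where needed, the row-deletion move of Lemma \ref{lem:comp-lift}(ii) --- a submatrix equal to one of the matrices of Tables \ref{tab:forbidden} and \ref{tab:forbidden2}, contradicting $\AGE\notin\overline{\mathcal{M}(\YT(P))}$. Hence $P$ consists exactly of those three type-$4$ columns, and $\YT(P)$ is minor equivalent to $\YT(T_3)$ by Lemma \ref{lem:comp-lift}. The main obstacle is the combinatorial bookkeeping underlying the last two paragraphs: enumerating the ways in which the supports of the (at least three) type-$4$ columns and of any coexisting type-$3$ columns can pairwise intersect, and verifying that every configuration not named in the conclusion yields either a forbidden submatrix from Tables \ref{tab:forbidden} and \ref{tab:forbidden2} or a matrix of the form in Lemma \ref{lem:main-case}; several of these verifications are exactly the SageMath computations of Lemma \ref{lem:forbidden}. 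A minor but persistent subtlety is that scaling a row of $[I\,|\,D\,|\,P]$ produces a matrix no longer of the form in Definition \ref{def:universal}, so the reductions to $T_2$ and $T_3$ must be carried out through column scalings, row and column permutations, and the row-sum-zero and row-removal operations of Lemma \ref{lem:comp-lift}, rather than by unrestricted row operations.
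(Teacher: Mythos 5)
Your proposal follows essentially the same route as the paper's proof: a case analysis on whether the supports of pairs of type-$4$ columns intersect in sets of size $2$, $3$, or $4$, using Lemmas \ref{lem:type-3-and-4}, \ref{lem:2-type-4}, \ref{lem:main-case}, and \ref{lem:comp-lift} together with the forbidden matrices of Tables \ref{tab:forbidden} and \ref{tab:forbidden2} to arrive at the cyclic $T_3$ configuration, the signed-graphic form, or the $T_2$ configuration, respectively, and then ruling out additional columns in the $T_2$ and $T_3$ cases. The one point to tighten is in your size-$3$ step: exhibiting a \emph{submatrix} of the shape in Lemma \ref{lem:main-case} does not by itself give the signed-graphic conclusion --- you must show, as the paper does in its corresponding case, that \emph{every} type-$4$ and type-$3$ column of $P$ has its pair of distinguished nonzero entries in the same two rows, so that all of $P$ has that form.
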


\begin{proof}
Suppose $\AGE\notin\overline{\mathcal{M}(\YT(P))}$. Then all matrices listed in Tables \ref{tab:forbidden} and \ref{tab:forbidden2} are forbidden from being submatrices of $P$. Let $T_2^+$ be the matrix obtained from $T_2$ by appending the row $[-1,-1,-1]$, and let $T_3^+$ be the matrix obtained from $T_3$ by appending the row $[0,1,1]$. We consider the following cases.
\begin{enumerate}
\item The supports of every pair of type-$4$ columns intersect in a set of size $2$.
\item There is a pair of type-$4$ columns whose supports intersect in a set of size $3$, but no pair of type-$4$ columns have equal supports.
\item There is a pair of type-$4$ columns with equal supports.
\end{enumerate}

In Case 1, Lemma \ref{lem:2-type-4} implies that $P$ contains the following submatrix, where the third column is part of a type-$4$ column of $P$.
\[\begin{bmatrix}
-1&-1&a\\
-1&-1&b\\
 1& 0&c\\
 1& 0&d\\
 0& 1&e\\
 0& 1&f\\
\end{bmatrix}\]
If $a=0$ or $b=0$, then Lemma \ref{lem:2-type-4}, using the first and third columns, implies that $c=d\neq0$ and implies that $e=f\neq0$, using the second and third columns. Since the third column is of type $4$, we have $c=-e$. By column scaling, we may assume that $e=1$. Thus, the above matrix is $T_3^+$. It is not possible for $P$ to  contain a type-$3$ column or a fourth type-$4$ column because no such column can satisfy Lemma \ref{lem:type-3-and-4} or \ref{lem:2-type-4}, respectively, with each of the three existing columns.
Therefore, $P$, when restricted to its nonzero rows, is $T_3^+$.  \textcolor{red}{Lemma \ref{lem:comp-lift}(ii) implies that $\mathcal{M}(\YT(P))$ is minor equivalent to $\YT(T_3)$.}

Thus, we may assume that $a\neq0$ and $b\neq0$. By Lemma \ref{lem:2-type-4}, we have $a=b$. Moreover, if $P$ contains a type-$3$ column, then the analysis in the proof of Lemma \ref{lem:443} (Case 2) shows that the first two entries of the column must be nonzero. Thus, in Case 1, $P$ is of the form given in Lemma \ref{lem:main-case}. By that lemma, all matroids in $\mathcal{M}(\YT(P))$ are signed-graphic.

In Case 2, Lemma \ref{lem:2-type-4} implies that $P$ contains the following submatrix, where the third column is part of a type-$4$ column of $P$.
\[\begin{bmatrix}
-1&-1&a\\
-1&-1&b\\
 1& 1&c\\
 1& 0&d\\
 0& 1&e\\
\end{bmatrix}\]
If $a=0$ or $b=0$, then Lemma \ref{lem:2-type-4} (using the first and third columns above), implies that $c=d\neq0$, but similarly, Lemma \ref{lem:2-type-4} (using the second and third columns above) implies that $c=e\neq0$. However, no type-$4$ column has three equal nonzero entries. Therefore, $a\neq0$ and $b\neq0$, and Lemma \ref{lem:2-type-4} implies that $a=b$. Thus, every type-$4$ column has equal nonzero entries in the first two rows. Moreover, if $P$ has a type-$3$ column, then the analysis in the proof of Lemma \ref{lem:443} (Case 1) implies that the first two entries of the column are nonzero. Therefore, in Case 2, $P$ is of the form given in Lemma \ref{lem:main-case}. By that lemma, all matroids in $\mathcal{M}(\YT(P))$ are signed-graphic.

In Case 3, Lemma \ref{lem:2-type-4} implies that $P$ contains the following submatrix, where the third column is part of a type-$4$ column of $P$.
\[\begin{bmatrix}
-1&-1&a\\
-1& 1&b\\
 1&-1&c\\
 1& 1&d\\
\end{bmatrix}\]
If $a=0$, then Lemma \ref{lem:2-type-4} (using the first and third columns) implies that $c=d\neq0$, but this causes the second and third columns to violate Lemma \ref{lem:2-type-4}. Therefore, $a\neq0$. Using the same argument, replacing $a$ with $b$, we see that $b\neq0$. By symmetry (which can be seen by scaling the first two columns by $-1$), we see that $c\neq0$ and $d\neq0$ also. By Lemma \ref{lem:2-type-4}, we must have $(a,b,c,d)=(-1,1,1,-1)$. Therefore, the matrix above is $T_2^+$. Moreover, $P$ can contain no other type-$4$ columns. Now, the proof of Lemma \ref{lem:443} shows that $P$ cannot contain both a type-$3$ column and a pair of type-$4$ columns with equal supports. Therefore, we see that $P$, when restricted to its nonzero rows, is $T_2^+$.  Lemma \ref{lem:comp-lift}(ii) implies that $\mathcal{M}(\YT(P))$ is minor equivalent to $\YT(T_2)$.

Therefore, the result holds in all three cases.
\end{proof}

Recall the matrices $T_1$, $T_2$, and $T_3$ given in Definition \ref{def:T-matrices}. Also recall that $\Pi_r$, $\Sigma_r$, and $\Omega_r$ are the the rank-$r$ universal matroids for $\YT(P)$, where $P=T_1$, $P=T_2$, and $P=T_3$, respectively.

\begin{lemma}
\label{lem:YT-analysis}
Let $\Phi$ be a refined ternary frame template with $\AGE\notin\OVM$. One of the following holds:
\begin{enumerate}
\item Every member of $\mathcal{M}(\Phi)$ is a signed-graphic matroid.
\item The simplification of each member of $\OVM$ is a minor of $\Pi_r$ for some $r\geq4$.
\item The simplification of each member of $\OVM$ is a minor of $\Sigma_r$ for some $r\geq3$.
\item The simplification of each member of $\OVM$ is a minor of $\Omega_r$ for some $r\geq5$.
\end{enumerate}
\end{lemma}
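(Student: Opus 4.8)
The plan is to feed $\Phi$ through the structural lemmas of this section in order. Since $\Phi$ is a refined ternary frame template with $\AGE\notin\OVM$, Lemma \ref{lem:Phi2-or-Y} gives that either $\mathcal{M}(\Phi)\subseteq\mathcal{M}(\Phi_2)$ or $\Phi$ is a $Y$-template. In the first case every member of $\mathcal{M}(\Phi)$ is signed-graphic, so conclusion (1) holds; assume then that $\Phi$ is a $Y$-template. Because $\OVM$ is unchanged under replacing $\Phi$ by a minor-equivalent template, the hypothesis $\AGE\notin\OVM$ and conclusions (2)--(4) are insensitive to such a replacement, and so is conclusion (1), since the signed-graphic matroids form a minor-closed class: if $\mathcal{M}(\Phi')$ is all signed-graphic then $\mathcal{M}(\Phi)\subseteq\OVM=\overline{\mathcal{M}(\Phi')}$ is as well. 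I may therefore use Lemma \ref{lem:3or4nonzeros} to replace $\Phi$ by $\YT(P)$ in which every column of $P$ is, after scaling, of type $3$ or type $4$, and then delete from $P$ any column that is a scalar multiple of another: the corresponding rank-$r$ universal matroids differ only in parallel elements, which (by a short argument interchanging and then deleting a parallel pair) changes neither whether every conforming matroid is signed-graphic nor the simplifications of the members of the minor closure. So I may assume in addition that no column of $P$ is a scalar multiple of another.

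Next I would split according to the number $b$ of type-$4$ columns of $P$ and the total number of columns. If $b=0$, then Lemma \ref{lem:type-3-only} together with $\AGE\notin\OVM$ gives that every member of $\mathcal{M}(\YT(P))$ is signed-graphic, which is conclusion (1). If $b\geq1$ and $P$ has at most two columns, I apply Lemma \ref{lem:type-3-only}, \ref{lem:type-3-and-4}, or \ref{lem:2-type-4} according to $b$; each pins $P$ down, up to column scaling, permutation of rows and columns, and deletion of zero rows, to one of a short explicit list. For the single type-$4$ column, for both matrices produced by Lemma \ref{lem:type-3-and-4}, and for two of the three matrices produced by Lemma \ref{lem:2-type-4}, I check that (after moving the type-$3$ columns to the front) the matrix has the block shape required by Lemma \ref{lem:main-case}, so that $\mathcal{M}(\YT(P))$ consists of signed-graphic matroids and conclusion (1) holds. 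For the remaining matrix of Lemma \ref{lem:2-type-4} I check that, up to column scaling and permutation of rows, it is a two-column submatrix of the matrix $T_2^+$ obtained from $T_2$ by appending the row $(-1,-1,-1)$; since the rows of $T_2^+$ sum to $0$, Lemmas \ref{lem:submatrix} and \ref{lem:comp-lift}(ii) give $\overline{\mathcal{M}(\YT(P))}\subseteq\overline{\mathcal{M}(\YT(T_2^+))}=\overline{\mathcal{M}(\YT(T_2))}$, which will yield conclusion (3). If instead $b\geq1$ and $P$ has at least three columns, then exactly one of Lemmas \ref{lem:433}, \ref{lem:443}, \ref{lem:444} applies, according as $b=1$, $b=2$, or $b\geq3$, and each returns either that $\mathcal{M}(\YT(P))$ is signed-graphic (conclusion (1)) or that $\YT(P)$ is minor equivalent to $\YT(P')$ for some submatrix $P'$ of $T_1$, or to $\YT(T_2)$, or to $\YT(T_3)$.

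It then remains to convert an inclusion of the form $\overline{\mathcal{M}(\YT(P))}\subseteq\overline{\mathcal{M}(\YT(T))}$, for $T\in\{T_1,T_2,T_3\}$ --- using Lemma \ref{lem:submatrix} to absorb the ``submatrix of $T_1$'' case --- into the stated conclusions. By Definition \ref{def:T-matrices}, $\Pi_r$, $\Sigma_r$, $\Omega_r$ are the rank-$r$ universal matroids for $\YT(T_1)$, $\YT(T_2)$, $\YT(T_3)$, and by the structure of universal matroids recalled after Definition \ref{def:universal}, the simplification of every member of $\overline{\mathcal{M}(\YT(T))}$ is a minor of a rank-$r$ universal matroid for $\YT(T)$. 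Since a rank-$r'$ universal matroid for $\YT(T)$ with $r'$ below the relevant threshold is a restriction of the threshold one (for instance $\Pi_4$ is a restriction of $\Pi_r$ for every $r\geq4$, obtained by keeping the elements supported in the first four rows), the index $r$ can be taken to be $\geq4$, $\geq3$, $\geq5$ for $T_1,T_2,T_3$ respectively, giving conclusions (2), (3), (4).

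The main obstacle is the middle step: the genuine work is the finite bookkeeping --- confirming that every matrix in the short lists produced by Lemmas \ref{lem:type-3-and-4} and \ref{lem:2-type-4} either has Lemma \ref{lem:main-case}'s block form or embeds in $T_2^+$, and that the three ``big'' lemmas together with the $b=0$ and ``at most two columns'' cases really do exhaust all configurations --- together with the technical checks that deleting redundant columns and passing between $\YT(T)$ and $\YT(T^+)$ leave all four conclusions intact.
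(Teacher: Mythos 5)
Your proposal is correct and follows essentially the same route as the paper: reduce via Lemma \ref{lem:Phi2-or-Y} to a $Y$-template, normalize with Lemmas \ref{lem:comp-lift} and \ref{lem:3or4nonzeros}, discard scalar-multiple columns, run the same case analysis through Lemmas \ref{lem:type-3-only}, \ref{lem:type-3-and-4}, \ref{lem:2-type-4}, \ref{lem:433}, \ref{lem:443}, and \ref{lem:444} (organized by the number of type-$4$ columns rather than by the total number of columns, which is an immaterial reordering), and finish with Lemma \ref{lem:main-case} and the universal-matroid description of $\Pi_r$, $\Sigma_r$, $\Omega_r$. Your explicit justifications that minor equivalence and duplicate-column removal preserve all four conclusions are points the paper treats more briefly, but the substance is identical.
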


\begin{proof}
By Lemma \ref{lem:Phi2-or-Y}, since $\mathcal{M}(\Phi_2)$ is the class of signed-graphic matroids, we may assume that $\Phi$ is a $Y$-template. Therefore, by Lemma \ref{lem:comp-lift}(i), $\Phi$ is minor equivalent to the complete, lifted $Y$-template $\YT(P)$ determined by a matrix $P$ the sum of whose rows is the zero vector. By Lemma \ref{lem:3or4nonzeros}, we may assume that every column of $P$ is either of type $3$ or of type $4$.

Let $\widetilde{P}$ be a column submatrix of $P$ obtained by removing all but one column from each set of columns of $P$ such that every column in the set is a scalar multiple of each of the other columns in the set. Because conditions (2), (3), and (4) deal with the simplifications of members of $\OVM$ and because the class of signed-graphic matroids is closed under both parallel extensions and the taking of minors, it suffices to consider the template $\YT(\widetilde{P})$.

If $\widetilde{P}$ has exactly one column, that column is of either type $3$ or type $4$. Therefore,  Lemma \ref{lem:main-case} implies that $\mathcal{M}(\YT(\widetilde{P}))$ is contained in the class of signed-graphic matroids.

Suppose $\widetilde{P}$ has exactly two columns. If at least one of those columns is of type $3$, then Lemmas \ref{lem:type-3-only} and \ref{lem:type-3-and-4}, combined with Lemma \ref{lem:main-case}, show that $\mathcal{M}(\YT(\widetilde{P}))$ is contained in the class of signed-graphic matroids. If both columns are of type $4$, then Lemma \ref{lem:2-type-4} shows that there are three possibilities for $\widetilde{P}$. By Lemma \ref{lem:main-case}, two of those possibilities result in $\mathcal{M}(\YT(\widetilde{P}))$ being contained in the class of signed-graphic matroids. By Lemma \ref{lem:comp-lift}(ii), the third possibility results in $\YT(\widetilde{P})$ being minor equivalent to the complete, lifted $Y$-template determined by a submatrix of $T_2$.

If $\widetilde{P}$ has three or more columns, then Lemmas \ref{lem:type-3-only}, \ref{lem:433}, \ref{lem:443}, and \ref{lem:444} imply that  either $\mathcal{M}(\YT(\widetilde{P}))$ is contained in the class of signed-graphic matroids or that $\YT(\widetilde{P})$ \textcolor{red}{is} minor equivalent to the complete, lifted $Y$-template determined by a submatrix of $T_1$, $T_2$, or $T_3$.

If $\mathcal{M}(\YT(\widetilde{P}))$ is contained in the class of signed-graphic matroids, then so is $\mathcal{M}(\Phi)$. If $\YT(\widetilde{P})$ is minor equivalent to the complete, lifted $Y$-template determined by a submatrix of $T_1$, $T_2$, or $T_3$, then by definition of $\Pi_r$, $\Sigma_r$, and $\Omega_r$, the simplification of each member of $\mathcal{M}(\YT(\widetilde{P}))$ is a \textcolor{red}{minor} of $\Pi_r$ for some $r\geq4$, or $\Sigma_r$ for $r\geq3$, or $\Omega_r$ for some $r\geq5$. This implies that condition (2), (3), or (4) holds for $\Phi$.
\end{proof}

We are now ready to prove Theorem \ref{thm:dyadic}.

\begin{proof}[Proof of Theorem \ref{thm:dyadic}]
We will prove the statement in Theorem \ref{thm:dyadic} that is dependent on Hypothesis \ref{hyp:connected-template}. The statement that is dependent on Hypothesis \ref{hyp:clique-template} is proved similarly.

Recall that $\AGE$ is an excluded minor for the class of dyadic matroids. Since $\mathrm{AG}(2,3)\backslash e$ is a restriction of $\mathrm{PG}(2,3)$, Hypothesis \ref{hyp:connected-template}, with $\mathbb{F}=\mathrm{GF}(3)$ and $m=3$, implies that there exist $k\in\mathbb{Z}_+$ and frame templates $\Phi_1,\ldots,\Phi_s,\Psi_1,\ldots,\Psi_t$ such that every $k$-connected dyadic matroid $M$ with at least $2k$ elements either is contained in one of $\mathcal{M}(\Phi_1),\ldots,\mathcal{M}(\Phi_s)$ or has a dual $M^*$ contained in one of $\mathcal{M}(\Psi_1),\ldots,\mathcal{M}(\Psi_t)$. Moreover, each of $\mathcal{M}(\Phi_1),\ldots,\mathcal{M}(\Phi_s),\mathcal{M}(\Psi_1),\ldots,\mathcal{M}(\Psi_t)$ is contained in the class of dyadic matroids.

Let $M$ be a $k$-connected dyadic matroid with at least $2k$ elements. We may assume $k\geq2$; therefore, $M$ and $M^*$ are both simple. Suppose neither $M$ nor $M^*$ is signed-graphic. We must show that either $M$ or $M^*$ is a matroid of some rank $r$ that is a restriction of $\Pi_r$, $\Sigma_r$, or $\Omega_r$. We know that there is a template $\Phi$ such that either $M\in\mathcal{M}(\Phi)$, where $\Phi\in\{\Phi_1,\ldots,\Phi_s\}$ or $M^*\in\mathcal{M}(\Phi)$, where $\Phi\in\{\Psi_1,\ldots,\Psi_t\}$. In the former case, let $M'=M$; in the latter case, let $M'=M^*$. By Theorem \ref{thm:Y1spanning}, we may assume that $\Phi$ is refined. By Lemma \ref{lem:YT-analysis}, since $M'$ is not signed-graphic, $M'$ is a minor of $\Pi_r$, $\Sigma_r$, or $\Omega_r$ for some $r$.

Now, we will show that $M'$ is a restriction of $\Pi_r$, $\Sigma_r$, or $\Omega_r$ for some $r$. \textcolor{red}{Arguing inductively, it suffices to show that the following statements hold when $N'$ is the simplification of a single-element contraction of $N\in\{\Pi_r, \Sigma_r, \Omega_r\}$:
\begin{itemize}
\item If $N=\Pi_r$, then either $N'$ is signed-graphic or $N'=\Pi_{r-1}$.
\item If $N=\Sigma_r$, then either $N'$ is signed-graphic or $N'=\Sigma_{r-1}$.
\item If $N=\Omega_r$, then either $N'$ is signed-graphic, $N'=\Pi_{r-1}$, or $N'=\Omega_{r-1}$.
\end{itemize}}
Consider a universal matroid of some rank for the $Y$-template $\YT(P)$. It is the vector matroid of a matrix of the following form.
\begin{center}
\begin{tabular}{|c|c|c|c|c|c|}
\multicolumn{1}{c}{$A$}&\multicolumn{1}{c}{$B$}&\multicolumn{1}{c}{$C$}&\multicolumn{1}{c}{$D$}&\multicolumn{1}{c}{$E$}&\multicolumn{1}{c}{$F$}\\
\hline
$I$&$0$&unit columns&$0$&$D$&$P$\\
\hline
$0$&$I$&negatives of unit columns&$D$&$0$&$0$\\
\hline
\end{tabular}
\end{center}
\textcolor{red}{There are four cases to consider, depending on whether we contract an element of $A$, $B\cup C\cup D$, $E$, or $F$. Let $P\in\{T_1, T_2, T_3\}$.}

\textcolor{red}{If we contract an element of $B\cup C\cup D$ and simplify, we obtain the universal matroid of $\YT(P)$ of rank $r-1$. Therefore, $N$ is $\Pi_r$, $\Sigma_r$, or $\Omega_r$ and $N'$ is $\Pi_{r-1}$, $\Sigma_{r-1}$, or $\Omega_{r-1}$, respectively.}

\textcolor{red}{If we contract an element of $A$ and simplify, then the result is the universal matroid of a complete, lifted $Y$-template determined by a matrix $P'$ obtained from $P$ by deleting one row and possibly removing any graphic or duplicate columns that may result. If $P'$ is obtained from $T_3$ by deleting the last row, then $P' = T_1$, up to column scaling. This implies that $N=\Omega_r$ and $N'=\Pi_{r-1}$. In all other cases, one can use Lemma \ref{lem:comp-lift}(ii) to show that $\YT(P')$ is minor equivalent to $\YT(P'')$ for some matrix $P''$ of the form given in Lemma \ref{lem:main-case}, implying that $N'$ is signed-graphic.}

\textcolor{red}{If we contract an element of $E$ and simplify, we obtain a universal matroid of a complete, lifted $Y$-template determined by a matrix $P' $ obtained by adding one row of $P$ to another, deleting the added row, and possibly removing graphic or duplicate columns that may result. One can use Lemma \ref{lem:comp-lift}(ii) to check that $\YT(P')$ is minor equivalent to $\YT(P'')$ for some matrix $P''$ such that either $P''$ is of the form given in Lemma \ref{lem:main-case} (and therefore $N'$ is signed-graphic) or $P'' = T_1$. We have $P''=T_1$ only when $P=T_3$, so in this case, $N=\Omega_r$ and $N'=\Pi_{r-1}$.}

\textcolor{red}{If $P=T_1$ and we contract the element of $F$ represented by either the second or third column of $T_1$, then we obtain a matroid represented by a matrix with at most two nonzero entries per column. In all other cases, if we contract an element of $F$ and simplify, then we obtain the universal matroid of a complete, lifted $Y$-template determined by some matrix $P'$. Using Lemma \ref{lem:comp-lift}(ii), one can check that $\YT(P')$ is minor equivalent to $\YT(P'')$ for some matrix $P''$ of the form given in Lemma \ref{lem:main-case}. Therefore, $N'$ is signed-graphic.}
\end{proof}

Now we prove Theorem \ref{thm:dyadic-exc-minor}.

\begin{proof}[Proof of Theorem \ref{thm:dyadic-exc-minor}]
We will use Lemma \ref{lem:connected-excluded-minors} to prove Theorem \ref{thm:dyadic-exc-minor}. Let $\mathcal{E}_1=\{\AGE\}$, and let $\mathcal{E}_2=\{(\AGE)^*\}$. Since $\AGE$ is ternary but not dyadic, and since the class of dyadic matroids is closed under duality, conditions (i) and (ii) of Lemma \ref{lem:connected-excluded-minors} are satisfied. Since every signed-graphic matroid is dyadic and since $\Pi_r$, $\Sigma_r$, and $\Omega_r$ are dyadic for every $r$, Lemma \ref{lem:YT-analysis} implies that condition (iii) of Lemma \ref{lem:connected-excluded-minors} is satisfied. Then (iv) follows from (iii) and duality. The result follows.
\end{proof}

\section{Near-Regular and $\sqrt[6]{1}$-Matroids}
\label{sec:6sqrt}

In this section, we prove Theorem \ref{thm:near-reg-sixth-root} after proving several lemmas.

\begin{lemma}
\label{lem:check-min6}
Let $\Phi\in\{\Phi_2,\Phi_X,\Phi_C,\Phi_{Y_0}\,\Phi_{CX},\Phi_{CX2}\}$. Then the non-Fano matroid $F_7^-$ is a minor of some member of $\mathcal{M}(\Phi)$. Therefore,  $\mathcal{M}(\Phi)$ is not contained in the class of $\sqrt[6]{1}$-matroids.
\end{lemma}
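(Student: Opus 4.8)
The plan is to follow the proof of Lemma~\ref{lem:check-min}: for each of the six templates $\Phi$ I will exhibit a matroid in $\mathcal{M}(\Phi)$ having $F_7^-$ as a minor. Granting this, the final sentence of the statement is immediate, because $F_7^-$ is an excluded minor for the class of $\sqrt[6]{1}$-matroids \cite{ggk00} (recalled in Section~\ref{sec:Preliminaries}) and that class is minor-closed; hence no matroid having an $F_7^-$-minor is a $\sqrt[6]{1}$-matroid, so $\mathcal{M}(\Phi)$ is not contained in that class.

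As in Lemma~\ref{lem:check-min}, it suffices to treat $\Phi_2$, $\Phi_{Y_0}$ and $\Phi_X$ directly; Lemma~\ref{lem:YCX} then disposes of the other three, since $\overline{\mathcal{M}(\Phi_{Y_0})}\subseteq\overline{\mathcal{M}(\Phi_C)}$ and $\overline{\mathcal{M}(\Phi_X)}\subseteq\overline{\mathcal{M}(\Phi_{CX})}\cap\overline{\mathcal{M}(\Phi_{CX2})}$. I would first verify, by a short count of three-point lines (or by a SageMath check as in the proof of Lemma~\ref{lem:forbidden}), that
\[
F_7^-\cong M\big([I_3|D_3|(1,1,-1)^{T}]\big),
\]
and this single identification handles both $\Phi_2$ and $\Phi_{Y_0}$. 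On one hand, the matroid on the right is the restriction of the rank-$3$ ternary Dowling geometry $Q_3(\GF(3)^{\times})=M([I_3|D_3|T_2])$ obtained by deleting two of its three joints, so $F_7^-$ is signed-graphic; since $\mathcal{M}(\Phi_2)$ is the class of signed-graphic matroids, $F_7^-\in\mathcal{M}(\Phi_2)$. On the other hand, because $Y_1=\emptyset$ a matrix conforming to $\Phi_{Y_0}$ is just a $\{1\}$-frame matrix with one adjoined column of arbitrary entries from $\GF(3)$ (the constraint from $\Delta\cong\mathbb{Z}/3\mathbb{Z}$ being vacuous when $|Y_0|=1$); taking $[I_3|D_3]$, which represents $M(K_4)$, as the frame part and $(1,1,-1)^{T}$ as the $Y_0$-column shows $F_7^-\in\mathcal{M}(\Phi_{Y_0})$.

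For $\Phi_X$, a conforming matrix is a $\{1\}$-frame matrix with one adjoined row of arbitrary entries from $\GF(3)$ on top, so I need a rank-$3$ representation of $F_7^-$ whose bottom two rows form a $\{1\}$-frame matrix. I would obtain this from the fact that $F_7^-$ is the union of three concurrent three-point lines: pick a point $x$ lying on three of the six three-point lines of $F_7^-$ (there are four such points), so the three lines through $x$ cover all seven points; then choose a basis with $x=(1,0,0)$ and the three lines equal to $\langle(1,0,0),(0,1,0)\rangle$, $\langle(1,0,0),(0,0,1)\rangle$ and $\langle(1,0,0),(0,1,-1)\rangle$, and rescale columns to reach, for instance, the conforming matrix
\[
\begin{pmatrix}
1&0&1&0&1&0&1\\
0&1&1&0&0&1&1\\
0&0&0&1&1&-1&-1
\end{pmatrix},
\]
whose bottom two rows are $\{1\}$-frame and whose top row lies in $\Lambda$; a final line-count confirms that it represents $F_7^-$. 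The only step requiring real thought is this last one --- producing a labelling and change of basis of $F_7^-$ under which two of the coordinate rows become $\{1\}$-frame; once the ``three concurrent lines'' structure of $F_7^-$ is recognized, the representation essentially writes itself and the remaining checks are routine or mechanical.
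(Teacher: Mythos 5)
Your proposal is correct and follows essentially the same route as the paper: $F_7^-$ is handled for $\Phi_2$ via the rank-$3$ ternary Dowling geometry minus two joints, explicit conforming rank-$3$ representations are exhibited for $\Phi_X$ and $\Phi_{Y_0}$ (the paper uses a single matrix serving both roles, and your $\Phi_{Y_0}$ matrix is column-scaling equivalent to it), and Lemma \ref{lem:YCX} disposes of $\Phi_C$, $\Phi_{CX}$, $\Phi_{CX2}$. The only cosmetic difference is that you justify the final sentence via the excluded-minor list for $\sqrt[6]{1}$-matroids rather than non-representability of $F_7^-$ over $\GF(4)$; both are valid.
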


\begin{proof}
It is well known (see \cite[Proposition 6.4.8]{o11}) that $F_7^-$ is $\F$-representable if and only if the characteristic of $\F$ is not $2$. Thus, $F_7^-$ is not representable over $\GF(4)$ and is therefore not a $\sqrt[6]{1}$-matroid. Therefore, the last statement of the lemma follows from the first part of the lemma.

We saw in Section \ref{sec:dyadic} that $M(K_4)$ can be obtained from the rank-$3$ ternary Dowling geometry by deleting the three joints. If we leave one of the joints in place, then it is contained in the closures of exactly two pairs of points that are not contained in a nontrivial line of $M(K_4)$. The result is the non-Fano matroid $F_7^-$. Therefore, $F_7^-$ is signed-graphic, implying that $F_7^-\in\mathcal{M}(\Phi_2)$.

The following ternary matrix is a representation of $F_7^-$ that conforms to both $\Phi_X$ and $\Phi_{Y_0}$.
\begin{equation}
\label{equ:F7-}
\left[\begin{array}{rrrrrrr}
 1&0&0&-1&-1&0 &1\\
0&1&0&1  &  0&1 &1\\
0&0&1&0  &  1&-1&-1\\
\end{array}\right]
\end{equation}
Therefore, $F_7^-\in\mathcal{M}(\Phi_X)$ and $F_7^-\in\mathcal{M}(\Phi_{Y_0})$. By Lemma \ref{lem:YCX}, we also have that $F_7^-$ is a minor of some member of $\mathcal{M}(\Phi_C)$, some member of $\mathcal{M}(\Phi_{CX})$, and some member of $\mathcal{M}(\Phi_{CX2})$.
\end{proof}

Recall that the matroid $T_r^1$ is obtained from the complete graphic matroid $M(K_{r+2})$ by adding a point freely to a triangle, contracting that point, and simplifying.  For $r\geq2$, Semple (see \cite[Section 2]{s99} and \cite[Proposition 3.1]{s96}) showed that $T_r^1$ is representable over a field $\F$ if and only if $\F\neq\GF(2)$. Therefore, $T_r^1$ is near-regular for every $r\geq2$. The following matrix represents $T_r^1$ over every field of characteristic other than $2$.

\begin{equation}
\label{equ:Tr1}
\left[
\begin{array}{c|c|c}
\multirow{2}{*}{$I_r$}&\multirow{2}{*}{$D_r$}&1\cdots1\\
\cline{3-3}
&&I_{r-1}\\
\end{array}
\right]
\end{equation}

\begin{lemma}
\label{lem:Y-template-nonFano}
Let $\YT(P)$ be the complete, lifted $Y$-template determined by some ternary matrix $P$. Either $F_7^-$ is a minor of some matroid conforming to $\YT(P)$, or the simplification of each member of $\mathcal{M}(\YT(P))$ is a restriction of $T_r^1$ for some $r\geq2$.
\end{lemma}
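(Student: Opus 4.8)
The plan is to mimic the strategy used for the dyadic case, replacing the matrices $T_1, T_2, T_3$ and the obstruction $\AGE$ with the single target matroid $T_r^1$ and the obstruction $F_7^-$. First I would reduce to the essential case. By Lemma \ref{lem:comp-lift}(i) we may assume the sum of the rows of $P$ is the zero vector, and by Lemma \ref{lem:no-graphic} we may assume $P$ has no graphic columns, so every column of $P$ has at least three nonzero entries. The key technical step is to build a short table of ``forbidden submatrices'': small ternary matrices $P'$ such that $M([I|D|P'])$ has an $F_7^-$-minor, verified by a SageMath computation exactly as in Lemma \ref{lem:forbidden} (contract a listed set $S$, simplify, test for an $F_7^-$-minor). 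Note that the analogue of Table \ref{tab:forbidden} will be much shorter here, because we only need to force structure toward $T_r^1$, whose $P$-part is a single column $[1,1,\dots,1]^T$ (see Matrix (\ref{equ:Tr1})).

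The heart of the argument is then a case analysis on the columns of $P$ (after deleting scalar-multiple duplicates, since that only affects parallel classes and hence not the simplification). As in Lemma \ref{lem:3or4nonzeros}, the matrix $A = [1,1,1,1]^T$ should be among the forbidden submatrices, so every column of $P$, after scaling, is of type $3$ or type $4$ in the sense of Definition \ref{def:type-3-or-4}. I then expect that essentially every configuration of two or more such columns either produces a forbidden submatrix (hence an $F_7^-$-minor) or, via Lemma \ref{lem:comp-lift}(ii), reduces the number of rows until $P$ collapses to a form covered by an analogue of Lemma \ref{lem:main-case}. The surviving case is the one where $P$ consists (up to scaling, permuting, and deleting zero rows) of a single all-ones column $[1,\dots,1]^T$: the complete, lifted $Y$-template determined by this $P$ has as universal matroids exactly the matrices of the form (\ref{equ:Tr1}), so its members simplify to restrictions of $T_r^1$. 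Combining: either some forbidden submatrix appears — giving $F_7^- \in \OVM$ by Lemma \ref{lem:submatrix} — or $\YT(P)$ is minor equivalent to $\YT([1,\dots,1]^T)$ (or a signed-graphic template, which is subsumed since $F_7^-$ is itself a restriction of $T_2^1 = F_7^-$, equivalently of $T_r^1$).

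The main obstacle I anticipate is assembling a \emph{complete} and \emph{correct} list of forbidden submatrices. Unlike the dyadic case, where the three distinguished matrices $T_1, T_2, T_3$ survive, here I must show that every type-$3$/type-$4$ configuration other than a single all-ones column either is forbidden outright or reduces (by row-sum manipulations through Lemma \ref{lem:comp-lift}(ii), and by Lemma \ref{lem:main-case}'s signed-graphic conclusion) to the all-ones column or to something signed-graphic. A subtlety is that a ``signed-graphic'' conclusion is not enough on its own — I should check that signed-graphic matroids of sufficiently high connectivity are themselves restrictions of $T_r^1$, or else handle them via the already-established dyadic structure; in fact every signed-graphic matroid conforming to a $Y$-template collapses (via Lemma \ref{lem:comp-lift}(ii)) to a graphic matroid, which is a restriction of $M(K_{r+1})$ and hence of $T_r^1$. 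So the genuine work is the finite case-check: enumerating pairs and triples of type-$3$/type-$4$ columns, ruling most out by an explicit SageMath $F_7^-$-minor test, and showing the rest reduce. I would expect this to parallel Lemmas \ref{lem:type-3-only}--\ref{lem:444} closely but with fewer surviving cases.
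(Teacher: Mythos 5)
Your overall architecture---reduce via Lemma \ref{lem:no-graphic} and Lemma \ref{lem:comp-lift}(i), forbid a short list of small submatrices by computing $F_7^-$-minors in $M([I|D|P'])$, then collapse the survivors with Lemma \ref{lem:comp-lift}(ii)---is exactly the paper's strategy, and your opening reductions are correct. (One simplification you miss: since $[I_3|D_3|[1,1,-1]^T]$ already represents $F_7^-$, any column with two unequal nonzero entries plus anything else is killed immediately, so no type-$4$ columns survive at all and the type-$3$/type-$4$ case analysis of Lemmas \ref{lem:433}--\ref{lem:444} has no analogue here; only type-$3$ columns remain.)

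The endgame, however, has a genuine gap. The surviving case is \emph{not} a single all-ones column: a column $[1,1,1,1]^T$ is itself forbidden (contracting that element of $M([I_4|D_4|P])$ yields $F_7^-$), and the universal matroids of $\YT([1,1,1]^T)$ are only $M(K_{r+1})$ plus one point, far short of $T_r^1$. The $P$-part of the representation (\ref{equ:Tr1}) of $T_r^1$ is $\bigl[\begin{smallmatrix}1\cdots1\\ \hline I_{r-1}\end{smallmatrix}\bigr]$, with $r-1$ columns of weight two. What actually survives the forbidden-submatrix test is a matrix of type-$3$ columns all sharing two common rows of $1$s, which one application of Lemma \ref{lem:comp-lift}(ii) turns into exactly that shape; under your plan these configurations are neither forbidden (they have no $F_7^-$-minor, so the SageMath test returns negative) nor recognized as the target. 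Your fallback for them is also wrong on two counts: a signed-graphic matroid conforming to a $Y$-template need not collapse to a graphic one ($T_r^1$ itself is signed-graphic and non-graphic, being represented by columns with two equal nonzero entries), and the parenthetical claim that $F_7^-$ is a restriction of $T_2^1$ is false---$T_2^1\cong U_{2,4}$ has rank $2$, and since $F_7^-$ is not a $\sqrt[6]{1}$-matroid while every $T_r^1$ is near-regular, $F_7^-$ cannot be a minor of any $T_r^1$; were your claim true, the lemma's dichotomy would be vacuous. So the proof does not close as written: you must identify the shared-two-rows form as the surviving case and verify directly that, after deleting one of the two common rows, its universal matroids are precisely restrictions of $T_r^1$.
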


\begin{proof}
Suppose that $F_7^-$ is not a minor of any matroid conforming to $\YT(P)$. We will show that the simplification of each member of $\mathcal{M}(\YT(P))$ is a restriction of $T_r^1$ for some $r\geq2$.

By Lemma \ref{lem:no-graphic}, we may assume that $P$ has no graphic columns. Suppose $P=[1,1,1,1]^T$. Consider the vector matroid of $[I_4|D_4|P]$. By contracting the element represented by $P$, we obtain a matroid containing $F_7^-$ as a restriction. Therefore, no column of $P$ can contain four equal nonzero entries. If $P=[1,1,-1]^T$, then $[I_3|D_3|P]$ is (up to column scaling) the representation of $F_7^-$ given in Matrix (\ref{equ:F7-}). Therefore, if a column of $P$ contains unequal nonzero entries, then it can contain no other nonzero entry. Thus, the column is a graphic column, which contradicts our assumption above.

Thus, every column of $P$ contains at most three nonzero entries, all of which are equal. Consider the following matrices.
\begin{center}
$\begin{bmatrix}
1&0\\
1&0\\
0&1\\
0&1\\
\end{bmatrix}$,
$\begin{bmatrix}
1&1&0\\
1&0&1\\
0&1&1\\
\end{bmatrix}$
\end{center}
If $P$ is either of these matrices, then $M([I|D|P])$ contains $F_7^-$ as a minor. (This can be easily checked with SageMath or by hand. If $P$ is the first matrix, contracting one of the elements represented by a column of $P$ and simplifying results in the rank-$3$ ternary Dowling geometry. We saw in the proof of Lemma  \ref{lem:check-min6} that $F_7^-$ is signed-graphic, implying that it is a restriction of the rank-$3$ ternary Dowling geometry. If $P$ is the second matrix, then $M([I|D|P])$ is itself the rank-$3$ ternary Dowling geometry.) Therefore, $P$ does not contain either of these matrices as a submatrix.

It is routine to check that the class of matroids whose simplifications are restrictions of some $T_r^1$ is minor-closed. Therefore, it suffices to consider a template that is minor equivalent to $\YT(P)$. Thus, by Lemma \ref{lem:comp-lift}(i), we may assume that the sum of the rows of $P$ is the zero vector. Therefore, we may assume that every column has exactly three nonzero entries all of which are equal. Because the two matrices above are forbidden, $P$ is of the following form, where $V$ consists entirely of unit columns.
\begin{center}
$\begin{bmatrix}
1&\cdots&1\\
1&\cdots&1\\
\hline
&V&\\
\end{bmatrix}$
\end{center}
Now, Lemma \ref{lem:comp-lift}(ii) implies that $\YT(P)$ is minor equivalent to the complete, lifted $Y$-template $\YT(P')$ determined by the matrix $P'$ obtained by removing the top row from $P$. The simplification of each member of $\mathcal{M}(\YT(P'))$ is a restriction of $T_r^1$ for some $r\geq2$.
\end{proof}

We are now ready to prove Theorem \ref{thm:near-reg-sixth-root}.

\begin{proof}[Proof of Theorem \ref{thm:near-reg-sixth-root}]
We prove the statement in Theorem \ref{thm:near-reg-sixth-root} that is dependent on Hypothesis \ref{hyp:connected-template}. The statement dependent on Hypothesis \ref{hyp:clique-template} is proved similarly.

Every near-regular matroid is also a $\sqrt[6]{1}$-matroid; therefore, (1) implies (2). We will now show that (2) implies (3). By Hypothesis \ref{hyp:connected-template}, there exist ternary frame templates $\Phi_1,\ldots,\Phi_s,\Psi_1,\ldots,\Psi_t$ and a positive integer $k_1$ such that every matroid conforming to these templates is a $\sqrt[6]{1}$-matroid and such that every simple $k_1$-connected $\sqrt[6]{1}$-matroid with at least $2k_1$ elements either conforms or coconforms to one of these templates. By Theorem \ref{thm:Y1spanning}, we may assume that these templates are refined. Lemmas \ref{lem:check-min6} and \ref{lem:reduce-to-Y-template} imply that each of these templates is a $Y$-template. Then Lemmas \ref{lem:comp-lift} and \ref{lem:Y-template-nonFano} imply that the simplification of every matroid conforming to these templates is isomorphic to a restriction of $T_r^1$ for some $r\geq2$ (because the class of matroids whose simplifications are restrictions of $T_r^1$ is minor-closed). We see that we may choose $r=r(M)$ because otherwise there are rows of the matrix representing $M$ that can be removed without changing the matroid. By taking $k\geq k_1$, we see that (2) implies (3).

Since $T_r^1$ is near-regular for every $r$, (3) implies (1). We complete the proof of the theorem by showing the equivalence of (2) and (4) using Lemma \ref{lem:connected-excluded-minors}. In that lemma, let $\mathcal{M}$ be the class of $\sqrt[6]{1}$-matroids, let $\mathcal{E}_1=\{F_7^-\}$, and let $\mathcal{E}_2=\{(F_7^-)^*\}$. Since $F_7^-$ and $(F_7^-)^*$ are ternary matroids that are not $\sqrt[6]{1}$-matroids, conditions (i) and (ii) of Lemma \ref{lem:connected-excluded-minors} are satisfied. Combining Lemmas \ref{lem:check-min6} and \ref{lem:reduce-to-Y-template}, we see that for every refined frame template $\Phi$ that is not a $Y$-template, $F_7^-\in\OVM$. Combining Lemmas \ref{lem:comp-lift}(i), and \ref{lem:Y-template-nonFano}, we see that $F_7^-\in\OVM$ for every $Y$-template $\Phi$ such that $\OVM$ is not contained in the class of matroids whose simplifications are restrictions of some $T_r^1$. Since $T_r^1$ is a $\sqrt[6]{1}$-matroid for every $r$, condition (iii) of Lemma \ref{lem:connected-excluded-minors} holds. Finally, condition (iv) of Lemma \ref{lem:connected-excluded-minors} holds because of condition (iii) and the fact that the class of $\sqrt[6]{1}$-matroids is closed under duality. Therefore, by Lemma \ref{lem:connected-excluded-minors}, there is a positive integer $k_2$ such that a $k_2$-connected ternary matroid with at least $2k_2$ elements is a $\sqrt[6]{1}$-matroid if and only if it contains no minor isomorphic to either $F_7^-$ or $(F_7^-)^*$. By taking $k\geq k_2$, we see that (2) is equivalent to (4), completing the proof.
\end{proof}

\renewcommand{\thesection}{Appendix A}
\section{SageMath Code}
\label{appendix}
\setcounter{theorem}{0}

We give here an example of the code for the computations used to prove Lemma \ref{lem:forbidden}. The function \texttt{complete\_Y\_template\_matrix} takes as input a matrix $P$ and returns the matrix $[I|D|P]$. The code below returns \texttt{True}, showing that if $P$ is the matrix $C$ from Table \ref{tab:forbidden}, then $M([I|D|P])/\{0,18,23\}$ contains $\mathrm{AG}(2,3)\backslash e$ as a minor.

\begin{verbatim}
N=matroids.named_matroids.AG23minus()
# N is the matroid AG(2,3)\e

def complete_Y_template_matrix(P):
    k=P.nrows()
    num_elts=k+k*(k-1)/2+P.ncols()
    F=P.base_ring()
    A = Matrix(F, k, num_elts)
    # identity in front
    for j in range(k):
        A[j,j] = 1
    i = k
    # all pairs
    for S in Subsets(range(k),2):
        A[S[0],i]=1
        A[S[1],i]=-1
        i = i + 1
    # Columns from P
    for l in range(P.ncols()):
        for j in range(k):
            A[j, i] = P[j, l]
        i=i+1
    return A
    
P =  Matrix(GF(3), [[1,1,0],
                    [1,0,1],
                    [0,1,1],
                    [1,0,0],
                    [0,1,0],
                    [0,0,1]])
A=complete_Y_template_matrix(P)
M=Matroid(field=GF(3), matrix=A)
((M/0/18/23).simplify()).has_minor(N)
\end{verbatim}

\section*{Acknowledgements}
We thank the anonymous referees for carefully reading the manuscript and giving many suggestions to improve it. In particular, they pointed out a significant error in a previous version.


\begin{thebibliography}{99}

\bibitem{bl76} Thomas H. Brylawski and T.D. Lucas, Uniquely representable combinatorial geometries, in {\em Teorie combinatorie} (Proc. 1973 Internat. Colloq.), Academia Nazionale dei Lincei (1976), Rome pp. 83--104.

\bibitem{b75} Tom Brylawski, Modular constructions for combinatorial geometries, {\em Transactions of the American Mathematical Society} {\bf 201} (1975), 1--44.

\bibitem{ggk00} J.F. Geelen, A.M.H. Gerards, and A. Kapoor, The excluded minors for $\GF(4)$-representable matroids, {\em Journal of Combinatorial Theory, Series B} {\bf 79} (2000), 247--299.

 \bibitem{ggw06} Jim Geelen, Bert Gerards, and Geoff Whittle, Towards a structure theory for matrices and matroids. In {\em International Congress of Mathematicians Vol. III}, pages 827--842, Eur. Math. Soc., Z\"urich, 2006.
 
\bibitem{ggw15} Jim Geelen, Bert Gerards, and Geoff Whittle, The highly connected matroids in minor-closed classes, {\em Annals of Combinatorics} {\bf 19} (2015), 107--123.

\bibitem{g90} A.M.H. Gerards, {\em Graphs and Polyhedra. Binary Spaces and Cutting Planes}, CWI Tract {\bf 73}, Stichting Mathematisch Centrum Centrum voor Wiskunde en Informatica, 1990.

\bibitem{g-submitted} Kevin Grace, The templates for some classes of quaternary matroids, to appear in {\em Journal of Combinatorial Theory, Series B}, arXiv:1902.07136v4 [math.CO].

\bibitem{gvz17} Kevin Grace and Stefan H.M. van Zwam, Templates for binary matroids, {\em SIAM Journal on Discrete Mathematics} {\bf 31} (2017), 254--282.

\bibitem{gvz18} Kevin Grace and Stefan H.M. van Zwam, On perturbations of highly connected dyadic matroids, {\em Annals of Combinatorics}  {\bf 22} (2018), 513--542.

\bibitem{hmvz18} Rhiannon Hall, Dillon Mayhew, and Stefan H.M. van Zwam, The excluded minors for near-regular matroids, {\em European Journal of Combinatorics}  {\bf 32} (2011), 802--830.

\bibitem{k90} Joseph P.S. Kung, Combinatorial geometries representable over $\GF(3)$ and $\GF(q)$. I. The number of points, {\em Discrete and Computational Geometry} {\bf 5} (1990), 83--95.

\bibitem{ko88} Joseph P.S. Kung and James G. Oxley, Combinatorial geometries representable over $\GF(3)$ and $\GF(q)$. II. Dowling geometries, {\em Graphs and Combinatorics} {\bf 4} (1988), 323--332.

\bibitem{mwvz11} Dillon Mayhew, Geoff Whittle, and Stefan H.M. van Zwam, An obstacle to a decomposition theorem for near-regular matroids, {\em SIAM Journal on Discrete Mathematics} {\bf 25} (2011), 271--279.
 
\bibitem{nw17} Peter Nelson and Zachary Walsh, The extremal function for geometry minors of matroids over prime fields, arXiv:1703.03755v1 [math.CO].
 
\bibitem{o11} James Oxley, {\em Matroid Theory}, Second Edition, Oxford University Press, New York, 2011.

\bibitem{ovw98} James Oxley, Dirk Vertigan, and Geoff Whittle, On maximum-sized near-regular and $\sqrt[6]{1}$-matroids, {\em Graphs and Combinatorics} {\bf 14} (1998), 163--179.

\bibitem{sage-matroid} Rudi Pendavingh, Stefan van Zwam, et al. {\em Sage Matroids Component, included in Sage Mathematics Software 5.12}. The Sage Development Team, 2013. http://www.sagemath.org/doc/reference/matroids/index.html.

\bibitem{sage} SageMath, the Sage Mathematics Software System (Version 8.9), The Sage Developers, 2019, http://www.sagemath.org.
 
\bibitem{s96} Charles Semple, $k$-regular matroids, in {\em Combinatorics, Complexity and Logic} (eds. D.S. Bridges et al.), Discrete Mathematics and Theoretical Computer Science Series, Springer-Verlag (1996), Singapore pp. 376--386.

\bibitem{s99} Charles Semple, On maximum-sized $k$-regular matroids, {\em Graphs and Combinatorics} {\bf 15} (1999), 441--462.

\bibitem{sq07} Daniel C. Slilaty and Hongxun Qin, Decompositions of signed-graphic matroids, {\em Discrete Mathematics} {\bf 307} (2007), 2187--2199.

\bibitem{w97} Geoff Whittle, On matroids representable over $\GF(3)$ and other fields, {\em Transactions of the American Mathematical Society} {\bf 349} (1997), 579--603.
 
\bibitem{z82} Thomas Zaslavsky, Signed graphs, {\em Discrete Applied Mathematics} {\bf 4} (1982), 47--74.

\bibitem{z90} Thomas Zaslavsky, Biased graphs whose matroids are special binary matroids, {\em Graphs and Combinatorics} {\bf 6} (1990), 77--93.
\end{thebibliography}
\end{document}